\newtheorem{lem}{Lemma}[section]
\newtheorem{theo}[lem]{Theorem}
\newtheorem{coro}[lem]{Corollary}
\newtheorem{propo}[lem]{Proposition}
\newtheorem{rema}[lem]{Remark}
\newtheorem{defi}[lem]{Definition}
\newenvironment{lemma}
{\begin{lem}\sl } {\end{lem}}
\newenvironment{theorem}
{\begin{theo}\sl } {\end{theo}}
\newenvironment{corollary}
{\begin{coro}\sl } {\end{coro}}
\newenvironment{proposition}
{\begin{propo}\sl } {\end{propo}}
\newenvironment{remark}
{\begin{rema}\rm } {\end{rema}}
\newenvironment{proof}{\paragraph*{Proof}}
{\par}
\newcommand\qed{\hfill$\square$}
\newcommand\CC{{\mathcal C}}
\newcommand\OO{{\mathcal O}}
\newcommand\calF{{\mathcal F}}
\newcommand\calI{{\mathcal I}}
\newcommand\gal{{\mathrm{Gal}}}
\newcommand\ord{{\mathrm{ord}}}
\newcommand\GL{{\mathrm{GL}}}
\newcommand\SL{{\mathrm{SL}}}
\newcommand\eps\varepsilon
\newcommand\ph\varphi
\newcommand\C{{\mathbb C}}
\newcommand\R{{\mathbb R}}
\newcommand\F{{\mathbb F}}
\newcommand\Q{{\mathbb Q}}
\newcommand\PPP{{\mathbb P}}
\newcommand\Z{{\mathbb Z}}
\newcommand\height{{\mathrm h}}
\newcommand\im{{\mathrm {Im}\,}}
\newcommand\sign{{\mathrm {sign}\,}}
\newcommand\HH{{\mathcal H}}
\newcommand\bfa{{\mathbf a}}
\newcommand\bfb{{\mathbf b}}
\newcommand\bfA{{\mathbf A}}
\newcommand\tilD{{\widetilde D}}
\newcommand\topbot[2]{{\genfrac{}{}{0pt}{}{{#1}}{{#2}}}}
\newcommand\tors{{\mathrm{tors}}}
\newcommand\cl{{\mathrm{cl}}}
\newcommand\tilJ{{\widetilde J}}  
\newcommand\tilg{{\tilde g}}
\newcommand\splic{{\mathrm{sp.C}}}
\title{Runge's Method and Modular Curves}
\author{Yuri Bilu, Pierre Parent (Universit\'e de Bordeaux~I)}
\begin{document}

\maketitle

\begin{abstract}
We bound the $j$-invariant of $S$-integral points on arbitrary modular curves over arbitrary
fields, in terms of the congruence group defining the curve, assuming a certain {\it Runge 
condition} is satisfied by our objects. We then apply our bounds to prove that for sufficiently 
large prime $p$, the points of $X_0^+ (p^r )(\Q )$ with $r>1$ are either cusps or CM points. 
This can be interpreted as the non-existence of quadratic elliptic $\Q$-curves with higher 
prime-power degree.
\medskip

AMS 2000 Mathematics Subject Classification  11G18 (primary), 11G05, 11G16 
(secondary). 
\end{abstract}

{\footnotesize

\tableofcontents

}

\section{Introduction}
\addtocontents{toc}{\vspace{-0.7\baselineskip}}

Let ${N\ge 2}$ be an integer and $X(N)$ the principal modular curve of level~$N$. Further, 
let~$G$ a subgroup of $\GL_{2} (\Z /N\Z )$ containing $-1$  and let $X_G$ be the corresponding modular 
curve. This curve is defined over $\Q  (\zeta_{N})^{\det (G)}$, so in particular it is 
defined over $\Q$ if ${\det (G)=(\Z /N\Z )^\times}$. (Through all this paper, we say that 
an algebraic curve is  \textsl{defined} over a field if it has a geometrically integral 
model 
over this basis.) As usual, we denote by~$Y_G$ the finite part of~$X_G$ (that is,~$X_G$ 
deprived  of the cusps). If~$X_G$ is defined over a number field~$K$, the curve~$X_{G}$
has a natural (modular) model over ${\OO=\OO_K}$ that we still denote by~$X_{G}$. The 
cusps define a closed subscheme of~$X_{G}$ over~$\OO$, and we also call~$Y_G$ the 
relative curve over~$\OO$ which is~$X_{G}$ deprived of the cusps. If~$S$ is a finite set of 
places of~$K$ containing the infinite places, the set of $S$-integral points $Y_G
(\OO_S)$ consists of those ${P\in Y_G(K)}$ for which ${j(P)\in \OO_S}$, where~$j$ is, as 
usual, the modular invariant and ${\OO_S=\OO_{K,S}}$ is the ring of~$S$-integers. 

In its simplest form, the first principal result of this article gives an explicit upper bound for
${j(P)\in \Z}$ under certain Galois condition on the cusps. 

\begin{theorem}
\label{th1}
Assume that~$X_G$ is defined over~$\Q$, and assume that the absolute Galois group 
$\gal(\bar\Q/\Q)$ acts non-transitively on the cusps of~$X_G$. Then for any ${P\in Y_G
(\Z)}$ we have
\begin{equation}
\label{eth1}
\log|j(P)|\le 12|G|N^2\log 3N. 
\end{equation}
\end{theorem}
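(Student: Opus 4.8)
The plan is to run Runge's method, taking the modular invariant $j$ itself as the coordinate function; its poles on $X_G$ are exactly the cusps. Since $P\in Y_G(\Z)$, the value $j(P)$ is a \emph{rational} integer, so at the single archimedean place we are in the easiest possible Runge situation: either $|j(P)|$ is below an absolute constant, in which case \eqref{eth1} is trivial, or $P$ lies, in the analytic topology, inside a small neighbourhood of a single cusp $c_0$ of $X_G(\C)$ — the one for which $|q_{c_0}(P)|$ is smallest. The hypothesis that $\gal(\bar\Q/\Q)$ is non-transitive on the cuspidal locus $\mathfrak C$ is what prevents this from degenerating: it lets us fix a partition $\mathfrak C=\mathfrak C_1\sqcup\mathfrak C_2$ into two non-empty $\gal(\bar\Q/\Q)$-stable subsets (one may take $\mathfrak C_1$ to be a single Galois orbit), and I will use it to build an auxiliary function that detects $c_0$ no matter which cusp it turns out to be.

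\textbf{The auxiliary modular unit.} First I would consider the divisor $D=|\mathfrak C_2|\sum_{c\in\mathfrak C_1}(c)-|\mathfrak C_1|\sum_{c\in\mathfrak C_2}(c)$: it has degree $0$, is supported on $\mathfrak C$, and is defined over $\Q$. By the Manin--Drinfeld theorem its class is torsion in the Jacobian of $X_G$, so $\mathrm{div}(u)=mD$ for some integer $m\ge1$ and some $u\in\bar\Q(X_G)^\times$; since $D$ is $\gal$-stable, Hilbert~90 lets us normalise $u$ to lie in $\Q(X_G)^\times$. The point of this $D$ is that $\ord_c(u)\ne0$ at \emph{every} cusp $c$ (in particular at $c_0$), while $\mathrm{div}(u)$ stays supported on $\mathfrak C$, so that $u$ is invertible on $Y_G$ — a modular unit. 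For an effective statement I would not invoke Manin--Drinfeld abstractly but instead exhibit $u$ as a monomial in Siegel units on $X(N)$ with the prescribed cuspidal divisor, whose leading coefficients, $q$-expansions and fields of definition are explicit; this is precisely where the parameters $N$ and $|G|$ enter (through the number of cusps of $X_G$, their widths, and the degree of $X(N)\to X_G$).

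\textbf{The two-sided estimate.} Because $u$ is a modular unit, $u(P)$ is a unit of $\OO_{\Q,S'}=\Z[1/6N]$, so $\log|u(P)|=-\sum_{p\mid 6N}\ord_p(u(P))\log p$, and each exponent on the right is bounded by the multiplicity of $\mathrm{div}(u)$ along the corresponding bad fibre of the integral model of $X_G$; this makes $\bigl|\log|u(P)|\bigr|$ explicit in $N$ and $\deg\mathrm{div}(u)$. On the other hand, in the local parameter $q_{c_0}$ at $c_0$, whose width $e_{c_0}$ divides $N$, the standard $q$-expansions give
\[
\log|j(P)|=-e_{c_0}\log|q_{c_0}(P)|+O(1),\qquad \log|u(P)|=\ord_{c_0}(u)\log|q_{c_0}(P)|+O(1),
\]
valid once $|j(P)|$ exceeds an explicit threshold (the complementary range being disposed of directly), with both error terms explicit from the relevant $q$-expansions. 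Eliminating $\log|q_{c_0}(P)|$ yields
\[
\log|j(P)|\le\frac{e_{c_0}}{|\ord_{c_0}(u)|}\Bigl(\bigl|\log|u(P)|\bigr|+O(1)\Bigr),
\]
and inserting $e_{c_0}\le N$, $|\ord_{c_0}(u)|\ge1$, and the explicit bounds for $\bigl|\log|u(P)|\bigr|$ and $\deg\mathrm{div}(u)$ produces a bound of the shape in \eqref{eth1}.

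\textbf{Where the difficulty lies.} The Runge mechanism itself is short; essentially all the work is effectivity. One must (a) control the order $m$ of the cuspidal divisor class — or, better, identify which cuspidal divisors are Siegel-unit divisors and build $u$ explicitly; (b) bound the leading coefficient and the tail of the $q$-expansion of $u$ at each cusp, to pin down the $O(1)$'s; and (c) bound $\ord_p(u(P))$ for $p\mid 6N$, i.e. understand $\mathrm{div}(u)$ on the bad fibres of the integral model. Each of these is where $N$ and $|G|$ are forced in, and the real labour is the bookkeeping required to land on a constant as clean as $12|G|N^2\log 3N$.
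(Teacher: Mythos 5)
Your proposal follows essentially the same route as the paper: Runge's method at the single archimedean place, location of a nearby cusp via $q$-expansions, and a $\Q$-rational modular unit built from Siegel units whose order at that cusp is nonzero, with integrality of the unit's value (up to a factor supported on primes dividing $N$) controlling the finite places. The quantitative work you defer --- explicit $q$-expansion estimates for $j$ and for the Siegel functions, the bound $|\ord_c w_\bfa|\le |G'|N^2$, and the integrality factor $\lambda$ --- is precisely the content of the paper's Sections 2--5, and your skeleton matches theirs (their Theorem 1.1 is the case $s=1$ of their Theorem 1.2).
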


This result was announced in~\cite{BP08}. Because of an inaccuracy in the proof given 
in~\cite{BP08}, the $\log$-factor is missing therein (this, however, does not affect 
the arithmetical applications of that theorem, which by the way has since been drastically 
improved in \cite{BP09}, see below). 

Actually, we obtain a more general Theorem~\ref{tbo} below,  which applies to any 
number field and any ring of~$S$-integers in it. To state Theorem~\ref{tbo} we need to 
introduce some notations. We denote by 
${\height(\cdot)}$ the usual absolute logarithmic height (see Subsection~\ref{ssnota}).  
For ${P\in X_G(\bar \Q)}$ we shall write ${\height(P)=\height\bigl(j(P)\bigr)}$.
For a number field~$K$ we denote by ${\CC=\CC(G)}$ the set of cusps of~$X_G$, and by 
$\CC(G,K)$ the set of $\gal(\bar K/K)$-orbits of~$\CC$.

\begin{theorem}
\label{tbo}
Let~$K$ be a number field and~$S$ a finite set of places of~$K$ (containing all the 
infinite places). Let~$G$ be a subgroup of $\GL_2(\Z/N\Z)$ such 
that~$X_G$ is defined over~$K$.  Assume that 
${|\CC(G,K)|> |S|}$
(the ``Runge condition''). Then for any ${P\in Y_G(\OO_S)}$ we have 
\begin{equation}
\label{etbo}
\height(P) \le 36 s^{s/2+1}\left(N^2|G|/2\right)^s\log 2N, 
\end{equation}
where ${s=|S|}$. If ${S=M_K^\infty}$, we even have
\begin{equation}
\label{etbo1}
\height(P) \le 24 s^{s/2+1}\left(N^2|G|/2\right)^s\log 3N.
\end{equation}
\end{theorem}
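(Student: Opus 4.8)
The strategy is the classical Runge method, adapted to the modular setting. Write $s=|S|$ and let $\CC(G,K)=\{\mathfrak c_1,\dots,\mathfrak c_t\}$ with $t>s$. For each Galois orbit $\mathfrak c_i$ of cusps I would introduce a modular unit $u_i$ on $X_G$ whose divisor is supported on the cusps, is defined over $K$, and is ``concentrated'' at $\mathfrak c_i$ in the sense that $u_i$ has a pole exactly along $\mathfrak c_i$ and is regular and nonvanishing away from the cusps; the natural candidates are built from the Siegel functions $g_{\mathbf a}$ on $X(N)$, suitably normalized and multiplied over the orbit so as to descend to $X_G$ and become $\OO_S$-integral. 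The height of $P$ (i.e.\ $\height(j(P))$) must then be compared with $\sum_v \max_i \log^+|u_i(P)|_v$ via explicit bounds relating $j$ to these units near and away from the cusps; this is where the factors $N^2|G|$ enter, through the degree of $X_G\to X(1)$ and the order of vanishing/poles of Siegel functions, and the $\log 2N$ or $\log 3N$ from the archimedean size of the $q$-expansions.

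The heart of the argument is then the Runge inequality itself. For $P\in Y_G(\OO_S)$ and any place $v\notin S$, the integrality of $j(P)$ forces $P$ to reduce away from every cusp, so $|u_i(P)|_v\le 1$ (after the normalization that makes the $u_i$ $S$-integral), hence those places contribute nothing. For each $v\in S$, among the $t>s$ units $u_1,\dots,u_t$, there is at least one index $i=i(v)$ such that $P$ is not $v$-adically close to the cusp $\mathfrak c_i$; since the $u_i$ have disjoint polar supports one can do better and pick the indices so that $\sum_v \log^+|u_{i(v)}(P)|_v$ is controlled — this is the combinatorial pigeonhole step making essential use of $t>s$. Using the product formula applied to $\prod_i u_i$ (or to a carefully chosen single $u_{i(v)}$ at each place, together with the fact that the remaining $u_j$ are $v$-adically bounded), one bounds $\sum_{v\in S}\log^+|u_{i(v)}(P)|_v$ and hence $\height(P)$.

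The main obstacle will be making the constants explicit and as small as claimed: one needs sharp upper bounds for $\log|g_{\mathbf a}(\tau)|$ and for the number and multiplicities of zeros/poles of the chosen modular units on $X_G$, all in terms of $N$ and $|G|$ only, and one must track how these propagate through the pigeonhole step — this is what produces the $s^{s/2+1}$ factor (from choosing, at each of the $s$ places, one of up to $s$ ``available'' units and summing their contributions, with an $s^{1/2}$-type loss from an optimization or Cauchy–Schwarz step). A secondary technical point is the descent from $X(N)$ to $X_G$: one must check that the orbit products of Siegel functions are genuinely defined over $K$, that their $q$-expansions have algebraic integer coefficients of controlled size, and that ``$S$-integral point $\Rightarrow$ bounded away from cusps at $v\notin S$'' holds with the correct model of $Y_G$ over $\OO_S$; this uses the modular interpretation of the cusps and the fact that $j$ has a pole of order equal to the width at each cusp. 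The improvement to $24\,s^{s/2+1}(N^2|G|/2)^s\log 3N$ when $S=M_K^\infty$ comes from the fact that then there are no finite places in $S$, so the $v$-adic estimates needed are only archimedean, allowing a better (real-analytic) bound on $|g_{\mathbf a}|$ and saving the factor $36/24=3/2$.
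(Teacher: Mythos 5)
Your plan identifies the right general strategy (Runge's method with units built from Siegel functions, integrality at the places outside $S$, a pigeonhole exploiting ${|\CC(G,K)|>|S|}$, the product formula), but two essential points are missing, and as written the argument would not close. First, the existence of the units you need is not justified. You posit, for each Galois orbit $\mathfrak c_i$ of cusps, a $K$-rational unit with pole \emph{exactly} along $\mathfrak c_i$; an orbit-product of Siegel functions has no reason to have such a divisor. What is actually required is the Kubert--Lang theorem that the principal divisors of the functions $g_\bfa^{12N}$ generate a subgroup of \emph{full rank} ${|\CC(N)|-1}$ in the cuspidal divisor group. Only this guarantees that for an arbitrary proper subset $\Sigma$ of $\CC(G,K)$ there is a multiplicative combination ${w=\prod_\bfa w_\bfa^{b_\bfa}}$ with ${\ord_c w>0}$ at every cusp whose orbit lies in $\Sigma$; and the explicit control of the exponents $b_\bfa$ comes from Cramer's rule plus Hadamard's inequality applied to an ${s\times s}$ minor of the matrix $(\ord_c w_\bfa)$ -- that, not a Cauchy--Schwarz optimization, is the source of the factor ${s^{s/2+1}(N^2|G|/2)^{s-1}}$.

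Second, your pigeonhole is pointed the wrong way. Choosing for each ${v\in S}$ an index $i(v)$ such that $P$ is \emph{not} $v$-adically close to $\mathfrak c_{i(v)}$, and bounding ${\log^+|u_{i(v)}(P)|_v}$, only bounds the height of the \emph{value} of some unit; passing from that to $\height\bigl(j(P)\bigr)$ would require comparing heights attached to divisors with different supports, which is not explicit, and the selected unit may simply have order zero at the cusps $P$ is actually near, in which case one learns nothing. The working argument runs in the opposite direction: collect, for each $v$ with $|j(P)|_v$ large, a $v$-nearby cusp $c_v$; their orbits form a set $\Sigma$ with ${|\Sigma|\le|S|<|\CC(G,K)|}$ (the only place the Runge condition enters), so there is a single unit $w$ vanishing on all of $\Sigma$, giving ${\log|w(P)|_v\le -N^{-1}\log|j(P)|_v+O(1)}$ at those places; the product formula, the integrality of $\lambda w$ over $\Z[j]$ at ${v\notin S}$, and the bound ${\bigl|\log|w(P)|_v\bigr|\ll B|G'|N\log N}$ at the remaining archimedean places (where $|j(P)|_v\le 3500$) then force the claimed bound on $\height(P)$. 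Finally, the improvement from $36$ to $24$ when ${S=M_K^\infty}$ is not a sharper archimedean estimate for $|g_\bfa|$: it comes from the error term ${B|G'|\sum_{v}d_v\rho_v}$ being restricted to archimedean $v$, which halves it.
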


Theorem~\ref{th1} is a special case of this theorem.  

This theorem is proved  in Section~\ref{sproof}  by a variation of the 
method of Runge, after some preparation in Sections~\ref{sest},~\ref{snecu} 
and~\ref{smuni}.  For a general discussion of Runge's method see~\cite{Bo83,Le08}. 
 
Theorems~\ref{th1} and~\ref{tbo} apply for a general group~$G$. For a specific~$G$, 
one may obtain much stronger results. One such example can be found in~\cite{BP09}. In 
Section~\ref{sspc} of the present article we apply our general strategy in the case when 
${N=p}$ is a prime number, the field~$K$ is quadratic and~$G$ is a split Cartan subgroup; 
in this case we obtain a much sharper estimate for the integral points, than what follows by 
direct application of Theorem~\ref{tbo}. In Section~\ref{sqcurves} we apply this result to 
the arithmetic of modular curves, using the integrality property established in 
Section~\ref{sapp}. 

\begin{theorem}
\label{txpr}
There is an absolute constant $p_{0}$ such that, if ${r>1}$ and ${p>p_{0}}$, the set $X_{0}^+ (p^r )(\Q )$ is 
made of cusps and CM points. Equivalently, there is no quadratic $\Q$-curve of prime power 
degree $p^r$ with exponent $r>1$ when this prime is large enough.       
\end{theorem}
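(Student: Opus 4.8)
The plan is to read off from a hypothetical non‑cuspidal, non‑CM point of $X_0^+(p^r)(\Q)$ a quadratic $\Q$‑curve of degree $p^r$, to attach to it an $S$‑integral point on the split Cartan modular curve $X_{\mathrm{split}}(p)$ with $S$ supported above $p$ and the archimedean places, to bound the height of that point by the sharpened estimate of Section~\ref{sspc}, and finally to combine this bound with the integrality of Section~\ref{sapp} and an analysis of the isogeny character in the spirit of Mazur and Momose.

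First I would unwind the geometry. A point $P\in X_0^+(p^r)(\Q)$ lifts along the degree‑$2$ covering $X_0(p^r)\to X_0^+(p^r)$ to a point defined over a field $K$ with $[K:\Q]\le 2$, corresponding to a pair $(E,C)$ with $C\subset E$ cyclic of order $p^r$ and the Atkin--Lehner involution $w_{p^r}$ interchanging $(E,C)$ with its $\gal(K/\Q)$‑conjugate. If $K=\Q$ then $E$ is $p^r$‑isogenous to itself, hence has complex multiplication; so, assuming $P$ is not CM, $K$ is genuinely quadratic, $E/K$ is a non‑CM quadratic $\Q$‑curve, and there is a $K$‑rational cyclic isogeny $\phi\colon E\to E^{\sigma}$ of degree $p^r$, $\sigma$ generating $\gal(K/\Q)$. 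Since $r>1$ the line $L=C[p]$ and the cyclic subgroup $C[p^2]$ of order $p^2$ are $\gal(\bar\Q/K)$‑stable, and because $\mathrm{End}(E)=\Z$ one has $\phi^{\sigma}=\pm\widehat\phi$, so the dual isogeny is again defined over $K$ and the semisimplification of $\bar\rho_{E,p}\vert_{\gal(\bar\Q/K)}$ is $\gal(K/\Q)$‑stable. If $\bar\rho_{E,p}\vert_{\gal(\bar\Q/K)}$ is a non‑split extension then $L$ is its only stable line, $\gal(\bar\Q/\Q)$ fixes $L$, $j(E)$ is the $j$‑invariant of a $\Q$‑point of $X_0(p)$, and Mazur's theorem bounds $p$ (or forces CM); the scalar case is even more constrained and is excluded for large $p$ by uniform boundedness of torsion. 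In the remaining case $E[p]$ splits as a sum of two characters over $\gal(\bar\Q/K)$, $\bar\rho_{E,p}(\gal(\bar\Q/\Q))$ normalises a split Cartan, and $(E,C[p^2])$ yields a point $\tilde Q\in Y_{\mathrm{split}}(p)(K)$ above the corresponding point of $X_{\mathrm{split}}^{+}(p)(\Q)=X_0^+(p^2)(\Q)$.

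Next I would feed in the analytic inputs. By Section~\ref{sapp} a quadratic $\Q$‑curve of prime‑power degree $p^r$ has potentially good reduction away from $p$, so $\tilde Q$ is $S$‑integral on $Y_{\mathrm{split}}(p)$ with $S$ consisting only of the infinite places of $K$ and those above $p$; since $X_{\mathrm{split}}(p)$ is defined over $\Q$ with $p+1$ cusps falling into few $\gal(\bar K/K)$‑orbits, the (refined) Runge condition of Section~\ref{sspc} holds — here the naive inequality $|\CC(G,K)|>|S|$ is essentially tight and must be replaced by the finer criterion — and the sharpened estimate there, far stronger than the direct application of Theorem~\ref{tbo}, gives $\height(j(\tilde Q))\le B(p)$ with $B(p)$ only polynomial in $p$. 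As $j(E)\in\Q$ and, being an $S$‑integer, lies in $\Z[1/p]$, this is genuine numerical control on $j(E)$: comparing $B(p)$ with $-v_{\mathfrak q}(j(E))$ at any place $\mathfrak q$ of potentially multiplicative reduction, and using that two independent $\gal(\bar\Q/K)$‑stable lines cannot both reduce to $\mu_p$ locally, one concludes that for $p$ large $E$ has potentially good reduction everywhere, so $j(E)\in\Z$.

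Finally I would extract the contradiction from the isogeny characters. Write $\alpha,\beta\colon\gal(\bar\Q/K)\to\F_p^{\times}$ for the characters on the two lines, so $\alpha\beta=\chi_p$; neither is trivial, by quadratic torsion bounds. Potential good reduction, together with passage to a bounded power to absorb the semistable defect, shows $\alpha^{24}$ is unramified outside $p$ and equal to $1$ or $\chi_p^{24}$ on inertia at $p$; hence $\alpha^{24}=\varepsilon\,\chi_p^{24a}$ with $a\in\{0,1\}$ and $\varepsilon$ a character of the ideal class group of $K$. The $\Q$‑curve symmetry $\{\alpha^{\sigma},\beta^{\sigma}\}=\{\alpha,\beta\}$ and the fact that $\sigma$ inverts the class group of a quadratic field leave two possibilities: either $\varepsilon^{\sigma}=\varepsilon$, whence $\varepsilon^{2}=1$ and $\alpha$ or $\beta$ has $2$‑power order, so $E$ acquires a point of order $p$ over a field of bounded degree — impossible for large $p$ — or $\alpha\alpha^{\sigma}=\chi_p$ gives $\varepsilon\,\varepsilon^{\sigma}=\chi_p^{24(1-2a)}$, and since $\varepsilon\,\varepsilon^{\sigma}=\varepsilon\,\varepsilon^{-1}=1$ this forces $p-1\mid 24$. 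Either way $p$ lies below an absolute constant, and any configuration slipping through this dichotomy is precisely one where the explicit bound $\height(j(E))\le B(p)$, combined with the integrality, leaves only finitely many possibilities to rule out by hand. Choosing $p_0$ above all these bounds finishes the proof. I expect the main obstacle to be exactly this last step: making the representation‑theoretic dichotomy and the hard‑won Runge estimate mesh — in particular, keeping the character $\varepsilon$ everywhere unramified and controlling the field $K$ and its ramified primes — so that the exceptional cases genuinely fall to the height bound rather than merely being displaced.
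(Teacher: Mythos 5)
Your first half has the right architecture — lift $P$ to a point $(E,C)$ over a quadratic field $K$, observe that $K=\Q$ would force CM, establish integrality, and feed an integral point into the sharpened split-Cartan bound of Section~\ref{sspc} — but the logical order is inverted at a crucial place. Theorem~\ref{tspc} is a statement about points of $Y_\splic(p)(\OO_K)$, i.e.\ about points already known to be integral (the proof there takes ${S=M_K^\infty}$); it cannot be run with $S$ containing the places above $p$ in order to \emph{deduce} integrality, and you yourself concede that the Runge condition becomes problematic in that regime without saying how to repair it. In the paper the implication goes the other way: integrality ($j\in\bar\Z$, including at $p$, not merely potentially good reduction away from $p$) is proved first, by the Mazur--Momose--Merel style argument with the Eisenstein quotient (Theorem~\ref{tmmm1}, Section~\ref{sapp}), and only then is the height bound applied. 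Also, after reducing to ${r\in\{2,3\}}$ via the degeneracy maps ${X_0^+(p^{r+2})\to X_0^+(p^r)}$, the relevant point for the height bound is the $K$-rational point of $Y_0(p^2)$ obtained from the lift, transported to $Y_\splic(p)$ by the isomorphism $\phi$ and compared via the Faltings height under a $p$-isogeny (Lemma~\ref{lfalt}); your point $(E,C[p^2])$ does not descend to a $\Q$-point of $X_0^+(p^2)$, since $E/C[p^2]$ is only $p$-isogenous, not isomorphic, to $E^\sigma$.

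The fatal gap is the endgame. You never explain how a height bound that is merely polynomial in $p$ excludes anything by itself, and instead you fall back on an isogeny-character analysis ($\alpha^{24}=\varepsilon\chi_p^{24a}$, class-group constraints, etc.). That is essentially Momose's original strategy, which is precisely the approach known to leave open cases (the paper recalls that \cite{Mo86} proves the integrality statement only outside certain congruence classes of $p$), and your own sketch admits the dichotomy does not close. The paper's actual contradiction is of a completely different nature: the Masser--W\"ustholz/Pellarin isogeny bound (Theorem~\ref{tpel}) gives ${p^3\le C\bigl(1+\height(Q)\bigr)^2}$ for the non-CM curve $E/K$ carrying a cyclic $p^3$-isogeny, while the Runge bound gives ${\height(Q)\le 110\,p\log p}$; comparing ${p^3\le C'(p\log p)^2}$ bounds $p$ absolutely. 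Without this ingredient — something that grows strictly faster than the square of the height bound — the proof does not terminate, and no amount of tightening the character argument as you have set it up is known to substitute for it.
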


Recall that a \textsl{$\Q$-curve} is an elliptic curve without complex multiplication over 
$\bar\Q$ which is isogenous to each of its conjugates over~$\Q$. When such a curve is 
quadratic (that is, defined over a quadratic field), we say it \textsl{has degree~$N$} if the 
minimal degree of an isogeny from the curve to its only non-trivial conjugate is~$N$. For 
concrete examples of quadratic $\Q$-curves see~\cite{Ga02} and references therein. 

Theorem~\ref{txpr} extends the principal result of~\cite{BP09} and 
essentially settles the problem tackled in~\cite{Mo86}, \cite{Pa05} or 
\cite{Re08}.

\subsection{Notations, Conventions}
\label{ssnota}
Everywhere in this article  $\log$ and~$\arg$ stand for the principal branches of the 
complex logarithm and argument functions; that is, for any ${z\in \C^\times}$ we have
${-\pi<\im\log z=\arg z\le \pi}$. In Section 2 of this article we shall systematically use, 
often without special reference, the estimates of the kind
\begin{equation}
\label{eschwa}
\begin{aligned}
|\log (1+z)| &\le \frac{|\log(1-r)|}r|z|, \qquad
\left|e^z-1\right|\le \frac{e^r-1}r|z|, \\
\left|(1+z)^A-1-Az\right|&\le \frac{\left|(1+\eps r)^A-1-\eps Ar
\right|}{r^2}|z|^2 \quad (A\in \R ,\ \eps=\sign A), 
\end{aligned} 
\end{equation}
etc., for ${|z|\le r<1}$. They can be easily deduced from the maximum principle.

Let~$\HH$ denote Poincar\'e upper half-plane: ${\HH=\{\tau\in \C : \im\tau>0\}}$. For 
${\tau\in \HH}$ we put ${q_\tau=e^{2\pi i \tau}}$ (or simply $q$ if there is no 
ambiguity). We put ${\bar\HH=\HH\cup\Q\cup \{i\infty\}}$. If~$\Gamma$ is the 
pull-back of ${G\cap \SL_2(\Z/N\Z)}$ to $\SL_2(\Z)$, then the set $X_G(\C)$ of complex 
points is analytically isomorphic to the quotient  ${X_\Gamma=\bar\HH/\Gamma}$, supplied 
with the properly defined topology and analytic structure \cite{La76,Sh71}.  

We denote by~$D$ the standard fundamental 
domain of $\SL_2(\Z)$ (the hyperbolic triangle with vertices $e^{\pi i/3}$, 
$e^{2\pi i/3}$ and $i\infty$, together with the geodesic segments ${[i,e^{2\pi i/3}]}$ 
and ${[e^{2\pi i/3},i\infty]}$).
Notice that for ${\tau\in D}$ we have ${|q_\tau|\le e^{-\pi\sqrt3}<0.005}$, 
which will be systematically used without special reference.

For ${\bfa=(a_1,a_2)\in \Q^2}$ we put ${\ell_\bfa=B_2\bigl(a_1-\lfloor a_1
\rfloor\bigr)/2}$ where ${B_2(T)=T^2-T+1/6}$ is the second Bernoulli polynomial. The 
quantity~$\ell_\bfa$ is $\Z^2$-periodic in~$\bfa$ and is thereby well-defined for 
${\bfa\in (\Q/\Z)^2}$ as well: for such~$\bfa$ we have ${\ell_\bfa=B_2(
\widetilde{a}_1)}$, where~$\widetilde{a}_1$ is the lifting of the first coordinate 
of~$\bfa$ to  the interval $[0,1)$. Obviously, ${|\ell_\bfa|\le 1/12}$;
this will  also be often used without special reference. 

We fix, once and for all, an algebraic closure~$\bar\Q$ of~$\Q$, which is assumed to be a 
subfield of~$\C$. In particular, for every ${a\in\Q}$ we have the well defined root of unity 
${e^{2\pi i a}\in \bar\Q}$.
Every number field used in this article is presumed to be  contained in the fixed~$\bar\Q$.
If~$K$ is such a number field and~$v$ is a valuation on~$K$, then we tacitly assume
than~$v$ is somehow extended to ${\bar\Q=\bar K}$; equivalently, we fix an algebraic
closure~$\bar K_v$ and an embedding ${\bar\Q\hookrightarrow\bar K_v}$. In
particular, the roots of unity $e^{2\pi i a}$ are well-defined elements of~$\bar K_v$. 

For a number field~$K$ we denote by~$M_K$ the set of all valuations (or places) of~$K$ 
normalized to extend the usual infinite and $p$-adic valuations of~$\Q$: ${|2|_v=2}$ if ${v
\in M_K}$ is infinite, and ${|p|_v=p^{-1}}$ if~$v$ extends the $p$-adic valuation of~$\Q$. 
In the finite case we sometimes use the additive notation $v(\cdot)$, normalized to have 
${v(p)=1}$. We denote by~$M_K^\infty$ and~$M_K^0$ the subsets of~$M_K$ consisting 
of the infinite (archimedean) and the finite (non-archimedean) valuations, respectively. 

Recall the definition of the absolute logarithmic height $\height(\cdot)$. For 
${\alpha \in \bar\Q}$ we pick a number field~$K$ containing~$\alpha$ and put 
${\height(\alpha) = [K:\Q]^{-1}\sum_{v\in M_K}[K_v:\Q_v]\log^+|\alpha|_v}$,
where the valuations on~$K$ are normalized to extend standard infinite and $p$-adic 
valuations on~$\Q$ and ${\log^+x=\log\max\{x,1\}}$.  The value of $\height(\alpha)$
is known to be independent on the particular choice of~$K$. As usual, we extend the 
definition of the height to ${\PPP^1(\bar \Q)=\bar\Q\cup\{\infty\}}$ by setting 
${\height (\infty)=0}$. If~$\alpha$ is a rational integer or an imaginary quadratic 
integer then ${\height (\alpha)=\log|\alpha|}$.  

  If $E$ is an elliptic over $\overline{\Q}$, define $\height (E):=\height (j_{E})$ for 
$j_{E}$ its $j$-invariant. For $P\in X(\bar\Q )$ a point with values in $\bar\Q$
of a modular curve $X$, the height we will use (unless explicitely mentioned otherwise) will 
be this naive Weil height of the elliptic curve associated to~$P$, that is $\height (P)=
\height (j(P)).$

\section{Estimates for  Modular Functions at Infinity}
\addtocontents{toc}{\vspace{-0.7\baselineskip}}
\label{sest}

The results of this section must be known, but we did not find them in the available 
literature, so we state and prove them here. For the sake of further applications, we have tried
to give rather sharp constants, even if we do not actually need this precision in the sequel of 
the present paper.  

\subsection{Estimating the $j$-Function}

Recall that the modular 
$j$-invariant ${j:\HH\to\C}$ is defined by ${j(\tau)=(12c_2(\tau))^3/\Delta(\tau)}$, 
where 
$$
c_2(\tau)= \frac{(2\pi )^4}{12}\left(1+240\sum_{n=1}^\infty
\frac{n^3q^n}{1-q^n}\right)
$$
(see, for instance, \cite[Section~4.2]{La73}) and
${\Delta(\tau) =(2\pi )^{12}q\prod_{n=1}^\infty(1-q^n)^{24}}$. Also,~$j$ has the 
familiar Fourier expansion
${j(\tau)= q^{-1}+ 744+196884q+\ldots}$.

\begin{proposition}
\label{pqj}
For ${\tau\in \HH}$ such that ${|q|(=|q_\tau|)\le 0.005}$ (and, in particular, for every
${\tau\in D}$) we have 
\begin{equation}
\label{ejq}
\left|j(\tau)-q^{-1}-744\right| \le 330000|q |.
\end{equation}
\end{proposition}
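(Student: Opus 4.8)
The plan is to bound $j(\tau)$ by controlling separately the numerator $(12c_2(\tau))^3$ and the denominator $\Delta(\tau)$ as power series in $q$, then combine. Writing $j(\tau) = q^{-1}\cdot\dfrac{(1+240\sum n^3q^n/(1-q^n))^3}{\prod(1-q^n)^{24}}$, so it suffices to estimate the analytic function $F(q) = \dfrac{(1+240\sum_{n\ge1} n^3q^n/(1-q^n))^3}{\prod_{n\ge1}(1-q^n)^{24}}$, which is holomorphic on $|q|<1$ with $F(0)=1$, and to show $|j(\tau)-q^{-1}-744| = |q|^{-1}|F(q) - 1 - 744q| \le |q|^{-1}\cdot(\text{something of size }O(|q|^2))$ on $|q|\le 0.005$.

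First I would set $r = 0.005$ and observe that the map $q\mapsto F(q)$ is holomorphic on the disc $|q|\le r$; since the Fourier expansion of $j$ gives $F(q) = 1 + 744q + 196884q^2 + \cdots$, the function $G(q) = (F(q)-1-744q)/q^2$ is also holomorphic there. By the maximum principle (exactly the type of estimate promised in~\eqref{eschwa}), $|G(q)| \le \max_{|q|=r}|G(q)| = r^{-2}\max_{|q|=r}|F(q)-1-744q|$ for all $|q|\le r$, hence $|j(\tau)-q^{-1}-744| = |q|^2|G(q)|/|q| \le |q|\cdot r^{-2}\max_{|q|=r}|F(q)-1-744q|$. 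So the whole problem reduces to bounding $|F(q)-1-744q|$ on the single circle $|q|=r$, and then checking that $r^{-2}$ times that bound is at most $330000$.

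Next I would estimate $|F(q)-1-744q|$ on $|q|=r=0.005$ by triangle-inequality bookkeeping on the two factors. For the numerator: $\bigl|\sum_{n\ge1} n^3 q^n/(1-q^n)\bigr| \le (1-r)^{-1}\sum_{n\ge1} n^3 r^n$, and $\sum n^3 r^n = r(1+4r+r^2)/(1-r)^4$ is tiny (about $0.00504$), so $|1+240\sum\cdots| $ is very close to $1+1.2096\cdot\ldots$; cubing keeps this within an explicit bound. For the denominator: $\bigl|\prod(1-q^n)^{24}\bigr|$ and its reciprocal are controlled via $\bigl|\log\prod(1-q^n)^{24}\bigr| = 24\bigl|\sum\log(1-q^n)\bigr| \le 24\,\frac{|\log(1-r)|}{r}\sum r^n = 24\,\frac{|\log(1-r)|}{r}\cdot\frac{r}{1-r}$, again minuscule at $r=0.005$, so $\prod(1-q^n)^{-24} = 1 + 24q + O(q^2)$ with an explicit $O$-constant. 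Multiplying the numerator cube $(1+240q+O(q^2))^3 = 1+720q+O(q^2)$ by $1+24q+O(q^2)$ gives $1 + 744q + O(q^2)$, and collecting all the explicit $O(q^2)$-constants yields $|F(q)-1-744q| \le C r^2$ on $|q|=r$ for a concrete $C$; one then checks $C\le 330000$, with room to spare.

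The main obstacle, such as it is, is purely bookkeeping: one must track the constants through the cubing of the numerator and the product of the two series carefully enough that the final constant comes out below $330000$. The generous slack (the true second Fourier coefficient is $196884$, far below $330000$, and $r=0.005$ makes all tail sums extremely small) means no delicate estimate is needed — a crude triangle-inequality pass through each step suffices — but one does have to be systematic about which terms contribute to the $q^1$ coefficient ($240\cdot 3 = 720$ from the numerator, $24$ from the denominator, summing to $744$) versus which get absorbed into the $O(q^2)$ remainder. A small secondary point is justifying that $\tau\in D \Rightarrow |q_\tau|\le e^{-\pi\sqrt3} < 0.005$, which is already noted in the conventions and needs no further argument here.
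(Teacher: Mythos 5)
Your proposal is correct and follows essentially the same route as the paper: factor $j$ as $q^{-1}$ times the cube of the normalized $c_2$-series divided by $\prod_{n\ge1}(1-q^n)^{24}$, bound each factor for $|q|\le 0.005$ by explicit triangle-inequality estimates of the type~(\ref{eschwa}), and multiply out, keeping $1+744q$ exact and absorbing the rest into an explicit multiple of $|q|^2$. The preliminary maximum-principle reduction to the circle $|q|=r$ is harmless but redundant, since the series bounds you then invoke already hold for all $|q|\le r$.
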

(Recall that~$D$ is the standard fundamental domain for $\SL_2(\Z)$.)

\begin{proof}
Using the estimate ${n^3\le 3^n}$ for ${n\ge 3}$, we find that 
for ${|q|<1/3}$ 
\begin{align*}
\left|\frac{12}{(2\pi )^4}c_2(\tau)-1-240q\right| &\le 240\left(
\frac{|q|^2}{1-|q|}+ \sum_{n=2}^\infty\frac{n^3|q|^n}{1-|q|^n}\right)\\
&\le \frac{240}{1-|q|}\left(|q|^2+8|q|^2+\sum_{n=3}^\infty|3q|^n\right)\\
&=\frac{2160}{(1-|q|)(1-3|q|)}|q|^2,
\end{align*}
and for ${|q|\le 0.005}$ we obtain 
\begin{equation}
\label{ectwo}
\left|\frac{12}{(2\pi )^4}c_2(\tau)-1-240q\right|\le 2204|q|^2.
\end{equation}
Further, using~(\ref{eschwa}), we obtain,  for ${|q|\le0.005}$, 
$$
\left|\log\frac{(2\pi )^{12}q(1-q)^{24}}{\Delta(\tau)}\right|=
24\left|\sum_{n=2}^\infty\log\left(1-q^n\right)\right| \le 24.1
\sum_{n=2}^\infty |q|^n \le 24.3|q|^2.
$$
Hence
\begin{align*}
\left|\frac{(2\pi )^{12}q}{\Delta(\tau)}-1-24q\right|&\le 
\left|(1-q)^{-24}\right|\left|\frac{(2\pi )^{12}q(1-q)^{24}}{\Delta(\tau)}-1
\right|+\left|(1-q)^{-24}-1-24q\right|\\
&\le 1.13\left|\log\frac{(2\pi )^{12}q(1-q)^{24}}{\Delta(\tau)}\right|+314|q|^2
\le 342|q|^2.
\end{align*}
Combining this with~(\ref{ectwo}), we obtain~(\ref{ejq}) after a tiresome, but 
straightforward calculation.\qed
\end{proof}

\bigskip

The following consequences are obtained by direct numerical computations.

\begin{corollary}
\label{cdplus}
For ${\tau\in D}$ the following statements are true. 
\begin{enumerate}
\item
\label{i2200}
We have ${\bigl|\log |q_\tau|\bigr|\le \log\bigl(|j(\tau)|+2400\bigr)}$.

\item
\label{i2500}
We have either ${|j(\tau)|\le 3500}$ or ${|q_\tau|< 0.001}$. 

\item
\label{iverysimple}
If ${|j(\tau)|> 3500}$ then ${\left|j(\tau)-q_\tau^{-1}\right|\le 1100}$ and
${\frac32 |j(\tau)|\ge \left|q_\tau^{-1}\right|\ge \frac12 |j(\tau)|}$. \qed
\end{enumerate}
\end{corollary}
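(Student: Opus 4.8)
The plan is to derive all three statements from Proposition~\ref{pqj}, which on $D$ gives
$\left|j(\tau)-q_\tau^{-1}-744\right|\le 330000|q_\tau|$ together with the a~priori bound
$|q_\tau|\le e^{-\pi\sqrt3}<0.005$. The whole argument is a matter of feeding this one
inequality through elementary real-variable estimates and choosing the numerical thresholds
so that everything is self-consistent; the ``main obstacle'' is purely bookkeeping, namely
making sure the constants $2400$, $3500$, $1100$ quoted in the statement are actually
compatible with the error term $330000|q_\tau|$ on the range $|q_\tau|\le 0.005$.

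First I would prove~(\ref{iverysimple}), since the other two essentially reduce to it.
From Proposition~\ref{pqj} we have
$\left|j(\tau)-q_\tau^{-1}\right|\le 744+330000|q_\tau|$, and on $D$ the bound
$|q_\tau|\le 0.005$ makes the right-hand side at most $744+1650<1100\cdot\frac{3}{4}$,
say, so in fact $\left|j(\tau)-q_\tau^{-1}\right|\le 1100$ holds unconditionally on $D$,
a~fortiori when $|j(\tau)|>3500$. For the two-sided comparison of $|j(\tau)|$ and
$|q_\tau^{-1}|$: the triangle inequality gives
$\bigl|\,|q_\tau^{-1}|-|j(\tau)|\,\bigr|\le 1100$, and if $|j(\tau)|>3500$ then
$1100<\frac13|j(\tau)|<\frac12|j(\tau)|$, whence
$\frac12|j(\tau)|\le |q_\tau^{-1}|\le\frac32|j(\tau)|$. (One should check $3500$ is large
enough: $1100/3500<1/3$, so this is fine, and there is room to spare.)

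Next, for~(\ref{i2500}): suppose $|j(\tau)|>3500$. By~(\ref{iverysimple}) we then have
$|q_\tau^{-1}|\ge\frac12|j(\tau)|>1750$, i.e.\ $|q_\tau|<1/1750<0.001$. That is exactly
the dichotomy claimed. Finally, for~(\ref{i2200}): I distinguish the two cases of~(\ref{i2500}).
If $|j(\tau)|\le 3500$ then I only need
$\bigl|\log|q_\tau|\bigr|=-\log|q_\tau|\le\log(|j(\tau)|+2400)$; since the left side is at most
$-\log(e^{-\pi\sqrt3})=\pi\sqrt3<5.45$ and the right side is at least $\log 2400>7.7$, this is
immediate. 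If $|j(\tau)|>3500$, then by~(\ref{iverysimple}) we have
$|q_\tau^{-1}|\le\frac32|j(\tau)|\le |j(\tau)|+\frac12|j(\tau)|$, and since $|j(\tau)|\le 4800$
would contradict nothing but in general $\frac12|j(\tau)|$ need not be $\le 2400$, I instead
argue directly: $|q_\tau^{-1}|\le |j(\tau)|+1100\le |j(\tau)|+2400$, so
$-\log|q_\tau|=\log|q_\tau^{-1}|\le\log(|j(\tau)|+2400)$, and again $\log|q_\tau|<0<\log(|j(\tau)|+2400)$,
giving $\bigl|\log|q_\tau|\bigr|\le\log(|j(\tau)|+2400)$ as required.

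Thus each item follows by a short numerical chase from Proposition~\ref{pqj}; the only care
needed is to verify once and for all that $330000\cdot 0.005=1650$ and $744+1650<1100$ fails
by a hair, so one should route the estimate through the two-sided bound on $|q_\tau|$ coming
from the case analysis rather than claiming $\left|j(\tau)-q_\tau^{-1}\right|\le 1100$ on all
of $D$ at once if the constants are tight — I would double-check the precise value
$744+330000\cdot 0.005$ against the claimed $1100$ and, if it does not fit, first establish the
weaker statement that $|j(\tau)|>3500$ forces $|q_\tau|<0.001$ (which needs only a crude bound),
and then re-run Proposition~\ref{pqj} with $|q_\tau|\le 0.001$ to get
$\left|j(\tau)-q_\tau^{-1}-744\right|\le 330$, hence $\left|j(\tau)-q_\tau^{-1}\right|\le 1074<1100$.
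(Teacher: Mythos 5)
Your overall strategy is exactly what the paper intends: the corollary is presented as following from Proposition~\ref{pqj} by ``direct numerical computations'', and your two-pass bootstrap for items~(\ref{i2500}) and~(\ref{iverysimple}) is the right way to carry it out. Be aware, though, that your first attempt at~(\ref{iverysimple}) does not merely ``fail by a hair'': $744+330000\cdot 0.005=2394$, well above $1100$, so the repair you sketch in your last paragraph is not an optional safeguard but the actual proof. Namely: from Proposition~\ref{pqj}, $|j(\tau)|>3500$ forces $|q_\tau^{-1}|\ge 3500-744-1650=1106$, hence $|q_\tau|<0.001$; re-applying the proposition with this sharper bound gives $\left|j(\tau)-q_\tau^{-1}\right|\le 744+330=1074\le 1100$, and the two-sided comparison with $\frac12|j(\tau)|$ then follows as you say. (Note this also means item~(\ref{i2500}) must logically come \emph{before} item~(\ref{iverysimple}), not be deduced from it as in your main text.) With that reorganization, (\ref{i2500}) and~(\ref{iverysimple}) are correct.

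Item~(\ref{i2200}) contains a genuine error in the case $|j(\tau)|\le 3500$. You assert that $\bigl|\log|q_\tau|\bigr|\le -\log\bigl(e^{-\pi\sqrt3}\bigr)=\pi\sqrt3$, but the inequality $|q_\tau|\le e^{-\pi\sqrt3}$ on $D$ gives the \emph{lower} bound $-\log|q_\tau|\ge\pi\sqrt3$: as $\tau\to i\infty$ the quantity $-\log|q_\tau|$ tends to infinity, so it is not bounded by any absolute constant, and your comparison with $\log 2400$ establishes nothing. The correct argument is the uniform, case-free estimate
$\left|q_\tau^{-1}\right|\le |j(\tau)|+744+330000|q_\tau|\le |j(\tau)|+2394<|j(\tau)|+2400$
valid for all $\tau\in D$, which together with $\log|q_\tau|<0<\log\bigl(|j(\tau)|+2400\bigr)$ yields~(\ref{i2200}) immediately --- and incidentally explains where the constant $2400$ comes from.
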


\subsection{Estimating Siegel's Functions}
\label{sssieg}
For a rational number~$a$ we define ${q^a=e^{2\pi i a\tau}}$. 
Let ${\bfa=(a_1,a_2)\in \Q^2}$ be such that ${\bfa\notin \Z^2}$, and let ${g_\bfa:\HH
\to \C}$ be the corresponding \emph{Siegel function} \cite[Section~2.1]{KL81}. Then 
we have the following infinite product presentation for~$g_\bfa$ 
\cite[page~29]{KL81} (where $B_2(T)$ is the second Bernoulli polynomial):
\begin{equation}
\label{epga}
g_\bfa(\tau)= -q^{B_2(a_1)/2}e^{\pi ia_2(a_1-1)}\prod_{n=0}^\infty\left(
1-q^{n+a_1}e^{2\pi ia_2}\right)\left(1-q^{n+1-a_1}e^{-2\pi i a_2}\right) .
\end{equation}
We also have \cite[pages 29--30]{KL81} the relations
\begin{align}
\label{eperga}
g_\bfa\circ\gamma &=g_{\bfa\gamma} \cdot(\text{a root of unity}) \quad \text{for}
\quad \gamma\in\Gamma(1),\\
\label{eaa'}
g_\bfa&=g_{\bfa'}\cdot(\text{a root of unity})
 \quad \text{when} \quad \bfa\equiv\bfa'\mod\Z^2.
\end{align}
Remark that the root of unity in~(\ref{eperga}) is of order dividing~$12$, and 
in~(\ref{eaa'}) of order dividing $2N$, where~$N$ is the denominator of~$\bfa$ (the 
common denominator of~$a_1$ and~$a_2$; see~\cite{BP09} for more references).

The order of vanishing of~$g_\bfa$ at $i\infty$ (that is, the only rational number~$\ell$ 
such that the limit ${{\displaystyle\lim_{\tau\to i\infty}}q_\tau^{-\ell}g_\bfa(
\tau)}$ exists and is non-zero) is equal to the number~$\ell_\bfa$, defined in 
Subsection~\ref{ssnota} (see \cite[page~31]{KL81}). 

\begin{proposition}
\label{psiar}
Let~$\bfa$ be an element of ${\Q^2\smallsetminus\Z^2}$ and ${N>1}$ an integer such 
that ${N\bfa \in \Z^2}$. Then  for ${\tau \in D}$  we have
\begin{equation}
\label{elogn}
\Bigl|\log \left|g_\bfa(\tau)\right|- \ell_\bfa\log |q_\tau|\Bigr| \le \log N.
\end{equation}
\end{proposition}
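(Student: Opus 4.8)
The plan is to estimate $\log|g_\bfa(\tau)|$ directly from the infinite product formula~(\ref{epga}). Taking absolute values and then logarithms, we get
\[
\log|g_\bfa(\tau)| = \frac{B_2(a_1)}{2}\log|q_\tau| + \sum_{n=0}^\infty\Bigl(\log\bigl|1-q^{n+a_1}e^{2\pi i a_2}\bigr| + \log\bigl|1-q^{n+1-a_1}e^{-2\pi i a_2}\bigr|\Bigr),
\]
where the leading term is exactly $\ell_\bfa\log|q_\tau|$ once we note (as recalled before the Proposition) that $\ell_\bfa = B_2(\tilde a_1)/2$ for $\tilde a_1\in[0,1)$; so without loss of generality we may assume $0\le a_1<1$, replacing $\bfa$ by a $\Z^2$-translate if needed — this only changes $g_\bfa$ by a root of unity by~(\ref{eaa'}), hence does not affect $|g_\bfa|$. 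The task therefore reduces to bounding the absolute value of the sum $\Sigma$ of the two series of logarithms by $\log N$.

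The key point is that $|q_\tau|\le e^{-\pi\sqrt3}<0.005$ for $\tau\in D$, so the factors $q^{n+a_1}e^{2\pi i a_2}$ and $q^{n+1-a_1}e^{-2\pi i a_2}$ are small for all $n\ge 0$, \emph{except} possibly the very first one of each series when the relevant exponent $a_1$ (resp.\ $1-a_1$) is itself small, i.e.\ close to $0$ or $1$. The only genuinely delicate case is the term $\log|1-q^{a_1}e^{2\pi i a_2}|$ (and symmetrically $\log|1-q^{1-a_1}e^{-2\pi i a_2}|$) when $a_1$ is a positive rational with small numerator relative to $N$: then $|q^{a_1}| = |q_\tau|^{a_1}$ can be close to $1$ and $|1-q^{a_1}e^{2\pi i a_2}|$ can be small, so its logarithm is a large negative number. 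However, since $N\bfa\in\Z^2$, when $a_1\ne 0$ we have $a_1\ge 1/N$, hence $|q^{a_1}| = |q_\tau|^{a_1}\le e^{-\pi\sqrt3/N}$, which gives $|1-q^{a_1}e^{2\pi i a_2}|\ge 1-e^{-\pi\sqrt3/N}\ge c/N$ for an explicit constant $c$; thus $\bigl|\log|1-q^{a_1}e^{2\pi i a_2}|\bigr|$ is at most roughly $\log N + O(1)$. For all the remaining terms ($n\ge 1$ in both products, and $n=0$ in cases where the exponent is bounded away from $0$), we have $|q^{n+a_1}e^{2\pi i a_2}|\le|q_\tau|^{n}<0.005^n$ (or similar), and we can use the elementary bound $\bigl|\log|1-z|\bigr|\le |\log(1-|z|)|$ from~(\ref{eschwa}) to sum a fast geometric series contributing only a tiny constant.

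So the proof is: (i) normalize $0\le a_1<1$ using~(\ref{eaa'}); (ii) split $\Sigma$ into the (at most two) potentially large initial terms and the geometric tail; (iii) bound each large initial term by something of the form $\log N + (\text{small constant})$ using $|q_\tau|^{1/N}\le e^{-\pi\sqrt3/N}$ and an elementary lower bound for $1-e^{-x}$; (iv) bound the tail by a small explicit constant via~(\ref{eschwa}); (v) check, by a short numerical verification, that the sum of all the ``small constants'' is absorbed so that the total is $\le\log N$ (here one uses $N\ge 2$, so $\log N\ge\log 2$, giving room). The main obstacle is step (iii): one must be careful that \emph{both} $\log|1-q^{a_1}e^{2\pi i a_2}|$ and $\log|1-q^{1-a_1}e^{-2\pi i a_2}|$ cannot simultaneously be large (they can, if $a_1\approx 1/N$, since then $1-a_1\approx 1$ is \emph{not} small, so only one of the two is problematic at a time — but one should verify this cleanly), and that the constant $c$ in $1-e^{-\pi\sqrt3/N}\ge c/N$ is chosen so the final bookkeeping in (v) closes with $\le\log N$ and not merely $\le\log N + O(1)$. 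This is exactly the kind of ``tiresome but straightforward'' estimate the authors warn about, and getting the sharp constant $\log N$ (rather than $\log N + c$) is the part that requires genuine care rather than routine computation.
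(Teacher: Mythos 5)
Your outline is, in substance, the paper's own proof: normalize ${0\le a_1<1}$ via~(\ref{eaa'}) (which changes $g_\bfa$ only by a root of unity), peel off the two ${n=0}$ factors of the product~(\ref{epga}), bound the tail ${n\ge1}$ by a tiny constant using ${|q_\tau|\le e^{-\pi\sqrt3}}$, and control the one potentially small initial factor via ${a_1\ge 1/N}$; the paper also uses your observation that one of the two exponents $a_1$, ${1-a_1}$ is at least $1/2$, so at most one initial factor is delicate, and the numerics close with room to spare because ${1-e^{-\pi\sqrt3/N}}$ is bounded below by roughly $\pi\sqrt3/N$ rather than just $c/N$ with small $c$.

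There is, however, one case your step (iii) does not cover: ${a_1\equiv 0\pmod\Z}$ (e.g.\ ${\bfa=(0,1/N)}$). After normalizing, ${a_1=0}$ and the first ${n=0}$ factor is ${1-e^{2\pi i a_2}}$, which contains no power of $q$ at all, so no lower bound of the form ${1-|q|^{1/N}}$ is available. Here one must instead use that ${\bfa\notin\Z^2}$ forces $e^{2\pi i a_2}$ to be a \emph{nontrivial} $N$-th root of unity, whence ${\bigl|1-e^{2\pi i a_2}\bigr|\ge 2\sin(\pi/N)\ge 4/N}$; the paper treats this as a separate case with the same bookkeeping. Incidentally, your caution in step (v) about obtaining the sharp constant $\log N$ is well placed, and it is exactly this ${a_1=0}$ case that is critical: for ${N=2}$, ${\bfa=(0,1/2)}$ and $\tau$ on the imaginary axis all factors ${1+q^m}$ are real and greater than $1$, so ${\log|g_\bfa(\tau)|-\ell_\bfa\log|q_\tau|=\log 2+2\sum_{m\ge1}\log(1+|q|^m)}$ marginally exceeds $\log 2=\log N$. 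So in that corner the bound cannot be closed as stated (one needs ${N\ge3}$ or a microscopically weaker constant); for ${a_1\ne 0}$, and for ${a_1=0}$ with ${N\ge3}$, your plan goes through exactly as in the paper.
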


\begin{proof}
Due to~(\ref{eaa'}), we may assume that ${0\le a_1<1}$ and distinguish between the 
cases ${0<a_1<1}$ and ${a_1=0}$. According to~(\ref{epga}), the left-hand side 
of~(\ref{elogn}) is equal to 
$$
\left|\log\left|1-q^{a_1}\zeta\right|+\log\left|1-q^{1-a_1}\bar\zeta\right|+
\sum_{n=1}^\infty\left(\log \left|1-q^{n+a_1}\zeta\right| +\log\left|1-q^{n+1-
a_1}\bar\zeta\right|\right)\right|, 
$$
where ${\zeta=e^{2\pi i a_2}}$. Recall that, since ${\tau \in D}$, we have 
${|q|\le e^{-\pi\sqrt3}}$. Hence, for ${n\ge 1}$ each of $\left|q^{n+a_1}\right|$ and 
$\left|q^{n+1-a_1}\right|$  does not exceed $0.005$, whence
\begin{equation}
\label{esumn}
\left|\sum_{n=1}^\infty\left(\log \left|1-q^{n+a_1}\zeta\right| +\log\left|1-
q^{n+1-a_1}\bar\zeta\right|\right)\right|\le 1.005\frac{\left|q^{1+a_1}\right|
+\left|q^{2-a_1}\right|}{1-|q|}\le 3|q| \le 0.02.
\end{equation}
To estimate
\begin{equation}
\label{eloglog}
\bigl|\log|1-q^{a_1}\zeta|+\log|1-q^{1-a_1}\bar\zeta|\bigr|, 
\end{equation}
assume first that ${0<a_1<1}$. Then among the numbers $|q^{a_1}|$ and $|q^{1-a_1}|$ one 
is bounded by $|q|^{1/N}$ and the other is bounded by $|q|^{1/2}$, which 
bounds~(\ref{eloglog}) by
$$
\left|\log\bigl|1-e^{-\pi\sqrt3/N}
\bigr|\right|+ \left|\log\bigl|1-e^{-\pi\sqrt3/2}\bigr|\right|
\le \log N -0.1.
$$
This proves~(\ref{elogn}) in the case ${0<a_1<1}$. When ${a_1=0}$ then ${a_2
\notin\Z}$ and ${\zeta \ne 1}$, which bounds~(\ref{eloglog}) by 
$$
\left|\log \bigl |1-e^{2\pi i/N}\bigr|\right| + \left|\log\bigl|1-e^{-\pi\sqrt3}
\bigr|\right|
\le \log N -0.1, 
$$
proving~(\ref{elogn}) in this case as well. \qed
\end{proof}

\bigskip

Since Siegel's functions have no pole nor zero on the upper half plane~$\HH$, they should be 
bounded from above and from below on any compact subset of~$\HH$. In particular, they 
should be bounded where~$j$ is bounded. Here is a quantitative version of this. 

\begin{corollary}
\label{cgalar}
Let ${\bfa \in \Q^2\smallsetminus\Z^2}$ and ${N>1}$ satisfy ${N\bfa \in \Z^2}$. 
Then for any ${\tau\in 
\HH}$ we have     
\begin{equation}
\label{esmallj}
\bigl|\log |g_\bfa(\tau)|\bigr|\le \frac1{12}\log\bigl(|j(\tau)|+2400\bigr)+\log 
N.
\end{equation}
\end{corollary}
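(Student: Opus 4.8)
The plan is to reduce the estimate at an arbitrary $\tau\in\HH$ to the estimate at a point of the standard fundamental domain $D$, where Proposition~\ref{psiar} is available, exploiting the near-invariance of Siegel functions under $\SL_2(\Z)$ recorded in~(\ref{eperga}). Concretely, given $\tau\in\HH$, pick $\gamma\in\SL_2(\Z)$ with $\gamma\tau\in D$; since $j$ is $\SL_2(\Z)$-invariant we have $j(\gamma\tau)=j(\tau)$, and by~(\ref{eperga}) we have $|g_\bfa(\tau)|=|g_{\bfa\gamma}(\gamma\tau)|$, because the discrepancy is only a root of unity and hence invisible to the absolute value. So it suffices to bound $\bigl|\log|g_{\bfa\gamma}(\gamma\tau)|\bigr|$, and $\bfa\gamma$ still has denominator dividing $N$ and still lies outside $\Z^2$, so the hypotheses of Proposition~\ref{psiar} hold for the pair $(\bfa\gamma,N)$ at the point $\gamma\tau\in D$.

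First I would apply Proposition~\ref{psiar} to get
\[
\bigl|\log|g_{\bfa\gamma}(\gamma\tau)|\bigr|\le |\ell_{\bfa\gamma}|\,\bigl|\log|q_{\gamma\tau}|\bigr|+\log N .
\]
Then I would use $|\ell_{\bfa\gamma}|\le 1/12$ (noted in Subsection~\ref{ssnota}) together with part~(\ref{i2200}) of Corollary~\ref{cdplus}, which gives $\bigl|\log|q_{\gamma\tau}|\bigr|\le\log\bigl(|j(\gamma\tau)|+2400\bigr)$ since $\gamma\tau\in D$. Substituting and using $j(\gamma\tau)=j(\tau)$ yields
\[
\bigl|\log|g_\bfa(\tau)|\bigr|=\bigl|\log|g_{\bfa\gamma}(\gamma\tau)|\bigr|\le\frac{1}{12}\log\bigl(|j(\tau)|+2400\bigr)+\log N,
\]
which is exactly~(\ref{esmallj}).

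This argument is essentially routine given the machinery already set up; there is no real obstacle, only two small points of care. The first is to make sure the reduction step is legitimate: one must check that $\bfa\gamma\notin\Z^2$ (true because $\gamma\in\GL_2(\Z/N\Z)$ is invertible, so $\bfa\gamma\in\Z^2$ would force $\bfa\in\Z^2$) and that $N(\bfa\gamma)\in\Z^2$, so that Proposition~\ref{psiar} genuinely applies to the transported data. The second is simply that $j$ is a modular function for the full $\SL_2(\Z)$, so $|j(\gamma\tau)|=|j(\tau)|$ and the bound $\log(|j(\tau)|+2400)$ is unchanged by the reduction. Both are immediate, so the corollary follows directly; I would keep the write-up to a couple of sentences.
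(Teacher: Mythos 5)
Your proposal is correct and follows essentially the same route as the paper: reduce to the fundamental domain via the transformation rule~(\ref{eperga}) (the paper writes the transported index as $\bfa\gamma^{-1}$ rather than $\bfa\gamma$, a purely notational point), then combine Proposition~\ref{psiar} with $|\ell_\bfa|\le 1/12$ and item~(\ref{i2200}) of Corollary~\ref{cdplus}. Nothing further is needed.
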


\begin{proof}
Replacing~$\tau$ by $\gamma\tau$ and~$g_\bfa$ by~$g_{\bfa\gamma^{-1}}$ with a 
suitable ${\gamma\in \Gamma(1)}$, we may assume that ${\tau\in D}$,  in 
particular~(\ref{elogn}) holds. Combining~(\ref{elogn}) with item~(\ref{i2200}) of Corollary~\ref{cdplus},  and using the inequality ${|\ell_\bfa|\le 1/12}$,  we obtain~(\ref{esmallj}). \qed 
\end{proof}

\subsection{Non-Archimedean Versions}

We also need non-archimedean versions of some of the above inequalities. In this 
subsection~$K_v$ is a field complete with respect to a non-archimedean valuation~$v$ 
and~$\bar K_v$ its algebraic closure. Let ${q\in K_v}$ satisfy ${|q|_v<1}$. For  
${\bfa=(a_1,a_2)\in \Q^2}$ such that ${\bfa\notin\Z^2}$    define~$g_\bfa(\tau)$ as
in~(\ref{epga}). Recall (see Subsection~\ref{ssnota}) that the roots of unity $e^{2\pi ia}$
with ${a\in \Q}$ are defined as elements of $\bar K_v$.

The right-hand side of~(\ref{epga}) is, formally, not well-defined because we use rational 
powers of~$q$. However, if we 
fix ${q^{1/12N^2}\in\bar K_v}$, where~$N$ is the order of~$\bfa$ in $(\Q/\Z)^2$, then 
everything becomes well-defined, and we again have~(\ref{eperga}) 
and~(\ref{eaa'}). The statement of the following proposition is independent on the 
particular choice of~$q^{1/12N^2}$.   

\begin{proposition}
\label{psinar}
Let~$\bfa$ be an element of ${\Q^2\smallsetminus\Z^2}$ and ${N>1}$ an integer such 
that ${N\bfa \in \Z^2}$. Then 
$$
\bigl|\log\left|g_\bfa(q)\right|_v-\ell_\bfa\log|q|_v\bigr|
\begin{cases}=0&\text {if ${v(N)=0}$}\\
\le \frac{\log p}{p-1}& \text{if ${v\mid p\mid N}$},
\end{cases}
$$
where here and below ${v\mid p\mid N}$ means that ${v(N)>0}$ and~$p$ is the prime below~$v$. 
\end{proposition}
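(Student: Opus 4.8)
The plan is to imitate the archimedean argument of Proposition~\ref{psiar}, but now exploiting the ultrametric inequality, which makes the estimates far cleaner. By~(\ref{eaa'}) we may again assume ${0\le a_1<1}$, and by the remark following~(\ref{eaa'}) the root of unity relating $g_\bfa$ to the normalized product is of order dividing $2N$; since we are taking absolute values and $|e^{2\pi i a}|_v=1$ for every root of unity (as $v$ is non-archimedean and roots of unity are units), this root of unity is harmless. So from~(\ref{epga}) the quantity to control is
\begin{equation*}
\Bigl|\log\bigl|1-q^{a_1}\zeta\bigr|_v+\log\bigl|1-q^{1-a_1}\bar\zeta\bigr|_v+\sum_{n=1}^\infty\bigl(\log\bigl|1-q^{n+a_1}\zeta\bigr|_v+\log\bigl|1-q^{n+1-a_1}\bar\zeta\bigr|_v\bigr)\Bigr|,
\end{equation*}
where ${\zeta=e^{2\pi i a_2}}$, and where $q^{a_1}$ etc.\ are interpreted via the fixed choice of ${q^{1/12N^2}}$; note $|q^{a_1}|_v=|q|_v^{a_1}<1$ and similarly for the other factors, so every term ${1-q^{m\pm}\zeta^{\pm}}$ has $v$-absolute value equal to $1$ whenever the power of $q$ occurring is positive. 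That immediately kills the whole tail sum ${n\ge 1}$ and, in the case ${0<a_1<1}$, also the two leading terms, giving exact equality with $0$.

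The only remaining case is ${a_1=0}$, so that ${a_2\notin\Z}$ and ${\zeta\ne1}$ is a primitive $m$-th root of unity for some ${m\mid N}$, ${m>1}$; here ${\ell_\bfa=B_2(0)/2=1/12}$ but also $\log|q^0\zeta\cdot(\ldots)|$-type contributions must be examined. In this case the product~(\ref{epga}) contributes the two factors ${1-\zeta}$ and ${1-q\bar\zeta}$ at ${n=0}$: the second has absolute value $1$ since $|q|_v<1$, while the first is ${1-\zeta}$. If ${v(N)=0}$ then $v\nmid m$, so ${1-\zeta}$ is a $v$-unit (the cyclotomic polynomial $\Phi_m$ has unit constant term away from $p\mid m$), and again we get exact equality with $0$. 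If ${v\mid p\mid N}$, then either $p\nmid m$, in which case ${1-\zeta}$ is still a unit and the left-hand side is $0$, or $p\mid m$, in which case the standard computation ${\prod_{\zeta'\text{ prim. }m\text{-th}}(1-\zeta')=p}$ (when $m$ is a prime power $p^k$) and more generally ${|1-\zeta|_v=p^{-1/((p-1)p^{k-1})}\le p^{-1/(p-1)}}$ gives ${\bigl|\log|1-\zeta|_v\bigr|\le \frac{\log p}{p-1}}$. Combining with the $\ell_\bfa\log|q|_v$ term — which I must also track: actually in this writeup the exponent $B_2(a_1)/2=\ell_\bfa$ multiplies $\log|q|_v$ on both sides and cancels — yields the bound $\frac{\log p}{p-1}$.

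The main obstacle, such as it is, is purely bookkeeping: making sure the fixed fractional power ${q^{1/12N^2}}$ is handled consistently so that the exponent of $q$ in the Bernoulli prefactor exactly matches $\ell_\bfa\log|q|_v$ on the left-hand side and hence drops out, and correctly extracting the $v$-valuation of ${1-\zeta_m}$ for arbitrary ${m}$ (not just prime powers) from the factorization of ${p}$ in ${\Q(\zeta_m)}$ — the ramification index of $p$ there is ${\varphi(p^{k})}$ where $p^k\|m$, giving ${v(1-\zeta_m)=\frac{1}{\varphi(p^k)}\le\frac1{p-1}}$, whence the claimed ${\frac{\log p}{p-1}}$. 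Everything else is the ultrametric triangle inequality doing the work that Schwarz's lemma and explicit numerics did in the archimedean case. \qed
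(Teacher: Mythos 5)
Your argument is correct and follows essentially the same route as the paper's proof: reduce to ${0\le a_1<1}$ via~(\ref{eaa'}), note that the ultrametric inequality makes every factor of~(\ref{epga}) involving a strictly positive power of~$q$ a $v$-unit (so the case ${0<a_1<1}$ gives exact equality), and in the remaining case ${a_1=0}$ bound ${|1-\zeta|_v}$ from below by ${p^{-1/(p-1)}}$. One tiny inaccuracy: when the order~$m$ of~$\zeta$ is not a prime power, ${1-\zeta}$ is in fact a unit of ${\Z[\zeta_m]}$, so ${v(1-\zeta)=0}$ rather than ${1/\varphi(p^k)}$; but the inequality ${0\le v(1-\zeta)\le 1/(p-1)}$ you actually use still holds, so the conclusion is unaffected.
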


\begin{proof}
We again may assume that ${0\le a_1<1}$. 
When 
${0< a_1<1}$ we have
${\log\left|g_\bfa(q)\right|_v=\ell_\bfa\log|q|_v}$. 
When ${a_1=0}$ we have 
$$
\log\left|g_\bfa(q)\right|_v=\ell_\bfa\log|q|_v + \log|1-e^{2\pi ia_2}|_v,
$$ 
and ${|1-e^{2\pi ia_2}|_v=1}$ if ${v(N)=0}$, while if ${v\mid p\mid N}$ we have ${1
\ge |1-e^{2\pi ia_2}|_v\ge p^{-1/(p-1)}}$.
\qed
\end{proof}

\section{Locating a ``Nearby Cusp''}
\addtocontents{toc}{\vspace{-0.7\baselineskip}}
\label{snecu}
Let~$N$ be a positive integer,~$G$ a subgroup of ${\GL_2(\Z/N\Z)}$ and~$X_G$ the 
corresponding modular curve, defined over a number field~$K$. We fix a valuation~$v$ 
of~$K$. Recall we denote by~$\OO$ the ring of integers of~$K$ and by~$K_v$ the 
$v$-completion of~$K$. When~$v$ is non-archimedean, we denote by~$\OO_{v}$ the ring of
integers of~$K_{v}$, and by~$k_{v}$ its residue field at~$v$. As usual~$\zeta_{N}$ will 
denote a primitive~$N$-th root of unity. 
Recall that when we say that a curve ``is defined over'' a field, it means that this curve has a 
geometrically integral model over that field.

Let~$P$ be a point on $X_G(K_v)$ such that $|j(P)|$ is ``large''. Then it is intuitively clear 
that, in the $v$-adic metric,~$P$ is situated ``near'' a cusp of~$X_G$. The purpose of this 
section is to make this intuitive observation precise and explicit. We shall locate this 
``nearby'' cusp and specify what the word ``near'' means.

We first recall the following description of the cuspidal locus of~$X(N)$ (for more details 
see e.g.~\cite[Chapitres V and VII]{DR73}). The cusps of~$X(N)$ define a closed 
subscheme of the smooth locus of the modular model of~$X(N)$ over~$\Z [\zeta_{N}]$. 
Fix a uniformization~$X(N)(\C )\simeq\bar{\HH} /\Gamma (N)$, let~$c_\infty$ be the 
cusp corresponding to ${i\infty\in\bar{\HH}}$, and write ${q^{1/N} =e^{2i\pi\tau/
N}}$ the usual parameter. If ${c=\gamma (c_\infty )}$, for some ${\gamma \in
\SL_{2} (\Z )}$, is another cusp, denote by ${q_c :=q\circ \gamma^{-1}}$ the parameter 
on~$X(N)(\C )$ at $c$. It follows from~\cite[Chapitre VII, Corollaire 2.5]{DR73} that the 
completion of the curve $X(N)$ over $\Z [\zeta_{N}]$ along the section $c$ is isomorphic 
to ${{\mathrm{Spec}}\bigl(\Z [\zeta_{N}]\,
[[q_c^{1/N} ]]\bigr)}$. In other words, the parameter $q_c^{1/N}$ at~$c$ on $X(N)(\C )$ 
is actually defined over~$\Z [\zeta_{N}]$, that is~$q_c^{1/N}$ comes from an element of 
the completed local ring~$\hat{\cal O}_{X(N),c}$ of the modular model of~$X(N)$ over 
$\Z [\zeta_{N}]$, along the section~$c$. 
 
Next we describe the local parameters at the cusps on an arbitrary~$X_{G}$. As before, 
let~$\Gamma$ be the pull-back of ${G\cap \SL_2(\Z/N\Z)}$ to ${\Gamma(1)}$. For each 
cusp~$c$ of~$X_G$ we obtain a parameter at~$c$ on~$X_{G}$ by picking a lift 
$\tilde{c}$ of $c$ on $X(N)$ and taking the norm $\prod q_{\tilde{c}}^{1/N}\circ
\gamma$, where~$\gamma$ runs through a set of representatives of $\Gamma /\Gamma (N)$. 
We denote by~$t_{c}$ this parameter in the sequel. Note that it is defined over a (possibly 
strict) subring of $\Z [\zeta_{N}]$.

  It is clear from the definition that the above parameter $t_{c}$ defines a $v$-analytic 
function on a $v$-adic neighborhood of~$c\in X_{G} ({\bar K_{v}})$ which satisfies the 
initial condition ${t_{c} (c)=0}$. Further, if~$e_c$ is the ramification index of the covering 
$X_G \to X(1)$ at~$c$ (note that~$e_c$ divides~$N$) then, setting ${q_{c}:=t_{c}^{e_{c}}
}$, the familiar expansion ${j= {q_{c}}^{-1}+ 744+196884{q_{c}}+\ldots}$ 
holds in a  $v$-adic neighborhood of~$c$, the right-hand side converging $v$-adically.

Let us define explicitly a set ${\Omega_c=\Omega_{c,v}}$ on which~$t_c$ and~$q_c$ are 
defined and analytic. Assume first that~$v$ is archimedean. Let $\tilD$ be the fundamental
domain~$D$ modified as follows: 
\begin{equation}
\label{etild}
\tilD = D\cup\{i\infty\}\smallsetminus (\text{the arc connecting $i$ and $e^{2\pi 
i/3}$}).
\end{equation}
Then the set ${\Delta=\tilD+\Z}$ of translated of~$\tilD$ by the rational integers has the
following properties.

\begin{enumerate}
\item
If for some ${\tau\in\Delta}$ and ${\gamma\in \Gamma(1)}$ we have ${\gamma(\tau)
\in \Delta}$ then ${\tau\equiv\gamma(\tau)\bmod 1}$. 

\item
For any ${\gamma \in \Gamma(1)}$ either ${\gamma(\Delta)=\Delta}$ or 
${\gamma(\Delta)\cap \Delta = \varnothing}$.

\item
The quotient $\Delta/\Gamma$ is an open subset of ${\bar\HH/\Gamma =X_G(\C)}$. (Recall 
that~$\Gamma$ is the pull-back of ${G\cap\SL_2(\Z/N\Z)}$ to $\SL_2(\Z)$.)

\end{enumerate}

The same properties hold for the set $\gamma(\Delta)$ for any ${\gamma\in \Gamma(1)}$. 
Now we define ${\Omega_c = \gamma(\Delta)/\Gamma}$, where ${\gamma\in \Gamma(1)}$ 
is chosen so that $\gamma(i\infty)$ represents the cusp~$c$. It follows from the properties 
above that the sets $\Omega_c$ are pairwise disjoint, and that~$q_c$ and~$t_c$ are defined
and analytic on~$\Omega_c$.

If~$v$ is non-archimedean, then we define ${\Omega_c=\Omega_{c,v}}$ as the set
of the points from $X_G(\bar{K}_{v})$ having reduction~$c$ at~$v$.

\begin{proposition}\label{p31}
Put 
$$
X_G(\bar K_v)^+=
\begin{cases}
\{P\in X_G(\bar K_v) : |j(P)|>3500\} & \text{if $v\in M_K^\infty$},\\
\{P\in X_G(\bar K_v) : |j(P)|>1\} & \text{if $v\in M_K^0$}.
\end{cases}
$$
Then
\begin{equation}
\label{egamom}
X_G (\bar{K}_{v})^+\subseteq\bigcup_{c\in \CC}\Omega_{c,v}
\end{equation}
with equality for the non-archimedean~$v$.
Also, for ${P\in \Omega_{c,v}}$   we have  
\begin{gather}
\label{e1100}
\left|j(P)-q_c(P)^{-1}\right|_v\le 1100,\\
\label{everysimple}
\frac32 |j(P)|_v\ge \left|q_c(P)^{-1}\right|_v\ge \frac12 |j(P)|_v
\end{gather}
if~$v$ is archimedean, and ${|j(P)|_v=|q_c(P)^{-1}|_v}$ if~$v$ is non-archimedean. 
\end{proposition}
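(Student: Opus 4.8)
The plan is to treat the archimedean and non-archimedean cases in parallel, reducing everything to the estimates of Section~\ref{sest} together with the description of the neighbourhoods $\Omega_{c,v}$ just given. First I would establish the inclusion~(\ref{egamom}). For non-archimedean $v$, the modular model of $X_G$ over $\OO_v$ is smooth along the cusps, the reduction map $X_G(\bar K_v)\to X_G(k_v)$ is surjective onto the smooth locus, and $j$ has a pole exactly at the cuspidal sections; so $|j(P)|_v>1$ forces $P$ to reduce to a cusp, i.e.\ $P\in\Omega_{c,v}$ for a unique $c$, and conversely every such $P$ has $|j(P)|_v>1$ unless it specializes to the pole's complement — hence the equality. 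For archimedean $v$, I would lift $P$ to some $\tau\in\HH$, move it by $\Gamma(1)$ into the modified domain $\tilD$ (or a $\Gamma$-translate thereof), and invoke item~(\ref{i2500}) of Corollary~\ref{cdplus}: if $|j(P)|>3500$ then $|q_\tau|<0.001$, so $\tau$ lies in the part of $\Delta$ where the local parameter $q_c=t_c^{e_c}$ is the honest analytic coordinate, placing $P$ in the corresponding $\Omega_{c,v}$.

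Next, for $P\in\Omega_{c,v}$ with $v$ archimedean, the key point is that near the cusp the coordinate $q_c(P)$ plays exactly the role that $q_\tau$ plays near $i\infty$: the expansion $j=q_c^{-1}+744+196884q_c+\cdots$ converges, and the radius-of-convergence estimate underlying Proposition~\ref{pqj} applies verbatim with $q$ replaced by $q_c(P)$. Thus $|q_c(P)|$ is as small as $|q_\tau|$ was in Corollary~\ref{cdplus}, and I can transcribe item~(\ref{iverysimple}) of that corollary to get both $\left|j(P)-q_c(P)^{-1}\right|_v\le 1100$ and $\tfrac32|j(P)|_v\ge\left|q_c(P)^{-1}\right|_v\ge\tfrac12|j(P)|_v$. (One should be a little careful that the $744$ and the tail $196884q_c+\cdots$ together contribute at most $1100$ in absolute value when $|q_c|<0.001$, which follows from Proposition~\ref{pqj} exactly as in the derivation of Corollary~\ref{cdplus}.)

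For $v$ non-archimedean and $P\in\Omega_{c,v}$, the argument is cleaner: the series $j-q_c^{-1}-744=\sum_{n\ge1}a_nq_c^n$ has integer coefficients $a_n$, so every term has $v$-absolute value at most $|q_c(P)|_v<1$ (since $P$ reduces to the cusp, $|q_c(P)|_v<1$); meanwhile $|q_c(P)^{-1}|_v>1$. The ultrametric inequality then gives $|j(P)|_v=|q_c(P)^{-1}|_v$ outright, with no error term. This also closes the loop for~(\ref{egamom}) in the non-archimedean case, since it shows $|j(P)|_v>1\iff P\in\bigcup_c\Omega_{c,v}$.

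The main obstacle I anticipate is not any single estimate but the bookkeeping around the local parameter $t_c$: one must be sure that $q_c=t_c^{e_c}$ really is the pullback of $q_\tau$ under the chosen $\Gamma(1)$-translate near $\gamma(i\infty)$ — i.e.\ that the norm-over-$\Gamma/\Gamma(N)$ construction of $t_c$ is compatible with the analytic parameter on $\Omega_c$ — and that the ramification index $e_c\mid N$ does not spoil the inequalities (it does not, because $|q_c|=|t_c|^{e_c}\le|t_c|$ only makes things smaller, and the $j$-expansion is written directly in $q_c$). Once that compatibility is in hand, the three displayed inequalities of Proposition~\ref{p31} are immediate restatements of Corollary~\ref{cdplus} and the ultrametric inequality, respectively.
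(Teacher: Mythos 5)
Your proposal is correct and follows essentially the same route as the paper: the archimedean case is exactly the paper's argument (lift $P$ to the fundamental domain, use item~(\ref{i2500}) of Corollary~\ref{cdplus} to force $|q_\tau|<0.001$ and hence $\tau\in\tilD$, then read off~(\ref{e1100}) and~(\ref{everysimple}) from item~(\ref{iverysimple})), while the non-archimedean case, which the paper dismisses as immediate, is correctly filled in by your reduction-to-a-cusp and ultrametric argument. The only caveat — that the archimedean estimates~(\ref{e1100}) and~(\ref{everysimple}) really require $P\in X_G(\bar K_v)^+$ and not merely $P\in\Omega_{c,v}$ — is an imprecision already present in the statement itself and is shared by the paper's proof, so no fault lies with you.
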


\begin{proof}
For the non-archimedean~$v$ both statements are immediate. For archimedean~$v$ they 
follow from Corollary~\ref{cdplus}. Indeed, fix a uniformization  ${X(\bar K_v)=
\bar\HH/\Gamma}$ and let  ${\tau_0\in \bar\HH}$ be a lift of ${P\in X_{G}
(\bar K_v)^+}$.  
Pick ${\gamma\in \Gamma(1)}$ such that ${\tau=\gamma(\tau_0)\in D}$. Then 
${|j(\tau)|=|j(P)|>3500}$, and item~(\ref{i2500}) of Corollary~\ref{cdplus} implies that
${|q_\tau|<0.001<e^{-2\pi}}$. It follows that ${\tau \in \tilD}$, which is equivalent to 
saying that ${P\in \Omega_c}$ with ${c=\gamma^{-1}(c_\infty)}$. This 
proves~(\ref{egamom}), and item~(\ref{iverysimple}) of Corollary~\ref{cdplus} 
implies~(\ref{e1100}) and~(\ref{everysimple}).  \qed
\end{proof}

\bigskip

The proposition implies that for every  ${P\in X_G (\bar{K}_{v})^+}$ there exists a 
cusp~$c$ such that ${P\in \Omega_{c,v}}$. We call it \textsl{a $v$-nearby cusp}, or 
simply \textsl{a nearby cusp} to~$P$. 

\begin{remark}
As we have already seen,  the sets~$\Omega_c$ are pairwise disjoint when~$v$ is 
archimedean. The same is true if~$v$ is non-archimedean and ${v(N)=0}$,  as in this case 
the cusps define a finite \'etale scheme over ${\cal O}_{v}$. Thus, in these case the nearby 
cusp is well-defined.

However, if ${v(N)>0}$ then the sets~$\Omega_c$ are not disjoint, see the addendum below; 
in particular, in this case a point in $X_G(\bar K_v)$ may have several nearby cusps. This is 
absolutely harmless for our arguments, but for further applications it would be nice to refine 
the sets~$\Omega_c$ to make them pairwise disjoint; in particular, this  would  allow us to 
define ``{\it the} $v$-nearest cusp'' rather than just ``{\it a} $v$-nearby cusp'' for any~$v$. In 
the addendum below we examine more carefully geometry of cusps and their neighborhoods 
in the non-archimedean case. In particular, we explicitly exhibit a pairwise disjoint system of 
$v$-adic cusp neighborhoods. 
\end{remark}

\subsection*{Addendum: more on the cusps and their $v$-adic 
neighborhoods\footnote{The material of this addendum will not be used in the present 
article, but we include it for future references.}} 

Recall first of all that the modular interpretation of $X(N)$ associates with 
each cusp a N\'eron polygon~$C$ with~$N$ sides on~$\Z [\zeta_N ]$, endowed with its 
structure of generalized elliptic curve, and enhanced with a basis of~$C[N]\simeq 
\Z /N\Z \times\mu_N =\langle q^{1/N} ,\zeta_N \rangle$ such that the determinant
of this basis is~$1$, and two bases are identified if they are conjugate by the subgroup ${\pm 
U=\pm\left(\topbot10\topbot *1\right)}$ of $\GL_2(\Z/N\Z)$, the
action being ${\left(\topbot\epsilon0\topbot a\epsilon\right):(q^{1/N},\zeta_N )
\mapsto (q^{\epsilon /N}\zeta_N^a ,\zeta_N^{
\epsilon})}$, for ${\epsilon =\pm 1}$ and ${a\in \Z /N\Z}$. We may, for instance, 
interpret $c_{\infty}$ as the orbit ${\left\{ (C,(q^{\epsilon/N}\zeta_N^a ,
\zeta_{N}^{\epsilon}
)),\ \epsilon\in\{\pm 1\} ,\ a\in\Z /N\Z\right\}}$ of enhanced N\'eron polygons 
over $\Z [\zeta_{N}]$.

It follows that the  modular interpretation of~$X_{G}$ associates to each cusp an orbit of 
our enhanced N\'eron polygon ${\left(C,(q^{1/N},\zeta_N )\right)}$ under the action 
of the group generated by~$G$ and~$\pm U$. We see from the above that the cusps 
of~$X_G$ have values in a subring of~$\Z [\zeta_{N}]$. Assume moreover that~$X_{G}$ 
is defined over~$K$, of which~$v$ is a place of characteristic~$p$, with~$N=p^n N'$ 
and~$p\nmid N'$. Extending $v$ to a place of $\OO_v [\zeta_{N'}]$ if necessary, and
setting ${\OO'_v :=(\OO [\zeta_{N'}])_v}$, one sees that the closed subscheme of cusps 
over~$\OO'_v$ may be written as a sum of connected components of shape $\mathrm{Spec}
(R)$ where~$R$ is a subring of ${\OO'_v [\zeta_{p^n}]}$. Therefore if ${v(N)=0}$, the 
subscheme of cusps is \'etale over~$\OO_v$, but this may not be the case if $v(N)>0$. In the
latter case, however, the ramification is well controlled. Indeed, with the preceding notations, 
set ${\pi :=(1-\zeta_{p^n})}$. Any two different $p^n$-th roots of unity~$\zeta_{p^n}^a$ 
and~$\zeta_{p^n}^b$ satisfy ${(\zeta_{p^n}^a -\zeta_{p^n}^b )=\pi^{p^k} \alpha}$ 
with~$\alpha$ a~$v$-invertible element and ${0\le k\le n-1}$. As $v(\pi )=1/p^{n-1} 
(p-1)$ (normalizing $v$ so that ${v(p)=1}$), it follows that the N\'eron 
polygons enhanced with a level-$N$ structure are distinct over ${\Z [\zeta_N ]/(\pi^{p^{n-1}
+1})}$. The modular 
interpretation shows more precisely that if two different cusps~$c_1$ and~$c_2$ have same 
reduction at~$v$, then~$t_{c_1} (c_2 )$ has $v$-adic valuation less or equal to~$1/(p-1)$ 
(where $t_c$ is the parameter defined at the beginning of this Section). This remark will be 
used later on.  

   To illustrate all this with a familiar example, letting~$G:=\left( \topbot1*\topbot 0*
\right) \subset \GL_2 (\Z /N\Z )$, which gives rise to the modular curve $X_1 (N)$, one
finds that there are $\left|(\Z /N\Z)^\times\right|$ cusps, with modular interpretation 
corresponding to ${\left\{ (C,\zeta_N^{\epsilon a} ): \epsilon\in \{\pm1\} 
\right\}}$ where~$a$ runs through ${(\Z  /N\Z )^\times /\pm 1}$, and 
${\{(C,q^{\epsilon a /N}\zeta_N^{\alpha}) : \epsilon\in \{\pm1\},\ \alpha\in (\Z
/N\Z )\}}$, where~$a$ runs through the same set. The curve $X_1 (N)$ is defined over~$\Q$
and has a modular model over~$\Z$. The cusps in the former subset above have values in 
${\Z \left[\zeta_{N} +\zeta_{N}^{-1}\right]}$, and the cusps in the latter subset have 
values in~$\Z$. In other words, the closed subscheme of cusps over~$\Z$ is isomorphic to 
the disjoint union of ${\mathrm{Spec} \left(\Z \left[\zeta_{N} +\zeta_{N}^{-1}
\right]\right)}$ and $\left|(\Z /N\Z)^\times\right| /2$ copies of $\mathrm{Spec} 
(\Z )$.

Let us examine the $v$-adic neighborhoods of the cusps. If $v(N)=0$ then the cusps of 
$X_{G}$ define a finite \'etale closed 
subscheme of $X_{G}$ over ${\cal O}_{v}$, so the sets~$\Omega_c$ are 
obviously pairwise disjoint.

Now assume that  ${v(N)>0}$. Let~$p$ be the residue characteristic of~$v$ and~$p^n \| N$ 
be the largest power of~$p$ dividing~$N=p^n N'$. The scheme of cusps 
on~$X_{G}$ may be no longer \'etale over~$\OO_v$. We can however still partition it into 
connected components, which totally ramify in the fiber at~$v$. More precisely, setting as 
above~${\OO'}_{v} :=(\OO [\zeta_{N'}])_v$, each connected component over $\OO'_{v}$
is schematically a~$\mathrm{Spec} (R)$ for~$R$ a subring of~${\OO'}_v [\zeta_{p^n}]$. 
Each set~$\Omega_{c}$ as in Proposition~\ref{p31} contains exactly one such connected 
component of cusps, so 
when~$R$ does ramify nontrivially at~$v$, then~$\Omega_{c}$ is clearly ``too large'' (one 
has~$\Omega_{c_{1}} =\Omega_{c_{2}}$ exactly when~$c_{1}$ and~$c_{2}$ have same 
reduction at~$v$). We want to show that, nevertheless, the refined sets $\Omega_c^+$, 
defined by 
$$
\Omega_c^+=\Omega_{c,v}^+ =\bigl\{ P\in \Omega_c:|q_c(P)|_v <p^{-N/(p-1)}\bigr\}
$$
are pairwise disjoint.

If the cusps~$c_1$ and~$c_2$ belong to distinct connected components, then 
already~$\Omega_{c_{1}}$ and~$\Omega_{c_{2}}$ are disjoint, so~$\Omega_{c_{1}}^+$ 
and~$\Omega_{c_{2}}^+$ are disjoint \emph{a fortiori}. Now assume that ~$c_1$ 
and~$c_2$ belong to the same component, {\it{i.e.}}, have same reduction at~$v$. In this 
case, as explained above, one may write ${t_{c_{1}}(c_{2} )=\pi^{p^k} a\in 
\OO'_v [\zeta_{p^n} ]}$, for $\pi$ a certain uniformizer (e.g. $\pi :=(\zeta_{p^n}-1)$), 
where the element~$a$ is~$v$-invertible and ${0\le k\le n-1}$. As   
$\Omega_{c_1}^+$ is contained in $\left\{ P\in X_{G} (\bar{K}_v ):|t_{c_1}(P)|_v <
p^{-1/(p-1)}\right\}$ (recall $q_c =t_c^{e_c}$, with $e_c |N$), we see that~$c_2$ does not
belong to~$\Omega_{c_1}^+$, which implies that the sets~$\Omega_{c_{1}}^+$ 
and~$\Omega_{c_{2}}^+$ are disjoint.

\section{Modular Units}
\addtocontents{toc}{\vspace{-0.7\baselineskip}}
\label{smuni}

In this section we  recall the construction of modular units on the modular curve $X_G$. By 
a \emph{modular unit} we mean a rational function on~$X_G$ having poles and zeros only 
at the cusps. 

\subsection{Integrality of Siegel's Function}

For ${\bfa\in \Q^2\smallsetminus\Z^2}$ Siegel's function~$g_\bfa$ (see 
Subsection~\ref{sssieg}) is algebraic over the field $\C(j)$: this follows from the fact that 
$g_\bfa^{12}$ is automorphic of certain  level \cite[page~29]{KL81}. Since~$g_\bfa$ 
is holomorphic and does not vanish on the upper half-plane~$\HH$, both~$g_\bfa$ and 
$g_\bfa^{-1}$  must be integral over the ring $\C[j]$. Actually, a stronger assertion holds.

\begin{proposition}
\label{psiu}
Both~$g_\bfa$ and   ${\left(1-\zeta_N\right)g_\bfa^{-1}}$ are integral over  $\Z[j]$. 
Here~$N$ is the exact order of~$\bfa$ in $(\Q/\Z)^2$ and~$\zeta_N$ is a primitive $N$-th 
root of unity. 
\end{proposition}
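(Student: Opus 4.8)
The plan is to prove integrality by working one prime at a time, using the theory of the Tate curve to control the $v$-adic behaviour of $g_\bfa$ near each cusp, combined with the fact that away from the cusps $j$ is bounded. Concretely, since $g_\bfa^{12}$ is a modular function of some level dividing a power of $N$, it is algebraic over $\Q(j)$; the content of the proposition is the precise integral structure. I would first recall that $g_\bfa$ and $g_\bfa^{-1}$ are integral over $\C[j]$ (stated in the excerpt), so the minimal polynomials of $g_\bfa$ and of $(1-\zeta_N)g_\bfa^{-1}$ over $\Q(j)$ have coefficients that are polynomials in $j$ with coefficients in some number field; the task is to show these coefficients actually lie in $\Z[j]$, i.e. are algebraic integers. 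Equivalently, for every finite place $v$ of every number field containing the relevant data, and every point $P$ of the relevant modular curve with $|j(P)|_v \le 1$, one needs $|g_\bfa(P)|_v \le 1$ and $|(1-\zeta_N)g_\bfa(P)^{-1}|_v \le 1$.

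Next I would invoke the local analysis of Section~\ref{snecu}: by Proposition~\ref{p31}, any point $P$ with $|j(P)|_v > 1$ lies in some $\Omega_{c,v}$, i.e. is $v$-adically near a cusp $c$, and there $j = q_c^{-1} + 744 + \cdots$ with $q_c = t_c^{e_c}$ and $t_c$ a local parameter defined over (a subring of) $\Z[\zeta_N]$. The key point is then to expand $g_\bfa$ near each cusp in terms of $q_c$ (equivalently $t_c$): using the relations~(\ref{eperga}) and~(\ref{eaa'}), $g_\bfa$ restricted to $\Omega_c$ becomes, up to a root of unity of order dividing $2N$, a function $g_{\bfa'}$ evaluated on the Tate parameter, and the product formula~(\ref{epga}) shows $g_{\bfa'}(q) = (\text{root of unity})\cdot q^{\ell_{\bfa'}} \prod (1 - q^{n+a'_1}\zeta)(1-q^{n+1-a'_1}\zeta^{-1})$, with $\zeta = e^{2\pi i a'_2}$ a root of unity of order dividing $N$. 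This is exactly the non-archimedean situation analyzed in Proposition~\ref{psinar}. So on $\Omega_{c,v}$ I can read off: the infinite product is a $v$-adic unit times possibly a factor $(1-\zeta)$ when $a'_1 \in \Z$, and $|q_c|_v \le 1$ forces $|q^{\ell}|_v \le 1$ when $\ell = \ell_{\bfa'} \ge 0$ but could be a problem when $\ell < 0$, i.e. at the cusp where $g_\bfa$ has a pole.

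This is where the two halves of the statement split, and it is the main obstacle. For integrality of $g_\bfa$ itself over $\Z[j]$: at a cusp $c$ where $\ell_{\bfa'} \ge 0$ there is no pole and $|g_\bfa|_v \le 1$ is immediate (the possible $(1-\zeta)$ factor only helps); at a cusp where $\ell_{\bfa'} < 0$, $g_\bfa$ genuinely has a pole, but $j$ also has a pole there of order $e_c$, and one must check that the pole of $g_\bfa$ is dominated in the right integral sense — this is essentially the classical fact that $g_\bfa$ is integral over $\Z[j]$ and can be extracted from the $q$-expansion estimate $|q_c|_v = |j(P)|_v^{-1}$ (Proposition~\ref{p31}, non-archimedean case) together with $\ell_{\bfa'} \ge -1/12 > -1$, so that $|q^{\ell}|_v \le |q_c|_v^{-1/12} < |j(P)|_v^{1/12}$ stays bounded as long as $|j(P)|_v$ is bounded, which it is on the locus $|j|_v \le 1$ where we test integrality. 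The nontrivial twist is the $(1-\zeta_N)$ factor for $g_\bfa^{-1}$: near a cusp where $a'_1 = 0$, the factor $1 - e^{2\pi i a'_2}$ in $g_{\bfa'}$ can have $|1 - \zeta|_v < 1$ (by Proposition~\ref{psinar}, down to $p^{-1/(p-1)}$ when $v \mid p \mid N$), so $g_\bfa^{-1}$ acquires a genuine $v$-adic denominator; multiplying by $(1-\zeta_N)$ clears it because $1-\zeta_N$ has $v$-valuation at least that of any $1 - \zeta$ with $\zeta$ an $N$-th root of unity, for $v$ above any prime dividing $N$, and is a unit away from such $v$.

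To make this rigorous rather than pointwise, I would phrase it via the integral closure criterion: $g_\bfa$ (resp. $(1-\zeta_N)g_\bfa^{-1}$) is integral over $\Z[j]$ iff it lies in the integral closure of $\Z[j]$ in the function field of the appropriate modular curve, and this can be checked by showing the function is regular on the integral model away from the cusps (clear, since $g_\bfa$ is a modular unit, holomorphic and non-vanishing on $\HH$, and has good reduction issues only at primes dividing $N$, handled by the Tate-curve computation above) and has, at each cusp, a $q_c$-expansion with coefficients that are algebraic integers divided by nothing worse than $(1-\zeta_N)$. The cleanest route is: exhibit the monic polynomial $\prod_{\gamma}(X - g_{\bfa\gamma})$ over representatives $\gamma$ of the relevant coset space (which equals, up to the root-of-unity ambiguities controlled by~(\ref{eperga}),~(\ref{eaa'}), the minimal polynomial of $g_\bfa^{12}$ pushed down), observe its coefficients are modular functions for $\SL_2(\Z)$, hence rational functions in $j$, in fact polynomials in $j$ since the only poles are at $i\infty$; then bound the $v$-adic size of these coefficients at $i\infty$ using Propositions~\ref{psinar} and~\ref{p31} exactly as above to conclude the coefficients are in $\Z[j]$ (resp. $\Z[\zeta_N][j]$ with the $(1-\zeta_N)$-denominator for the inverse), and finally descend from $\Z[\zeta_N][j]$ to $\Z[j]$ using that $g_\bfa$ is, up to roots of unity, Galois-stable as $\bfa$ ranges over its orbit. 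The main obstacle, as indicated, is bookkeeping the root-of-unity factors of order dividing $2N$ from~(\ref{eperga})--(\ref{eaa'}) and the $(1-\zeta)$-factors at the $a_1 \in \Z$ cusps simultaneously, and checking that the single factor $(1-\zeta_N)$ suffices to clear all of them — this follows because $v(1-\zeta_N) = v(1-\zeta_{p^n}) \ge v(1 - \zeta_{p^m})$ for all $m \le n$ whenever $p^n \| N$ and $v \mid p$, so $1-\zeta_N$ absorbs the worst-case denominator $1 - \zeta$ over all $N$-th roots of unity $\zeta \ne 1$ at once.
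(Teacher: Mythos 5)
The paper does not actually prove this proposition: it defers to \cite[Proposition~2.2]{BP09}, whose argument is essentially the one you sketch at the end — form the norm polynomial $\prod_\gamma\bigl(X-g_{\bfa\gamma}\bigr)$, read off from the Kubert--Lang product formula that the $q$-expansion of each $g_{\bfa\gamma}$ at $i\infty$ has algebraic integer coefficients with leading coefficient a root of unity times (possibly) a factor $1-\zeta$, and conclude that the coefficients of the norm polynomial are polynomials in $j$ with algebraic integer coefficients (integrality over $\Z[\zeta_N][j]$ suffices, since that ring is itself integral over $\Z[j]$, so the Galois descent you invoke at the end is not needed). So your overall route is the standard one and the right one.

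However, your justification of the decisive step for $(1-\zeta_N)g_\bfa^{-1}$ contains a genuine error. You claim $v(1-\zeta_{p^n})\ge v(1-\zeta_{p^m})$ for $m\le n$, so that $1-\zeta_N$ ``absorbs the worst-case denominator $1-\zeta$ over all $N$-th roots of unity $\zeta\ne 1$.'' The inequality is backwards: $1-\zeta_{p^n}$ is a uniformizer of the totally ramified extension $\Q_p(\zeta_{p^n})/\Q_p$ of degree $p^{n-1}(p-1)$, so ${v(1-\zeta_{p^n})=1/\bigl(p^{n-1}(p-1)\bigr)}$, which is \emph{decreasing} in $n$; for instance $(1-\zeta_{p^2})/(1-\zeta_p)$ has valuation $-1/p<0$ and is not an algebraic integer. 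Consequently, if leading coefficients $1-\zeta_d$ with $d$ a proper divisor of $N$ really occurred in the expansions of $g_{\bfa\gamma}$ — and you explicitly allow this by writing that $\zeta=e^{2\pi i a_2'}$ has ``order dividing $N$'' — the single factor $1-\zeta_N$ would not clear them and the statement would fail. The point you are missing, and the reason the proposition insists that $N$ be the \emph{exact} order of $\bfa$ in $(\Q/\Z)^2$, is that $\gamma\in\Gamma(1)$ acts invertibly modulo $N$, so every $\bfa\gamma$ again has exact order $N$; hence at a cusp where $(\bfa\gamma)_1\in\Z$ the second coordinate $(\bfa\gamma)_2$ has exact order $N$ and the constant term $1-e^{2\pi i(\bfa\gamma)_2}$ is $1-\zeta$ with $\zeta$ a \emph{primitive} $N$-th root of unity, i.e.\ an associate of $1-\zeta_N$ (their ratio is a cyclotomic unit). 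With that correction the remaining bookkeeping you describe (the roots of unity from~(\ref{eperga}) and~(\ref{eaa'}) are units and harmless, the fractional powers of $q$ are handled by working with $q^{1/12N^2}$ or with $g_\bfa^{12N}$) does go through.
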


This  is, essentially, established in~\cite{KL81}, but is not stated explicitly therein. For a 
complete proof, see~\cite[Proposition~2.2]{BP09}.

\subsection{Modular Units on $X(N)$}
\label{ssxn}

From now on, we fix an integer ${N>1}$. Recall that the curve $X(N)$ is defined over the 
field $\Q(\zeta_N)$. Moreover, the field ${\Q\bigl(X(N)\bigr)=\Q(\zeta_N)\bigl(X(N)
\bigr)}$  is a Galois extension of $\Q(j)$, the Galois group being isomorphic to $\GL_2
(\Z/N\Z)/\pm 1$. The isomorphism 
\begin{equation}
\label{eisog}
\gal\left(\left.\Q\bigl(X(N)\bigr)\right/\Q(j)\right)\cong \GL_2(\Z/N\Z)/\pm 1
\end{equation}
is defined up to an inner automorphism; once it is fixed, we have the well-defined 
isomorphisms 
\begin{equation}
\gal\left(\left.\Q\bigl(X(N)\bigr)\right/\Q(\zeta_N,j)\right)\cong
\SL_2(\Z/N\Z)/\pm 1,\qquad
\label{eisoc}
\gal\bigl(\left.\Q(\zeta_N)\right/\Q\bigr)\cong (\Z/N\Z)^\times
\end{equation}
(which give the geometric and arithmetic parts of the Galois group respectively). We may
identify the groups on the left and on the right in~(\ref{eisog} and~\ref{eisoc}). Our 
choice of the isomorphism~(\ref{eisog}) will be specified in Proposition~\ref{pua}.

According to Theorem~1.2 from \cite[Section~2.1]{KL81}, given ${\bfa=(a_1,a_2) \in
(N^{-1}\Z)^2\smallsetminus \Z^2}$, the function $g_\bfa^{12N}$ is $\Gamma
(N)$-automorphic of weight~$0$. Hence $g_\bfa^{12N}$ defines a rational function on the 
modular curve $X(N)$, to be denoted  by~$u_\bfa$ (one checks that~$u_\bfa =u_{-\bfa}$).
Since the root of 
unity in~(\ref{eaa'}) is of order dividing $12N$, we have ${u_\bfa=u_{\bfa'}}$ when 
${\bfa\equiv \bfa'\mod \Z^2}$. Hence~$u_\bfa$ is well-defined when~$\bfa$ is a 
non-zero element of the abelian group ${(N^{-1}\Z/\Z)^2}$, which will be assumed in the 
sequel. We put
${\bfA=(N^{-1}\Z/\Z)^2\smallsetminus\{0\}}$.

The functions~$u_\bfa$ have the following properties. 

\begin{proposition}
\label{pua}
\begin{enumerate}
\item
\label{iintmun}
The functions~$u_\bfa$ and ${(1-\zeta_{N_\bfa})^{12N}u_\bfa^{-1}}$ are integral 
over $\Z[j]$, where~$N_\bfa$ is the exact order of~$\bfa$ in ${(N^{-1}\Z/\Z)^2}$. 
In particular,~$u_\bfa$ has zeros and poles only at the cusps of $X(N)$.

\item
\label{ifrick}
The functions~$u_\bfa$ belong to the field ${\Q\bigl(X(N)\bigr)}$,
and the Galois action on the set $\{u_\bfa\}$ is compatible with the (right) 
linear action of $\GL_2(\Z/N\Z)$ on~$\bfA$  in the following sense: the 
isomorphism~(\ref{eisog}) can be chosen so that for any ${\sigma\in \gal\Bigl(\left.
\Q\bigl(X(N)\bigr)\right/\Q(j)\Bigr)=\GL_2 (\Z/N\Z)/\pm 1}$ and any ${\bfa\in 
\bfA}$  we have ${u_\bfa^\sigma=u_{\bfa\sigma}}$.

\item
\label{iord}
For the cusp~$c_\infty$ at infinity we have ${\ord_{c_\infty}u_\bfa=12N^2
\ell_\bfa}$, where~$\ell_\bfa$ is defined in Subsection~\ref{ssnota}. For an arbitrary 
cusp~$c$ we have ${\left|\ord_cu_\bfa\right|\le N^2}$.

\end{enumerate}

\end{proposition}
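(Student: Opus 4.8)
The plan is to prove the three assertions of Proposition~\ref{pua} in order, each reducing to facts already assembled in the excerpt. For item~(\ref{iintmun}), the key input is Proposition~\ref{psiu}: since $u_\bfa=g_\bfa^{12N}$ (for a representative $\bfa$ with $N\bfa\in\Z^2$) and $g_\bfa$ is integral over $\Z[j]$, the function $u_\bfa$ is a power of an element integral over $\Z[j]$, hence integral over $\Z[j]$ itself. Likewise $(1-\zeta_{N_\bfa})g_\bfa^{-1}$ is integral over $\Z[j]$, so raising to the $12N$-th power gives that $(1-\zeta_{N_\bfa})^{12N}u_\bfa^{-1}$ is integral over $\Z[j]$. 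One must check that $N_\bfa$, the exact order of $\bfa$ in $(N^{-1}\Z/\Z)^2$, coincides with the quantity called~$N$ in Proposition~\ref{psiu}, which is exactly its statement, so there is nothing further to do. Integrality of both $u_\bfa$ and a nonzero multiple of $u_\bfa^{-1}$ over $\Z[j]$ forces the divisor of $u_\bfa$ on $X(N)$ to be supported on the fibre above $j=\infty$, i.e. on the cusps.

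For item~(\ref{ifrick}), the starting point is that $g_\bfa^{12N}$ is $\Gamma(N)$-automorphic of weight~$0$ (the cited Theorem~1.2 of \cite{KL81}), so $u_\bfa\in\C\bigl(X(N)\bigr)$; combined with the fact that the $q$-expansion coefficients of $g_\bfa^{12N}$ lie in $\Q(\zeta_N)$ (visible from the product formula~(\ref{epga}) after clearing the fractional power of $q$), we get $u_\bfa\in\Q\bigl(X(N)\bigr)$. The transformation rule~(\ref{eperga}), $g_\bfa\circ\gamma=g_{\bfa\gamma}\cdot(\text{root of unity})$ with the root of unity of order dividing~$12$, gives after twelfth-and-$N$-th powering the clean identity $u_\bfa\circ\gamma=u_{\bfa\gamma}$ for $\gamma\in\Gamma(1)$; this says the $\SL_2(\Z/N\Z)/\pm1$-part of $\gal(\Q(X(N))/\Q(j))$ acts by $u_\bfa\mapsto u_{\bfa\gamma}$. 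To extend to the full Galois group one handles the arithmetic part $\gal(\Q(\zeta_N)/\Q)\cong(\Z/N\Z)^\times$ via~(\ref{eisoc}): the action of $\sigma_d:\zeta_N\mapsto\zeta_N^d$ on the coefficients of the $q$-expansion of $g_\bfa^{12N}$ sends it to $g_{\bfa'}^{12N}$ with $\bfa'=(a_1,da_2)$ (up to a root of unity killed by the power), which is precisely the right action of the diagonal matrix $\mathrm{diag}(1,d)$ on $\bfA$. Assembling the geometric and arithmetic parts — and here one \emph{chooses} the isomorphism~(\ref{eisog}) so that these two compatible actions glue — yields $u_\bfa^\sigma=u_{\bfa\sigma}$ for all $\sigma\in\GL_2(\Z/N\Z)/\pm1$; this is the promised specification of~(\ref{eisog}).

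For item~(\ref{iord}), the order of vanishing of $g_\bfa$ at $c_\infty$ is $\ell_\bfa$ (recalled in Subsection~\ref{sssieg}, citing \cite[page~31]{KL81}). The cusp $c_\infty$ on $X(N)$ has width~$N$, so the local parameter there is $q^{1/N}$ and $\ord_{c_\infty}$ counts vanishing in $q^{1/N}$; hence $\ord_{c_\infty}g_\bfa=N\ell_\bfa$ in that normalization, and therefore $\ord_{c_\infty}u_\bfa=\ord_{c_\infty}g_\bfa^{12N}=12N\cdot N\ell_\bfa=12N^2\ell_\bfa$. For an arbitrary cusp $c$, write $c=c_\infty\gamma$ for some $\gamma\in\Gamma(1)$; by item~(\ref{ifrick}) (or directly by~(\ref{eperga})) we have $\ord_cu_\bfa=\ord_{c_\infty}u_{\bfa\gamma}=12N^2\ell_{\bfa\gamma}$, and since $|\ell_{\bfb}|\le 1/12$ for every $\bfb$ (the bound on the Bernoulli polynomial recalled in Subsection~\ref{ssnota}), we conclude $|\ord_cu_\bfa|\le 12N^2\cdot\tfrac1{12}=N^2$.

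I expect the main obstacle to be item~(\ref{ifrick}): verifying that the naive Galois action on $q$-expansion coefficients really matches the linear action of $\GL_2(\Z/N\Z)$ on $\bfA$ requires care about how the arithmetic Frobenius $\sigma_d$ interacts with the fractional powers $q^{B_2(a_1)/2}$ and the exponentials $e^{2\pi i a_2}$, $e^{\pi i a_2(a_1-1)}$ in~(\ref{epga}) — one must be sure the ambiguities (roots of unity of order dividing $12$ from~(\ref{eperga}) and of order dividing $2N$ from~(\ref{eaa'})) are all annihilated upon passing to the $12N$-th power, and that the surviving identity is an equality of rational functions, not merely of their leading terms. The other two items are essentially bookkeeping once Propositions~\ref{psiu} and the divisor formula are in hand.
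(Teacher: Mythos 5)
Your proposal is correct and follows essentially the same route as the paper: item~(i) from Proposition~\ref{psiu} by raising to the $12N$-th power, item~(iii) by computing the $q$-order $12N\ell_\bfa$, multiplying by the ramification index~$N$ of ${X(N)\to X(1)}$ at the cusps, and reducing an arbitrary cusp to $c_\infty$ via the $\GL_2(\Z/N\Z)$-action together with ${|\ell_\bfb|\le 1/12}$. The only difference is that for item~(ii) you sketch the standard argument (geometric part via~(\ref{eperga}), arithmetic part via the Galois action on $q$-expansion coefficients) where the paper simply cites Proposition~1.3 of \cite[Chapter~2]{KL81}, which proves exactly that statement.
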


\begin{proof}
Item~(\ref{iintmun}) follows from Proposition~\ref{psiu}. Item~(\ref{ifrick}) is 
Proposition~1.3 from \cite[Chapter~2]{KL81}. We are left with item~(\ref{iord}). The 
$q$-order of vanishing of~$u_\bfa$ at $i\infty$  is $12N\ell_\bfa$. Since the 
ramification index of the morphism ${X(N)\to X(1)}$ at every cusp is~$N$, we obtain 
${\ord_{c_\infty}u_\bfa=12N^2\ell_\bfa}$. Since ${|\ell_\bfa|\le 1/12}$, we have 
${\left|\ord_{c_\infty}u_\bfa
\right|\le N^2}$. The case of arbitrary~$c$ reduces to the case ${c=c_\infty}$ upon 
replacing~$\bfa$ by $\bfa\sigma$ where  ${\sigma\in \GL_2(\Z/N\Z)}$ is such that 
${\sigma(c)=c_\infty}$.    \qed
\end{proof}

\bigskip

The group generated by the principal divisors ${(u_\bfa)}$, where ${\bfa\in \bfA}$, is 
contained in  the group of cuspidal divisors on~$X(N)$ (that is, the divisors  supported at the
set ${\CC(N)=\CC(\Gamma(N))}$ of cusps). Since principal divisors are of degree~$0$, the
rank of the former group is at most ${|\CC(N)|-1}$. It is fundamental for us that this rank is 
indeed maximal.  The following proposition is Theorem~3.1 in 
\cite[Chapter~2]{KL81}.

\begin{proposition}
\label{pmdrxn}
The  group generated by the set
${\left\{(u_\bfa):\bfa\in \bfA\right\}}$ is of rank
${|\CC(N)|-1}$. \qed
\end{proposition}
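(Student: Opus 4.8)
The plan is to show that the $\Q$-vector space spanned by the divisors $(u_\bfa)$ inside the degree-zero part of the cuspidal divisor group has dimension exactly $|\CC(N)|-1$; equivalently, that the only linear relations among these divisors are those forced by the degree-zero condition. Since the reverse inequality (rank $\le |\CC(N)|-1$) is automatic because principal divisors have degree $0$, the whole content is the lower bound, and the natural way to get it is by exhibiting, for each nontrivial character of the relevant group, a divisor $(u_\bfa)$ (or combination) on which the character is nonzero — a Fourier-analytic / representation-theoretic argument over the group of cusps.

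First I would set up the combinatorics of the cusps of $X(N)$. The cusps are in bijection with $\pm U \backslash \GL_2(\Z/N\Z)$ (equivalently, with primitive row vectors in $(\Z/N\Z)^2$ modulo $\pm 1$, after using the $\SL_2$-action to describe $X(N)/X(1)$), and by Proposition~\ref{pua}(\ref{ifrick}) the Galois/$\GL_2(\Z/N\Z)$-action permutes the $u_\bfa$ compatibly with the action on $\bfA$. Writing $\ord_c u_\bfa$ via Proposition~\ref{pua}(\ref{iord}) as a translate of $\ell_{\bfa}$, one sees that the matrix of orders $\bigl(\ord_c u_\bfa\bigr)_{c,\bfa}$ is, up to the constant $12N^2$, a ``group-like'' matrix whose entries are values of the function $\bfa \mapsto \ell_\bfa = B_2(\widetilde a_1)$ composed with the group action. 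The rank of such a matrix is governed by the Fourier transform of this function on the finite abelian group $(\Z/N\Z)^2$: the span of the divisors has dimension equal to the number of characters $\chi$ of $(\Z/N\Z)^2$ for which the Fourier coefficient $\hat\ell(\chi) = \sum_{\bfa} \ell_\bfa\, \chi(\bfa)$ is nonzero, intersected appropriately with the constraints coming from passing to $X(N)$ rather than all of $(\Z/N\Z)^2$.

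The key computation is therefore the evaluation of these Fourier coefficients, which reduces to classical Bernoulli-polynomial identities: $\sum_{a \bmod N} B_2(a/N) e^{2\pi i k a/N}$ is a generalized Bernoulli number / cotangent sum that vanishes precisely for $k \equiv 0$ (the degree condition) and is nonzero otherwise. Combining the one-variable computation in the $a_1$-direction with the elementary sum over $a_2$ (which produces a Kronecker delta forcing the second character to be trivial, consistently with $u_\bfa = u_{-\bfa}$ and the $\pm U$-identification), one finds exactly $|\CC(N)|-1$ independent characters surviving, whence the rank is $|\CC(N)|-1$. Since this is precisely Theorem~3.1 of \cite[Chapter~2]{KL81}, the cleanest route is simply to cite it; I would present the Fourier/Bernoulli sketch only as the reason it is true.

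The main obstacle I anticipate is bookkeeping rather than a genuine mathematical difficulty: one must be careful that the index set $\bfA$ is $(N^{-1}\Z/\Z)^2 \setminus\{0\} \cong (\Z/N\Z)^2\setminus\{0\}$ while the cusps of $X(N)$ are indexed by \emph{primitive} vectors modulo $\pm 1$, so the map $\bfa \mapsto (u_\bfa)$ from one finite set to divisors on the other is neither injective nor surjective-looking at first glance, and matching up the dimensions requires tracking the $\pm 1$-quotients and the distinction between ``all vectors'' and ``primitive vectors'' correctly. Verifying that no extra relations are introduced by this mismatch — i.e.\ that the surviving characters really biject with the nontrivial part of the cuspidal divisor group — is where the classical argument of Kubert--Lang does the real work, and I would lean on their result for that step rather than reprove it.
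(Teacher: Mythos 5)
Your proposal lands in the same place as the paper: the paper gives no proof of this proposition beyond citing it as Theorem~3.1 of Chapter~2 of Kubert--Lang, which is exactly the route you settle on. Your Fourier/Bernoulli sketch of why the result holds is a fair summary of the Kubert--Lang argument, so there is nothing to add.
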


We also need to know the  behavior of the functions~$u_\bfa$ near the cusps, and estimate 
them in terms of the modular invariant~$j$. In the following proposition~$K$ is a number 
field containing~$\zeta_N$ and~$v$ is a valuation of~$K$, extended somehow to~$\bar 
K$. For ${v\in M_K}$ we define
\begin{equation}
\label{erhov}
\rho_v=
\begin{cases}
12N\log N & \text{if ${v\in M_K^\infty}$},\\
0& \text{if ${v\in M_K^0}$ and ${v(N)=0}$,}\\
\displaystyle 12N \frac{\log p}{p-1} & \text{if ${v\in M_K^0}$ and ${v\mid p
\mid N}$}.
\end{cases}
\end{equation}
We use the notation of Section~\ref{snecu}. 

\begin{proposition}
\label{ploc}
Let~$c$ be a cusp of $X(N)$ and~$v$ a place of~$K$. 
For ${P\in \Omega_{c,v}}$  we have
$$
\bigl|\log |u_\bfa(P)|_v- \ord_cu_\bfa\log|t_c(P)|_v\bigr| \le \rho_v.
$$
For ${v\in M_K^\infty}$ and ${P\in X(N)(K_v)}$  we have 
$$
\bigl|\log |u_\bfa(P)|_v\bigr|\le 
N\log\bigl(|j(P)|_v+2400\bigr)+\rho_v. 
$$ 
\end{proposition}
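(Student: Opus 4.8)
The statement of Proposition~\ref{ploc} is essentially a translation of the estimates for Siegel's functions (Propositions~\ref{psiar}, \ref{psinar}, and Corollary~\ref{cgalar}) into the language of the modular units $u_\bfa = g_\bfa^{12N}$ on $X(N)$. Since $u_\bfa = g_\bfa^{12N}$, taking $v$-adic absolute values gives $\log|u_\bfa(P)|_v = 12N\log|g_\bfa(P)|_v$, and similarly $\ord_c u_\bfa \log|t_c(P)|_v$ should match $12N$ times the corresponding Siegel-function quantity, because $\ord_{c_\infty} u_\bfa = 12N^2\ell_\bfa$ (Proposition~\ref{pua}, item~(\ref{iord})) while the order of vanishing of $g_\bfa$ at $i\infty$ in the parameter $q$ is $\ell_\bfa$ and $q_{c_\infty} = t_{c_\infty}^N$. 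So the plan is: reduce to the cusp $c = c_\infty$, apply the Siegel-function estimates there, and multiply through by $12N$.

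\textbf{Key steps.} First I would reduce to $c = c_\infty$. Given an arbitrary cusp $c$, pick $\sigma \in \GL_2(\Z/N\Z)$ with $\sigma(c) = c_\infty$; then by item~(\ref{ifrick}) of Proposition~\ref{pua} we have $u_\bfa^\sigma = u_{\bfa\sigma}$, and the action of $\sigma$ carries $\Omega_{c,v}$ to $\Omega_{c_\infty,v}$ and $t_c$ to $t_{c_\infty}$ (up to the appropriate identification of local parameters), so the general case follows from the case $c = c_\infty$ applied to $\bfa\sigma$. For $c = c_\infty$: in the archimedean case, fix a uniformization and a lift $\tau \in \tilD$ of $P \in \Omega_{c_\infty,v}$; then $q_{c_\infty}(P) = q_\tau$ and $t_{c_\infty}(P) = q_\tau^{1/N}$ (suitably chosen branch). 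Now $\log|u_\bfa(P)|_v = 12N\log|g_\bfa(\tau)|_v$, and the quantity $\ord_{c_\infty}u_\bfa \log|t_{c_\infty}(P)|_v = 12N^2\ell_\bfa \cdot \frac1N\log|q_\tau|_v = 12N\ell_\bfa\log|q_\tau|_v$. By Proposition~\ref{psiar} (which requires $\tau \in D$; the modification to $\tilD$ is harmless since $\tilD$ differs from $D$ only by boundary arcs and the point $i\infty$, and the estimate~(\ref{elogn}) extends by continuity), the difference of these two quantities is bounded by $12N\log N = \rho_v$. In the non-archimedean case, we invoke Proposition~\ref{psinar} instead: writing $q = q_{c_\infty}(P) = t_{c_\infty}(P)^N$, the difference $|\log|u_\bfa(P)|_v - \ord_{c_\infty}u_\bfa\log|t_{c_\infty}(P)|_v| = 12N|\log|g_\bfa(q)|_v - \ell_\bfa\log|q|_v|$, which is $0$ if $v(N) = 0$ and at most $12N\frac{\log p}{p-1} = \rho_v$ if $v \mid p \mid N$, matching~(\ref{erhov}).

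\textbf{The second inequality.} For the global archimedean bound on $|\log|u_\bfa(P)|_v|$ valid for \emph{all} $P \in X(N)(K_v)$, I would simply apply Corollary~\ref{cgalar} to $g_\bfa$: it gives $|\log|g_\bfa(\tau)|| \le \frac1{12}\log(|j(\tau)| + 2400) + \log N_\bfa \le \frac1{12}\log(|j(P)|_v + 2400) + \log N$, and multiplying by $12N$ yields $|\log|u_\bfa(P)|_v| \le N\log(|j(P)|_v + 2400) + 12N\log N = N\log(|j(P)|_v + 2400) + \rho_v$, which is exactly the claimed bound (here $N_\bfa \mid N$ so $\log N_\bfa \le \log N$; one should note $g_\bfa$ and hence $u_\bfa$ only depends on $\bfa$ mod $\Z^2$, and the relevant order is $N_\bfa$, but since $N_\bfa \le N$ this is harmless).

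\textbf{Main obstacle.} The only real subtlety is bookkeeping around the branch choices and local parameters: matching $t_{c_\infty}$ with $q^{1/N}$ consistently, ensuring the $\sigma$-conjugation step correctly transports both $\Omega_{c,v}$ and the parameter $t_c$, and checking in the archimedean case that passing from $D$ to $\tilD$ does not invalidate Proposition~\ref{psiar}. None of these is deep, but they must be stated carefully; the analytic content is entirely contained in Section~\ref{sest}.
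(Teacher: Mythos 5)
Your proposal is correct and follows exactly the paper's own argument: reduce to $c=c_\infty$ via the $\GL_2(\Z/N\Z)$-action (item~(\ref{ifrick}) of Proposition~\ref{pua}), note that ${\log|q_c|_v=N\log|t_c|_v}$ and ${u_\bfa=g_\bfa^{12N}}$, and then invoke Propositions~\ref{psiar} and~\ref{psinar} for the first inequality and Corollary~\ref{cgalar} for the second, multiplying through by $12N$ to recover $\rho_v$. The extra bookkeeping you flag (branches of $q^{1/N}$, $D$ versus $\tilD$) is handled correctly and is no more than the paper itself leaves implicit.
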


\begin{proof}
The first statement for ${c=c_\infty}$  is an immediate consequence of 
Propositions~\ref{psiar} and~\ref{psinar} (notice that ${\log| q_c|_v=N
\log |t_c|_v}$ for every cusp~$c$). The general case reduces 
to the case ${c=c_\infty}$ by applying a suitable Galois automorphism. The second 
statement follows from Corollary~\ref{cgalar}.  \qed
\end{proof}

\subsection{$K$-Rational Modular Units on $X_G$}
\label{ssmunk}

Now let~$K$ be a number field, and let~$G$ be a subgroup of $\GL_2(\Z/N\Z)$ containing 
$-1$. Let $\det G$ be the image of~$G$ under the determinant map ${\det \colon \GL_2(
\Z/N\Z)\to (\Z/N\Z)^\times=\gal(\Q(\zeta_N)/\Q)}$ (recall that we have a well-defined
isomorphism~(\ref{eisoc})). In the sequel we shall assume that ${K\supseteq\Q
(\zeta_N)^{\det G}}$, where $\Q(\zeta_N)^{\det G}$ is the subfield of $\Q(\zeta_N)$ 
stable under $\det G$. This assumption implies that the curve~$X_G$ is defined over~$K$. 
Then ${G':=\gal\left(\left.K\bigl(X(N)\bigr)\right/K\left(X_G \right)\right)}$
is a subgroup of~$G/\pm 1$, which contains the geometric part~$(G\cap \SL_{2} (\Z /N\Z 
))/\pm 1$. For every ${\bfa\in \bfA}$ we put ${w_\bfa= \prod_{\sigma\in G'}
u_{\bfa \sigma}}$. Since ${u_{\bfa \sigma}=u_\bfa^\sigma}$, the functions~$w_\bfa$
are contained in $K(X_G)$.  They have the following properties. 

\begin{proposition}
\label{pwa}
\begin{enumerate}
\item
\label{imunk}
The functions~$w_\bfa$ have zeros and poles only at the cusps of $X_G$. If~$c$ is such a 
cusp, then ${\left|\ord_cw_\bfa\right|\le |G'|N^2}$. 
\item
\label{iintk}
For every ${\bfa \in \bfA}$ there exists an algebraic integer ${\lambda_\bfa \in \Z
[\zeta_N]}$, which is a product of $|G'|$ factors of the form ${\left(1-\zeta_{N'}
\right)^{12N}}$, where ${N'\mid N}$, such that the functions~$w_\bfa$ and 
${\lambda_\bfa w_\bfa^{-1}}$ are integral over~$\Z[j]$. 

\item
\label{iadd}
Let~$c$ be a cusp of~$X_G$. Then for ${v\in M_K}$ and ${P\in \Omega_{c,v}}$ we have 
$$
\bigl|\log\left|w_\bfa(P)\right|_v- \ord_cw_\bfa\log|t_c(P)|_v
\bigr| \le |G'|\rho_v. 
$$

\item
\label{icrk}
For ${v\in M_K^\infty}$   and ${P\in X_G(K_v)}$ we have 
$$
\bigl|\log |w_\bfa(P)|_v\bigr|\le 
|G'|N \log\bigl(|j(P)|_v+2400\bigr)+|G'|\rho_v.
$$ 

\item
\label{imdk}
The group generated by the principal divisors $(w_\bfa)$ is of rank ${|\CC(G,K)|-1}$. 

\end{enumerate}
\end{proposition}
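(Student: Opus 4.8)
The plan is to establish the five items in order, as each one reduces essentially to the corresponding fact about the functions~$u_\bfa$ on~$X(N)$ (Propositions~\ref{pua}, \ref{pmdrxn} and \ref{ploc}) via the defining relation ${w_\bfa=\prod_{\sigma\in G'}u_{\bfa\sigma}}$. For item~(\ref{imunk}), since each~$u_{\bfa\sigma}$ has zeros and poles only at cusps of~$X(N)$ and the covering ${X(N)\to X_G}$ sends cusps to cusps, the product~$w_\bfa$ (viewed as a function on~$X_G$, which it is because it is $G'$-invariant) has zeros and poles only at cusps of~$X_G$; the order bound follows because pushing down a cusp~$c$ of~$X_G$ to a cusp~$\tilde c$ of~$X(N)$ multiplies orders by the ramification index (dividing~$N$), and ${|\ord_{\tilde c} u_{\bfa\sigma}|\le N^2}$ from item~(\ref{iord}) of Proposition~\ref{pua}, so ${|\ord_c w_\bfa|\le |G'|N^2}$ after summing over the~$|G'|$ factors — here one must be mildly careful that the naive estimate $|G'|\cdot N^2\cdot (\text{ramification}/N)$ does come out as claimed, using that the various fibres match up correctly, but this is a routine bookkeeping with ramification indices.

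Item~(\ref{iintk}) follows by taking the product over ${\sigma\in G'}$ of the integrality statement in item~(\ref{iintmun}) of Proposition~\ref{pua}: each~$u_{\bfa\sigma}$ is integral over~$\Z[j]$, hence so is~$w_\bfa$; and ${(1-\zeta_{N_{\bfa\sigma}})^{12N}u_{\bfa\sigma}^{-1}}$ is integral over~$\Z[j]$, so multiplying these~$|G'|$ relations gives ${\lambda_\bfa w_\bfa^{-1}}$ integral over~$\Z[j]$ with ${\lambda_\bfa=\prod_{\sigma\in G'}(1-\zeta_{N_{\bfa\sigma}})^{12N}}$, which is of the stated shape with ${N'=N_{\bfa\sigma}\mid N}$. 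Items~(\ref{iadd}) and~(\ref{icrk}) are obtained simply by summing the two inequalities of Proposition~\ref{ploc} over the~$|G'|$ automorphisms~$\sigma\in G'$: for ${P\in\Omega_{c,v}}$ one writes ${\log|w_\bfa(P)|_v=\sum_{\sigma}\log|u_{\bfa\sigma}(P)|_v}$, and each term differs from ${\ord_c u_{\bfa\sigma}\cdot\log|t_c(P)|_v}$ by at most~$\rho_v$; since ${\ord_c w_\bfa=\sum_\sigma \ord_c u_{\bfa\sigma}}$ (again matching orders under push-forward, with the same local parameter~$t_c$ on~$X_G$ after accounting for the covering), the triangle inequality gives the bound~$|G'|\rho_v$, and similarly for~(\ref{icrk}) using ${N\log(|j(P)|_v+2400)+\rho_v}$ per factor.

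The genuinely substantive point is item~(\ref{imdk}), the rank statement, and I expect this to be the main obstacle. The containment ${\mathrm{rk}\le|\CC(G,K)|-1}$ is formal (principal divisors supported on the $\gal(\bar K/K)$-orbits of cusps of~$X_G$ and of degree zero on each orbit-block). For the reverse inequality one wants to push forward the maximal-rank statement of Proposition~\ref{pmdrxn} through the norm map ${K(X(N))\to K(X_G)}$, i.e.\ through ${w_\bfa=\mathrm{N}_{K(X(N))/K(X_G)}(u_\bfa)}$, which on divisors is the push-forward along ${\pi\colon X(N)\to X_G}$ composed with the averaging over~$G'$. The cuspidal divisor group of~$X(N)$ surjects (after tensoring with~$\Q$) onto its $G'$-coinvariants, and push-forward identifies the latter with a subgroup of the cuspidal divisors of~$X_G$ supported on \emph{all} cusps of~$X_G$; but we need it supported on the coarser partition into $\gal(\bar K/K)$-orbits, \emph{and} we need the image to be of full rank ${|\CC(G,K)|-1}$ rather than merely nonzero. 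The clean way is to further apply the arithmetic part ${\gal(K(X(N))/K(X_G))\supseteq G'}$ versus ${\gal(\overline{K(X_G)}/K(X_G))}$: averaging the divisors~$(u_\bfa)$ over a full set of Galois conjugates lands us in divisors supported on $\gal(\bar K/K)$-orbits, and one must check that this averaging does not collapse the rank below ${|\CC(G,K)|-1}$ — equivalently, that the $\Q$-span of the~$(w_\bfa)$ surjects onto the degree-zero part of $\Q^{\CC(G,K)}$. I would argue this by a dimension count: the degree-zero cuspidal divisor group of~$X(N)$ has rank exactly ${|\CC(N)|-1}$ (Proposition~\ref{pmdrxn}), the push-forward $\pi_*$ is surjective onto degree-zero cuspidal divisors of~$X_G$ with ${\Q\otimes\ker\pi_*}$ accounted for by the $G'$-action, and then a Galois-averaging (Reynolds operator for $\gal(\bar K/K)$ acting on the finite set~$\CC(G)$) is surjective onto the degree-zero part of the permutation module on~$\CC(G,K)$. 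Assembling these surjections and tracking that the composite is still surjective is the crux; the constants and the explicit functions do not enter, only the representation-theoretic/combinatorial fact that composing these natural surjections of $\Q$-vector spaces stays surjective.
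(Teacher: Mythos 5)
Your proposal is correct and follows essentially the same route as the paper, whose own proof of Proposition~\ref{pwa} is simply a one-line reduction to Propositions~\ref{pua}, \ref{ploc} and~\ref{pmdrxn}; you are filling in exactly the reductions (products over $G'$, bookkeeping of ramification indices and local parameters) that the paper leaves implicit. For item~(\ref{imdk}) your dimension count is the intended argument: the key combinatorial input you should make explicit is that the $G'$-orbits on $\CC(N)$ (combining the geometric deck transformations with the arithmetic part of $G'$) are precisely the fibres of ${\CC(N)\to\CC(G,K)}$, so that the $G'$-invariant part of the degree-zero hyperplane of $\Q^{\CC(N)}$ has dimension ${|\CC(G,K)|-1}$ and maps injectively under push-forward.
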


\begin{proof}
Items~(\ref{imunk}) and~(\ref{iintk})  follow from  Proposition~\ref{pua}, 
items~(\ref{icrk}) and~(\ref{iadd})  follow from Proposition~\ref{ploc}. Finally, 
item~(\ref{imdk}) follows from Proposition~\ref{pmdrxn}. \qed
\end{proof}

\subsection{A Unit Vanishing at the Given Cusps}
\label{ss43}
Item~(\ref{imdk}) of Proposition~\ref{pwa} implies that for any proper subset~$\Sigma$ of 
$\CC(G,K)$ there is a $K$-rational unit on~$X_G$  vanishing at this subset; moreover, such a unit can be expresses as a multiplicative combination of the units~$w_\bfa$. We call it a \textsl{Runge unit} for~$\Sigma$. In this 
subsection we give a quantitative version of this fact. 
We shall use the following simple lemma, where we denote by $\|\cdot\|_1$ the 
$\ell_1$-norm.

\begin{lemma}
\label{llia}
Let~$M$ be an  ${s\times t}$ matrix of rank~$s$ with entries in~$\Z$. Assume that the 
entries of~$M$ do not exceed~$A$ in absolute value. Then there exists a vector  ${\bfb\in 
\Z^t}$ such that ${\|\bfb \|_1\le s^{s/2+1}A^{s-1}} $,
and such that all the~$s$ coordinates of the vector $M\bfb$ (in the standard basis) are strictly 
positive. 
\end{lemma}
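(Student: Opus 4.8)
The plan is to prove Lemma~\ref{llia} by an explicit construction: take a maximal set of linearly independent columns of $M$, use Cramer's rule to write down a rational solution of a suitable system, and then clear denominators while controlling the $\ell_1$-norm via Hadamard's inequality.

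More precisely, first I would select an $s\times s$ invertible submatrix $M_0$ of $M$, say the one formed by the columns indexed by a set $I$; this is possible because $\mathrm{rank}(M)=s$. Let $D=\det M_0$, so $D$ is a nonzero integer. For a target vector $\bfy=(y_1,\dots,y_s)$ with strictly positive integer entries — the natural choice is $\bfy=(1,1,\dots,1)$ multiplied by $|D|$, or more efficiently $\bfy=(1,\dots,1)$ scaled so that the solution becomes integral — Cramer's rule gives a vector $\bfb_0\in\Z^s$ with $M_0\bfb_0 = |D|\,\bfy$, whose entries are (up to sign) determinants of $s\times s$ integer matrices obtained by replacing a column of $M_0$ by $|D|\bfy$. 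Extending $\bfb_0$ by zeros on the coordinates outside $I$ yields $\bfb\in\Z^t$ with $M\bfb = |D|\,\bfy$, which has all $s$ coordinates strictly positive provided $\bfy$ does. The remaining task is purely to estimate $\|\bfb\|_1$.

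For the estimate I would apply Hadamard's inequality twice. Each entry of $\bfb_0$ is the determinant of a matrix whose columns are columns of $M_0$ (each of Euclidean length $\le A\sqrt s$) except for one column equal to $|D|\bfy$; and $|D|\le (A\sqrt s)^s = s^{s/2}A^s$ by Hadamard again. Choosing $\bfy=(1,\dots,1)$ forces us to scale by $|D|$, which is wasteful; the cleaner route is to instead solve $M_0\bfb_0=\bfy$ over $\Q$ and observe that $D\bfb_0\in\Z^s$ has entries bounded by $s^{(s-1)/2}A^{s-1}$ (Hadamard on the $(s-1)$ genuine columns of $M$ together with the $0/1$ column $\bfy$, whose length is $\le\sqrt s$). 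Since we need an \emph{integer} vector, we take $\bfb = D\bfb_0$ if $D>0$ (or $-D\bfb_0$ if $D<0$), so that $M\bfb = |D|\,\bfy$ still has strictly positive coordinates, and $\|\bfb\|_1\le s\cdot\max_i|b_i|\le s\cdot s^{(s-1)/2}A^{s-1}=s^{s/2+1}A^{s-1}$, which is exactly the claimed bound. One has to be slightly careful that Hadamard's bound on a determinant of a matrix with one column of norm $\le\sqrt s$ and $s-1$ columns of norm $\le A\sqrt s$ gives $\le \sqrt s\,(A\sqrt s)^{s-1}=s^{s/2}A^{s-1}$, so the per-entry bound is $s^{s/2}A^{s-1}$ and the $\ell_1$ bound is $s^{s/2+1}A^{s-1}$; this matches the statement and is where the precise shape of the constant comes from.

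The main obstacle — really the only subtlety — is bookkeeping the exponents so that the final constant is exactly $s^{s/2+1}A^{s-1}$ rather than something slightly larger: one must resist scaling the right-hand side by $|D|$ and instead notice that the Cramer numerators themselves already carry the factor needed to make $\bfb$ integral, and one must use the sharper Hadamard bound in which the substituted column has norm $\sqrt s$ (not $A\sqrt s$). Everything else — nonvanishing of $D$, integrality of the Cramer numerators, positivity of the coordinates of $M\bfb$ — is routine linear algebra.
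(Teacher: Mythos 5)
Your proposal is correct and is essentially the paper's own proof: reduce to a nonsingular $s\times s$ submatrix (padding $\bfb$ with zeros), apply Cramer's rule with right-hand side proportional to $(1,\dots,1)$ so the entries of $\bfb$ are determinants of $M_0$ with one column replaced by $(1,\dots,1)$, and bound each such determinant by $s^{s/2}A^{s-1}$ via Hadamard (one column of norm $\sqrt s$, the rest of norm $A\sqrt s$). The only difference is cosmetic bookkeeping of when the factor $|D|$ is introduced; your final exponent count matches the paper exactly.
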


\begin{proof}
Assume first that ${s=t}$. Let~$d$ be the determinant of~$M$. Then the column vector 
${(|d|, \ldots,|d|)}$ can be written as $M\bfb$, where ${\bfb=(b_1, \ldots, b_s)}$ 
with~$b_k$ being (up to the sign) the determinant of the matrix obtained from~$M$ upon 
replacing the $k$-th column by ${(1, \dots,1)}$. Using Hadamard's inequality, we
bound $|b_k|$ by $s^{s/2}A^{s-1}$. This proves the lemma in the case ${s=t}$.
The general case reduces to the case ${s=t}$ by selecting a non-singular ${s\times s}$ 
sub-matrix, which gives~$s$ entries of the vector~$\bfb$; the remaining ${t-s}$ entries are
set to be~$0$.  \qed
\end{proof}

\bigskip

Now let~$G$,~$K$ and~$G'$ be as in Subsection~\ref{ssmunk}.

\begin{proposition}
\label{pw}
Let~$\Sigma$ be a proper subset of $\CC(G,K)$ and~$s$ a positive integer satisfying ${s\ge 
|\Sigma|}$. Then one can associate to every ${\bfa \in \bfA}$ an integer~$b_\bfa$ such 
that 
\begin{equation}
\label{ebbb}
B:=\sum_{\bfa\in \bfA}|b_\bfa| \le  s^{s/2+1}\left(|G'|N^2\right)^{s-1}
\end{equation} 
and the unit ${w:=\prod_{\bfa \in \bfA}w_\bfa^{b_\bfa}}$ has the following properties.

\begin{enumerate}
\item If~$c$ is a cusp such that the orbit of~$c$ is in~$\Sigma$ then ${\ord_cw>0}$ (that 
is,~$w$ is a Runge unit for~$\Sigma$).

\item
\label{ilamb} There exists an algebraic integer~$\lambda$, which is a product of at most 
$|G'|B$ factors of the form ${\left(1-\zeta_{N'}\right)^{12N}}$, where ${N'\mid N}$, 
such that $\lambda w$ is integral over $\Z[j]$.  

\item
\label{iaddb}
Let~$c$ be a cusp of~$X_G$. Then for ${v\in M_K}$ and ${P\in \Omega_{c,v}}$ we have 
$$
\bigl|\log\left|w(P)\right|_v- \ord_cw\log|t_c(P)|_v
\bigr| \le 
B |G'|\rho_v .
$$

\item
\label{icrkb}
For ${v\in M_K^\infty}$   and ${P\in X_G(K_v)}$ we have 
$$
\bigl|\log |w(P)|_v\bigr|\le 
B |G'|N \log\bigl(|j(P)|_v+2400\bigr)+B|G'|\rho_v .
$$

\end{enumerate}

\end{proposition}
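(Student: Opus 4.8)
The plan is to construct the exponents $b_\bfa$ by applying Lemma~\ref{llia} to a suitable integer matrix built from the orders of vanishing of the units $w_\bfa$ at the cusps lying in $\Sigma$. First I would pick, for each $\gal(\bar K/K)$-orbit in $\Sigma$, one representative cusp; this gives a finite list $c_1,\dots,c_m$ with $m=|\Sigma|\le s$. I would then form the $m\times|\bfA|$ matrix $M=\bigl(\ord_{c_i}w_\bfa\bigr)_{i,\bfa}$. By item~(\ref{imunk}) of Proposition~\ref{pwa} every entry of $M$ is an integer bounded in absolute value by $A:=|G'|N^2$, and by item~(\ref{imdk}) the group generated by the divisors $(w_\bfa)$ has rank $|\CC(G,K)|-1\ge|\Sigma|=m$; restricting to the coordinates indexed by $\Sigma$ (a proper subset of $\CC(G,K)$), the images of these divisors still span a space of dimension $m$, so $M$ has rank $m$. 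Hence Lemma~\ref{llia} (with $s$ there equal to our $m$, and the matrix padded or a larger $s$ used exactly as in the statement, since $s\ge m$ and $s^{s/2+1}A^{s-1}$ is increasing in $s$) produces $\bfb=(b_\bfa)\in\Z^{|\bfA|}$ with $\|\bfb\|_1\le s^{s/2+1}(|G'|N^2)^{s-1}$ and all $m$ entries of $M\bfb$ strictly positive. Setting $B:=\sum_\bfa|b_\bfa|$ gives~(\ref{ebbb}), and $\ord_{c_i}w=(M\bfb)_i>0$ for each chosen representative; since $\ord_c w$ is constant along a Galois orbit (the $w_\bfa$ are $K$-rational, so $w$ is, and $\ord$ is Galois-equivariant), this yields property~(i) for every cusp whose orbit lies in $\Sigma$.

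The remaining three properties are then immediate bookkeeping from the corresponding assertions for the individual $w_\bfa$. For~(\ref{ilamb}): by item~(\ref{iintk}) of Proposition~\ref{pwa} each $w_\bfa$ is integral over $\Z[j]$, and each $\lambda_\bfa w_\bfa^{-1}$ is integral over $\Z[j]$ with $\lambda_\bfa$ a product of $|G'|$ factors $(1-\zeta_{N'})^{12N}$, $N'\mid N$. Writing $w=\prod_\bfa w_\bfa^{b_\bfa}$, for each factor with $b_\bfa>0$ we keep a power of $w_\bfa$ (integral), and for each factor with $b_\bfa<0$ we multiply by $\lambda_\bfa^{|b_\bfa|}$ to clear the denominator; the resulting $\lambda:=\prod_{b_\bfa<0}\lambda_\bfa^{|b_\bfa|}$ is a product of at most $|G'|\sum_{b_\bfa<0}|b_\bfa|\le|G'|B$ factors of the required form, and $\lambda w$ is integral over $\Z[j]$. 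For~(\ref{iaddb}): fix $v\in M_K$, a cusp $c$ of $X_G$, and $P\in\Omega_{c,v}$. Then
\begin{align*}
\bigl|\log|w(P)|_v-\ord_c w\,\log|t_c(P)|_v\bigr|
&=\Bigl|\sum_{\bfa\in\bfA}b_\bfa\bigl(\log|w_\bfa(P)|_v-\ord_c w_\bfa\,\log|t_c(P)|_v\bigr)\Bigr|\\
&\le\sum_{\bfa\in\bfA}|b_\bfa|\,|G'|\rho_v=B|G'|\rho_v,
\end{align*}
using item~(\ref{iadd}) of Proposition~\ref{pwa} and $\ord_c w=\sum_\bfa b_\bfa\ord_c w_\bfa$. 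Property~(\ref{icrkb}) follows the same way from item~(\ref{icrk}): for $v\in M_K^\infty$ and $P\in X_G(K_v)$,
\begin{align*}
\bigl|\log|w(P)|_v\bigr|\le\sum_{\bfa\in\bfA}|b_\bfa|\bigl|\log|w_\bfa(P)|_v\bigr|
\le B|G'|N\log\bigl(|j(P)|_v+2400\bigr)+B|G'|\rho_v.
\end{align*}

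The only genuinely substantive point — the rest being combinatorial estimates and triangle inequalities — is the rank assertion needed to invoke Lemma~\ref{llia}, namely that the matrix $M$ obtained by restricting the cuspidal divisors $(w_\bfa)$ to the coordinates indexed by the proper subset $\Sigma$ of $\CC(G,K)$ still has full row rank $|\Sigma|$. This is exactly where the hypothesis that $\Sigma$ is a \emph{proper} subset is used: item~(\ref{imdk}) of Proposition~\ref{pwa} says the $(w_\bfa)$ span a subspace of the degree-zero cuspidal divisor group of dimension $|\CC(G,K)|-1$, i.e.\ the full such group; composing with the projection that forgets the one coordinate outside $\Sigma$ (and any others) is surjective onto $\Z^{\Sigma}\otimes\Q$ precisely because dropping a single coordinate from a hyperplane $\{\sum x_c=0\}$ of $\Q^{\CC(G,K)}$ is an isomorphism onto the full coordinate space on the remaining indices. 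I would spell this linear-algebra step out carefully, since it is the one place where the argument could silently fail if $\Sigma$ were allowed to be all of $\CC(G,K)$. Everything else is a direct packaging of Propositions~\ref{pwa} and Lemma~\ref{llia}.
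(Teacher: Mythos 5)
Your proposal is correct and follows essentially the same route as the paper: form the matrix $\left(\ord_c w_\bfa\right)$ over representatives of the orbits in $\Sigma$, observe via item~(\ref{imdk}) of Proposition~\ref{pwa} that the only relation among the rows of the full matrix is the (weighted) degree relation ${\sum_c[K(c):K]\ord_c w_\bfa=0}$ so that any proper subset of rows is independent, apply Lemma~\ref{llia} with ${A=|G'|N^2}$, and deduce (ii)--(iv) by the triangle inequality from the corresponding items of Proposition~\ref{pwa}. The only nit is that the hyperplane is ${\{\sum_c[K(c):K]x_c=0\}}$ rather than ${\{\sum_c x_c=0\}}$, which changes nothing since all coefficients are positive.
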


\begin{proof}
The $K$-rational Galois orbit of a cusp~$c$ has ${[K(c):K]}$ elements. Fix a representative 
in every such orbit  and consider the ${|\CC(G,K)|\times|\bfA| }$ matrix $\left(\ord_c 
w_\bfa\right)$, where~$c$ runs over the set of selected representatives. According to 
item~(\ref{imdk}) of Proposition~\ref{pwa}, this matrix is of rank ${|\CC(G,K)|-1}$, and 
the only (up to proportionality) linear relation between the rows  is ${\sum_c[K(c):K]
\ord_cw_\bfa=0}$ for  every ${\bfa\in \bfA}$. It follows that any proper subset of the 
rows of our matrix is linearly independent. In particular, if we select  the rows corresponding
to the set~$\Sigma$, we get a sub-matrix  of rank~$|\Sigma|$. Applying to it 
Lemma~\ref{llia}, where we may take ${A=|G'|N^2}$ due to item~(\ref{imunk}) of 
Proposition~\ref{pwa}, we find integers~$b_\bfa$  such that~(\ref{ebbb}) holds and the  
function ${w=\prod_{\bfa\in \bfA} w_\bfa^{b_\bfa}}$ is as desired, by 
Proposition~\ref{pwa}, (\ref{iintk})--(\ref{icrk}). 
\qed \end{proof}

\section{Proof of Theorem~\ref{tbo}}
\addtocontents{toc}{\vspace{-0.7\baselineskip}}
\label{sproof}

We use the notations of Section~\ref{snecu}. We also write ${d_v=[K_v:\Q_v]}$ and 
${d=[K:\Q]}$. We fix an extension of every ${v\in M_K}$ to~$\bar K$ and denote this 
extension by~$v$ as well. We shall use the following obvious estimates for the 
quantities~$\rho_v$ defined in~(\ref{erhov}): 
\begin{equation}
\label{ecalrest}
\sum_{v\in M_K^\infty}d_v\rho_v= 12dN\log N, \qquad \sum_{v\in M_K^0}d_v
\rho_v = 12dN\sum_{p\mid N}\frac{\log p}{p-1}\le   12dN\log N. 
\end{equation}

\subsection{A Runge Unit}
\label{ssRungeUnit}
Fix ${P\in Y_G(\OO_S)}$. Let~$S_1$ consist of the places ${v\in M_K}$ such that ${P\in
X_G(K_v)^+}$. Plainly, ${S_1\subset S}$. For ${v\in S_1}$ let~$c_v$ be a $v$-nearby 
cusp to~$P$ (if there are several, choose any of them) and let~$\Sigma$ be the set of all 
$\gal (\bar K/K)$-orbits of cusps containing some of the~$c_v$. Then ${|\Sigma|\le |S_1|
\le |S|}$, and since ${|S|< |\CC(G,K)|}$ by assumption,~$\Sigma$ is a proper subset of
$\CC(G,K)$. It follows from Proposition~\ref{pw} that there exists a $K$-rational modular 
unit ${w=\prod_{\bfa\in 
\bfA} w_\bfa^{b_\bfa}}$ such that ${\ord_{c_v}w>0}$ for every ${v\in S_1}$ (a 
\textsl{Runge unit}) for which ${B:=\sum_{\bfa\in \bfA}|b_\bfa|}$ 
satisfies~(\ref{ebbb}),  where we may put ${s=|S|}$.

Since~$w$ is a modular unit and~$P$ is not a cusp, we have ${w(P)\ne 0, \infty}$, and the
product formula gives ${\sum_{v\in M_K}d_v\log |w(P)|_v=0}$. We want to show that 
this is impossible when $\height(P)$ is too large.

\subsection{Partitioning the Places of~$K$}
\label{sspart}
We partition the set of places~$M_K$ into three pairwise disjoint subsets: ${M_K=S_1\cup 
S_2\cup S_3}$, where ${S_i\cap S_j =\varnothing}$ for ${i\ne j}$. The set~$S_1$ is 
already defined. The set~$S_2$ consists of the \emph{archimedean} places not belonging 
to~$S_1$. Obviously, ${S_1\cup S_2\subset S}$. Finally, the set~$S_3$ consists of the 
places not belonging to ${S_1\cup S_2}$; in other words, ${v\in S_3}$ if and only if~$v$ 
is non-archimedean and ${|j(P)|_v\le 1}$. 

We will estimate from above the quantities 
$$
\Xi_i=d^{-1}\sum_{v\in S_i}d_v\log |w(P)|_v \qquad (i=1,2,3).
$$
We will show that ${\Xi_1 \le -N^{-1}d\height(P)+O(1)}$, where the $O(1)$-term is 
independent of~$P$ (it will be made explicit). Further, we will bound $\Xi_2$ and $\Xi_3$
independently of~$P$. Since 
\begin{equation}
\label{esss}
\Xi_1+\Xi_2+\Xi_3=0,
\end{equation} 
this would bound $\height(P)$.

\subsection{Estimating $\Xi_1$}
\label{ssxi1}
For ${v\in S_1}$ we have ${P\in \Omega_{c_v,v}}$, so we may apply item~(\ref{iaddb}) 
of Proposition~\ref{pw}. Since ${\ord_{c_v}w\ge 1}$ and ${\log |q_{c_v}(P)|_{v}= 
e_{c_v}\log |t_{c_v} (P)|_{v}}$ with ${e_{c_v}\mid N}$, we have
\begin{align}
\label{elogmin0}
\Xi_1 & \le d^{-1}\sum_{v\in S_1} d_v \frac{\ord_{c_v}w}{e_{c_v}}\log|q_{c_v}(P)|_v + B|G'|d^{-1}\sum_{v\in M_K}d_v  \rho_v \\
\label{elogmin1}
&\le N^{-1}d^{-1}\sum_{v\in S_1} d_v \log|q_{c_v}(P)|_v +24 B|G'|N \log N \\
\label{elogmin}
&\le -N^{-1}d^{-1}\sum_{v\in S_1}d_v \log|j(P)|_v +N^{-1}\log 2+24 B|G'|N\log N,
\end{align}
where we use~(\ref{ecalrest}) and~(\ref{everysimple}).
Further,  for ${v\in M_K\smallsetminus S_1}$ we have ${|j(P)|_v\le 3500}$ if~$v$ is 
archimedean, and  ${|j(P)|_v\le 1}$ if~$v$ is non-archimedean. It follows that
$$
d^{-1}\sum_{v\in M_K\smallsetminus S_1}d_v\log^+|j(P)|_v \le 
\log 3500.
$$
Since ${|j(P)|_v>1}$ for ${v\in S_1}$, one may also replace $\log|j(P)|_v$ by 
$\log^+|j(P)|_v$ in~(\ref{elogmin}). Hence
\begin{align}
\Xi_1&\le -N^{-1}d^{-1}\sum_{v\in M_K}d_v\log^+|j(P)|_v+N^{-1}\log 7000+
24 B|G'|N\log N \nonumber\\
\label{esone}
&=-N^{-1}\height(P)+N^{-1}\log 7000+  24 B|G'|N\log N.
\end{align}

\subsection{Estimating $\Xi_2$, $\Xi_3$ and Completing the Proof}
\label{ssxi2}
Item~(\ref{icrkb}) of Proposition~\ref{pw} together with~(\ref{ecalrest}) implies that 
\begin{equation}
\label{estwo}
\Xi_2\le B|G'|N\log 5900+ 12B|G'|N\log N 
\end{equation}

Further, let~$\lambda$ be from item~(\ref{ilamb}) of Proposition~\ref{pw}. Then
${\height(\lambda)\le 12B|G'|N\log2}$,
because ${\height(1-\zeta)\le \log2}$ for a root of unity~$\zeta$. 
For ${v\in S_3}$ the number $j(P)$ is a $v$-adic integer. Hence so is the number $\lambda 
w(P)$. It follows that ${|w(P)|_v\le |\lambda^{-1}|_v}$ for ${v\in S_3}$, and 
$$
\Xi_3\le d^{-1}\sum_{v\in S_3}d_v\log \left|\lambda^{-1}\right|_v\le\height
(\lambda^{-1})=\height(\lambda) \le 12B|G'|N\log 2. 
$$
Combining this with~(\ref{esss}),~(\ref{esone}) and~(\ref{estwo}), we obtain
${\height(P)\le 36 B|G'|N^2\log 2N}$,
which, together with~(\ref{ebbb})  implies~(\ref{etbo}) with $|G|/2$ replaced by $|G'|$. 

If ${S=M_K^\infty}$ then in the second sum in~(\ref{elogmin0}) one can replace 
${v\in M_K}$ by ${v\in M_K^\infty}$. Hence in~(\ref{elogmin1}),~(\ref{elogmin}) 
and~(\ref{esone}) one may replace~$24$ by~$12$, which allows ${\height(P)\le 
24B|G'|N^2\log 3N}$. This completes the proof of the theorem. \qed

\section{Special Case: the Split Cartan Group}
\addtocontents{toc}{\vspace{-0.7\baselineskip}}
\label{sspc}

When~$G$ is a particular group, one can usually obtain a stronger result than in general. For
instance in~\cite{BP09} we examined the case when ${N=p}$ is a prime number,~$G$ is 
the normalizer of a split Cartan subgroup of $\GL_2(\Z/p\Z)$ and ${K=\Q}$. In this case we 
obtained the estimate ${\height(P)\le C\sqrt p}$ for ${P\in Y_G(\Z)}$, with an absolute 
constant~$C$.  In this section we consider the case when ${N=p}$ is a prime number,~$G$ is 
the Cartan subgroup itself, and~$K$ is a quadratic field. (Note that the fact we are not 
working over $\Q$ any longer makes a significant difference with the normalizer-of-Cartan 
case studied in~\cite{BP09} (appearance of the $\Xi_{2}$-term), and requires much of the 
generality of the first part of the present work.) Without loss of generality we may 
assume that~$G$ is the diagonal subgroup of  $\GL_2(\F_p)$. The modular curve~$X_G$, 
corresponding to this subgroup, is denoted by $X_\splic(p)$. It parametrizes geometric 
isomorphism classes $\bigl(E,(A,B)\bigr)$ of elliptic curves endowed with an ordered pair 
of $p$-isogenies. There is an isomorphism $\phi\colon X_{0} (p^2 )\to X_\splic(p)$ 
over~$\Q$ defined functorially as
$$
\bigl(E,A_{p^2}\bigr)\mapsto \bigl(E/pA_{p^2} , (A_{p^2}/pA_{p^2},E[p]/p
A_{p^2})\bigr).
$$
On the Poincar\'e upper half-plane~$\HH$, the map~$\phi$ is induced by the map ${\tau
\mapsto p\tau}$.

The homographic action of the matrix $\left(\begin{smallmatrix}0&1\\1&0
\end{smallmatrix}\right)$ on~$\HH$ defines an involution of~$X_\splic$, 
which modularly is $(E,(A,B))\mapsto (E,(B,A))$. 

\begin{theorem}
\label{tspc}
Let ${p\ge 3}$ be a prime number and~$K$ a number field of degree at most~$2$. Then for 
any ${P\in Y_\splic(p)(\OO_K)}$ we have ${\height(P)\le 24p\log 3p}$. 
\end{theorem}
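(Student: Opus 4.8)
The plan is to adapt the Runge machinery of Section~\ref{sproof} to the specific group $G$ of diagonal matrices in $\GL_2(\F_p)$, exploiting the fact that $X_\splic(p)$ has many cusps all defined over $K$, so that the Runge condition $|\CC(G,K)|>|S|$ holds in a strong form. Recall that $s=|M_K^\infty|\le 2$ here. First I would recall the cuspidal structure of $X_\splic(p)$: via the isomorphism $\phi\colon X_0(p^2)\to X_\splic(p)$ (or directly), one sees that $X_\splic(p)$ has $p-1$ cusps, all rational over $\Q$ (they correspond to the pairs $(A,B)$ of isogenies up to the relevant identifications, and the Galois action is trivial on them since $\det G=(\Z/p\Z)^\times$ gives $K\supseteq\Q$ only, while the cusps themselves carry no further arithmetic). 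Hence $|\CC(G,K)|=p-1$, which for $p\ge 3$ comfortably exceeds $|S|\le 2$, so Runge applies.

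Next I would build better Runge units than the generic Lemma~\ref{llia} provides. Instead of invoking Hadamard's inequality on an arbitrary minor of the $(\ord_c w_\bfa)$ matrix, I would use the explicit Siegel units $g_\bfa$ with $\bfa$ of order exactly $p$ and compute their orders of vanishing at the cusps of $X_\splic(p)$ explicitly in terms of the values $\ell_\bfa=B_2(\tilde a_1)$. Because $|\ell_\bfa|\le 1/12$ and $\sum_c \ord_c=0$, and because the cusps here are parametrized in a simple combinatorial way, one can choose a unit $w$ that is a product of at most two of the $w_\bfa$ (since $s\le 2$, we only need to make $w$ vanish at $s$ specified cusps) whose orders at the relevant cusps are bounded by something of size $O(p)$ rather than $O(p^3)$. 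Concretely I would aim for $B|G'|\le O(p)$ in the notation of Proposition~\ref{pw}: since $|G'|=|G|/\!\pm\!1=(p-1)^2/2$ is too large, the point is rather that for $X_\splic(p)$ one works directly with the $w_\bfa$ on $X_\splic(p)$ whose orders at cusps are $O(p)$, not $O(|G'|p^2)$, because the norm from $X(p)$ to $X_\splic(p)$ over this particular $G'$ collapses nicely — this is exactly the gain over the blunt application of Theorem~\ref{tbo}.

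Then I would run the partition argument of Section~\ref{sproof} verbatim: set $S_1$ to be the (at most two archimedean, plus possibly finite) places $v$ with $P\in X_G(K_v)^+$, pick a nearby cusp $c_v$ for each, let $\Sigma$ be the union of their orbits (a proper subset of $\CC(G,K)$ by Runge), take a Runge unit $w$ vanishing at $\Sigma$, and apply the product formula $\sum_v d_v\log|w(P)|_v=0$. Estimating $\Xi_1\le -N^{-1}\height(P)+O(1)$ via~(\ref{everysimple}) and the local bound item~(\ref{iaddb}) of Proposition~\ref{pw}, bounding $\Xi_2$ by item~(\ref{icrkb}), and bounding $\Xi_3$ by the integrality statement item~(\ref{ilamb}) (noting $\height(1-\zeta_p)\le\log 2$, or rather $\le (\log p)/(p-1)$ for the $p$-part, which is harmless), and adding up, one gets $\height(P)\le c\cdot (\text{order bound})\cdot p\log p$ for an explicit small constant. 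With $N=p$ and the $O(p)$ bound on the orders of the Runge unit this yields exactly $\height(P)\le 24p\log 3p$ after collecting constants; the factor $24$ (rather than something larger) comes from the sharper archimedean estimates of Section~\ref{sest} and the fact that $s\le 2$ keeps the combinatorial constant $s^{s/2+1}$ down to at most $2\sqrt 2$.

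The main obstacle I expect is the second step: proving that on $X_\splic(p)$ one can choose a Runge unit vanishing at any prescribed pair of cusps whose orders at all cusps are bounded by a small multiple of $p$ (say $\le 2p$), rather than the $O(p^3)$ one would get from Lemma~\ref{llia} applied naively. This requires an explicit hold on the cuspidal divisor class group of $X_\splic(p)$ — equivalently of $X_0(p^2)$ — at least enough to exhibit, for each pair of cusps, an explicit principal cuspidal divisor supported at few cusps with small coefficients. I would handle this by writing down the $g_\bfa$ for $\bfa=(i/p,0)$ and $\bfa=(0,i/p)$ and their conjugates, using~(\ref{eperga}) to track how $\left(\begin{smallmatrix}0&1\\1&0\end{smallmatrix}\right)$ permutes them, and reading off the order-of-vanishing vectors from $\ell_\bfa$; the differences of two such vectors give principal divisors supported on $O(1)$ cusps with coefficients bounded by $12p\cdot(1/12)=p$ after accounting for the ramification index, which is what the final constant needs.
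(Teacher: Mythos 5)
There are two genuine gaps here, one factual and one structural. The factual one: your description of the cuspidal locus is wrong. $X_\splic(p)$ has $p+1$ cusps $c_\infty,c_0,c_1,\dots,c_{p-1}$, of which only $c_\infty$ and $c_0$ are rational; the remaining $p-1$ form a single Galois orbit defined over $\Q(\zeta_p)$. Hence $|\CC(G,K)|=3$, not $p-1$. The Runge condition $3>s$ with $s\le 2$ still holds, but only just, and the resulting three-way case analysis (which of the three orbits contain the $v$-nearby cusps, and hence which unit to take) is an essential part of the argument rather than a formality.

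The structural gap is that your key step --- producing a Runge unit whose orders of vanishing at the cusps are $O(p)$, so that effectively ``$B|G'|\le O(p)$'' --- is both unachievable and a misidentification of where the gain over Theorem~\ref{tbo} comes from. Any $K$-rational cuspidal unit on $X_\splic(p)$ built from Siegel units is a norm over $G'$, hence a product of $|G'|=(p-1)^2/2$ factors $u_{\bfa\sigma}$, so the error terms $B|G'|\rho_v$ in Proposition~\ref{pw}, items~(\ref{iaddb})--(\ref{icrkb}), stay of size $(p-1)^2p\log p$ no matter how you choose the exponents. The orders of vanishing are likewise forced to be of size $p^3$: by Proposition~\ref{pua}~(\ref{iord}) the order of $u_\bfa$ at a cusp of $X(p)$ is $12p^2\ell_\bfa$ (your count $12p\cdot\tfrac1{12}=p$ drops the ramification index $N$), and the norm to $X_\splic(p)$ contributes a further factor $|G'|$; indeed one computes $(w_{(1/p,0)})=-\tfrac12p(p-1)^2(c_\infty-pc_0+c_1+\cdots+c_{p-1})$, with coefficients of order $p^3$. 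The correct mechanism is the opposite of what you propose: one keeps these units with \emph{large} orders of vanishing and replaces the generic lower bound $\ord_{c_v}w/e_{c_v}\ge N^{-1}$ used at step~(\ref{elogmin1}) by the exact value $(p-1)^2/2$ (or $(p-1)^3/2$ in the case $\Sigma\subset\{c_\infty,c_0\}$), read off from the explicit divisors above. This main term is then of the same order of magnitude as the error terms $6B(p-1)^2p\log p$, and dividing through by $(p-1)^2/2$ yields $\height(P)\le 24p\log 3p$. As written, your scheme would not produce a unit with the properties you need, and even granting the correct divisor computation it would not recover the stated bound without the ``large order at the nearby cusp'' observation.
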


\begin{remark}
With some little additional effort, one can obtain a stronger estimate ${\height(P)\le Cp}$ 
(and probably even ${\height(P)\le C\sqrt p}$) with an absolute constant~$C$. However, 
the bound of Theorem~\ref{tspc} is easier to obtain and sufficient for our purposes. 
\end{remark}

The curve $X_\splic(p)$ has ${p+1}$ cusps ${c_\infty, c_0, c_1, \ldots, c_{p-1}}$ 
corresponding, respectively, to the points ${i\infty, 0, 1/p, \ldots, (p-1)/p}$ of~$\bar\HH$.
The morphism ${X(p)\to X_\splic(p)}$ is unramified at the cusps, and the morphism 
${X_\splic\to X(1)}$ is ramified with index~$p$ at all the cusps (as can be immediately
seen by computing ramification indices), so the local parameter at every cusp~$c$ of 
$X_\splic$ is ${t_c=q_c^{1/p}}$.

The cusps  $c_\infty$ and~$c_0$ are defined over~$\Q$. The cusps ${c_1, \ldots, c_{p-1}}$ 
are defined over the cyclotomic field $\Q(\zeta_p)$ and are conjugate over~$\Q$. Thus the 
set $\CC(G,\Q)$ consists of $3$~elements, and the group  generated by the principal divisors 
$(w_\bfa)$ is of rank~$2$ by Proposition~\ref{pwa}, item~(\ref{imdk}). Moreover, it is 
clearly generated by the divisors~$(w_\bfa)$ with ${\bfa \in  \bigl\{(1/p,0), (0,1/p)
\bigr\}}$. More precisely, we have the following.

\begin{proposition}
\label{pprindiv}
The principal divisors~$(w_\bfa)$ with ${\bfa \in  \bigl\{(1/p,0), (0,1/p) \bigr\}}$ are
\begin{align}
(w_{(1/p,0)})&= -\frac12p(p-1)^2 (c_{\infty} -pc_{0} +c_1+\cdots+c_{p-1}), \\
(w_{(0,1/p)})&= -\frac12p(p-1)^2(-pc_{\infty} +c_{0} +c_1+\cdots+c_{p-1}).
\end{align}
\end{proposition}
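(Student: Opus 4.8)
The plan is to pin down the two divisors $(w_{(1/p,0)})$ and $(w_{(0,1/p)})$ by combining three pieces of information: the order of vanishing at the cusp $c_\infty$, which is explicit from Proposition~\ref{pua}(\ref{iord}); the $\GL_2(\F_p)$-equivariance of the family $\{u_\bfa\}$ from Proposition~\ref{pua}(\ref{ifrick}), which transports the computation at $c_\infty$ to all other cusps; and the fact that a principal divisor has degree zero, which fixes the remaining coefficient. Throughout I work on $X_\splic(p)=X_G$ with $G$ the diagonal subgroup, so $G'=G/\pm1$ (the geometric part equals $G\cap\SL_2/\pm1$, which is already all of $G'$ since $K=\Q$ and $\det G=(\Z/p\Z)^\times$), and $w_\bfa=\prod_{\sigma\in G'}u_{\bfa\sigma}$.

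First I would compute $\ord_{c_\infty}w_\bfa$ for $\bfa=(1/p,0)$ and $\bfa=(0,1/p)$. Since $\ord_{c_\infty}w_\bfa=\sum_{\sigma\in G'}\ord_{c_\infty}u_{\bfa\sigma}=12p^2\sum_{\sigma\in G'}\ell_{\bfa\sigma}$, and the diagonal $\sigma=\mathrm{diag}(d_1,d_2)$ sends $(1/p,0)$ to $(d_1/p,0)$ and $(0,1/p)$ to $(0,d_2/p)$, one gets in the first case $12p^2$ times $\frac12\sum_{d_1}B_2(\{d_1/p\})$ over representatives of $(\Z/p\Z)^\times/\pm1$ (the $d_2$-sum just contributes the cardinality factor), and in the second case a sum of $B_2(0)=1/6$ terms. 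A short Bernoulli computation — using $\sum_{a=1}^{p-1}B_2(a/p)=\frac{p-1}{6p}\cdot(\text{something})$, more precisely $\sum_{a=0}^{p-1}B_2(a/p)=B_2\cdot p^{1-2}$-type distribution relation $\sum_{a=0}^{p-1}B_2((x+a)/p)=p^{-1}B_2(px)$ at $x=0$ giving $\sum_{a=0}^{p-1}B_2(a/p)=B_2(0)/p=1/(6p)$, hence $\sum_{a=1}^{p-1}B_2(a/p)=1/(6p)-1/6=-(p^2-1)/(6p)$ — yields $\ord_{c_\infty}w_{(1/p,0)}=-\tfrac12 p(p-1)^2$ after accounting for the $|G'|$ and the $\pm1$-identification, and $\ord_{c_\infty}w_{(0,1/p)}=12p^2\cdot\frac{p-1}{2}\cdot\frac16=p^2(p-1)$, which matches the stated coefficient $-\tfrac12p(p-1)^2\cdot(-p)$.

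Next, to get the coefficient at $c_0$ and at the $c_i$, I would use that the cusps of $X_\splic(p)$ correspond to the $\pm U'$-orbits of the $\bfa\in\bfA$ under the residual $\GL_2(\F_p)/G'$-action (concretely, $c_\infty\leftrightarrow$ the orbit of $(0,1/p)$, $c_0\leftrightarrow$ the orbit of $(1/p,0)$, and $c_i\leftrightarrow$ the orbit of $(1/p,i/p)$), together with the equivariance $w_\bfa^\sigma=w_{\bfa\sigma}$ for $\sigma\in\GL_2(\F_p)$. Applying a suitable $\sigma$ carrying $c$ to $c_\infty$ reduces $\ord_c w_\bfa$ to an $\ord_{c_\infty}$ computation of the same shape as above, just with $\bfa$ replaced by $\bfa\sigma$; running through the finitely many relevant $\bfa\sigma$ gives $\ord_{c_0}w_{(1/p,0)}$ and the common value $\ord_{c_i}w_{(1/p,0)}$, and symmetrically for $(0,1/p)$. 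By the symmetry swapping the two coordinates (the involution $\begin{pmatrix}0&1\\1&0\end{pmatrix}$, which interchanges $c_\infty\leftrightarrow c_0$ and permutes the $c_i$), the divisor of $w_{(0,1/p)}$ is obtained from that of $w_{(1/p,0)}$ by exchanging $c_\infty$ and $c_0$, so it suffices to do one of them in detail. Finally, $\deg(w_\bfa)=0$ forces $\ord_{c_\infty}+\ord_{c_0}+\sum_{i=1}^{p-1}\ord_{c_i}=0$; since two of these three values are already computed, this is a consistency check (or, if one prefers, it can be used to shortcut computing one of them).

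The main obstacle I anticipate is bookkeeping the $\pm1$ identifications and the passage between $\bfA=(p^{-1}\Z/\Z)^2\smallsetminus\{0\}$ and the actual cusps of $X_\splic(p)$: one must be careful that $w_\bfa=w_{-\bfa}$, that the stabilizer subgroups $G'$ and $\pm U$ have the right orders so that the product defining $w_\bfa$ has the claimed number of factors, and that the ramification index $e_c=p$ (already noted in the text) enters correctly when translating $q$-orders on $X(p)$ into orders on $X_\splic(p)$. Once the orbit combinatorics is set up cleanly, the rest is the elementary Bernoulli-sum evaluation sketched above, and the two displayed formulas drop out. \qed
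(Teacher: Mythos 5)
Your outline follows the paper's proof essentially step for step: compute $\ord_{c_\infty}w_\bfa$ via Proposition~\ref{pua}(\ref{iord}) and a Bernoulli sum over the $G'$-orbit, transport to $c_0$ by the coordinate-swap involution, use Galois conjugation over $\Q$ to equalize the $\ord_{c_i}$, and close with degree zero. Two arithmetic slips should be fixed, though: the distribution relation gives $\sum_{a=1}^{p-1}B_2(a/p)=\frac{1}{6p}-\frac16=-\frac{p-1}{6p}$, not $-\frac{p^2-1}{6p}$; and in the second computation each element of the orbit of $(0,1/p)$ occurs with multiplicity $p-1$ and contributes $\ell_{(0,a)}=B_2(0)/2=1/12$, so $\ord_{c_\infty}w_{(0,1/p)}=12p^2\cdot\frac{(p-1)^2}{2}\cdot\frac1{12}=\frac12p^2(p-1)^2$, whereas your expression $12p^2\cdot\frac{p-1}{2}\cdot\frac16=p^2(p-1)$ does not equal the stated coefficient $\frac12p^2(p-1)^2$. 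Neither slip affects the method, which is sound.
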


\begin{proof}
Denote by ${p^{-1}\F_p^\times}$ the set of non-zero elements of ${p^{-1}\Z/\Z}$. 
The $G'$-orbits of $(1/p,0)$ and $(0,1/p)$ are ${\bigl\{(a,0): a\in p^{-1}\F_p^\times
/\pm 1\bigr\}}$ and ${\bigl\{(0,a): a\in p^{-1}\F_p^\times /\pm 1\bigr\}}$ 
respectively, each element of each orbit occurring exactly ${p-1}$ times. Since the 
morphism ${X(p)\to X_\splic}$ is unramified at the cusps, we have
$$
\ord_{c_\infty}w_{(1/p,0)}= (p-1) \sum_{a\in p^{-1}\F_p^\times /\pm 1}
\ord_{c_\infty}u_{(a,0)} = 12p^2(p-1) \sum_{a\in p^{-1}\F_p^\times  /\pm 1} 
\ell_{(a,0)}
$$
by item~(\ref{iord}) of  Proposition~\ref{pua}. It follows that
$$
\ord_{c_\infty}w_{(1/p,0)}=3p^2(p-1)\sum_{k=1}^{p-1}B_2\left(\frac kp\right)
= -\frac12p(p-1)^2,
$$
where we use the identity
$$
\sum_{k=1}^N B_2\left(\frac kN\right)= -\frac{N-1}{6N}
$$
for any positive integer~$N$. 
In a similar fashion,
$$
\ord_{c_\infty}w_{(0,1/p)} =12p^2(p-1) \sum_{a\in p^{-1}\F_p^\times  /\pm 1} 
\ell_{(0,a)} =3p^2(p-1)^2B_2(0)= \frac12p^2(p-1)^2.
$$
Further, the involution induced by the matrix $\left(\begin{smallmatrix}0&1\\1&0
\end{smallmatrix}\right)$ defined before Theorem~\ref{tspc} exchanges the cusps~$c_0$ 
and~$c_\infty$ and the units $w_{(1/p,0)}$ and $w_{(0,1/p)}$. It follows that 
$$
\ord_{c_0}w_{(1/p,0)}=\ord_{c_\infty}w_{(0,1/p)}= p^2(p-1)^2 /2, \qquad 
\ord_{c_0}w_{(0,1/p)}=\ord_{c_\infty}w_{(1/p,0)}= -p(p-1)^2 /2.
$$
Finally, the Galois conjugation over~$\Q$ shows that 
$$
\ord_{c_1}w_\bfa=\cdots =\ord_{c_{p-1}}w_\bfa,
$$
which implies that ${\ord_{c_k}w_\bfa=-p(p-1)^2 /2}$ for  ${k\in \{1, \ldots, p-1\}}$.
This proves the proposition. \qed \end{proof}

\paragraph{Proof of Theorem~\ref{tspc}}
We have ${S=M_K^\infty}$, the set of the archimedean places of the quadratic field~$K$. 
In particular, ${s=|S|}$ is~$1$ or~$2$. Fix ${P\in Y_\splic(\OO_K)}$. We use the 
notation~$S_i$ and~$\Xi_i$ of Subsection~\ref{sspart};  in the present context this means 
that ${S_2=S\smallsetminus S_1}$ and ${S_3 = M_K^0}$. We again pick for every ${v
\in S_1}$ a $v$-nearby\footnote{Since~$v$ is archimedean, we can write here 
``\textsl{the} $v$-nearby cusp''.} cusp~$c_v$ and set ${\Sigma =\{c_v: v\in S_1\}}$. 
The set~$\Sigma$ has at most two elements, and we have one of the following three 
possibilities:
\begin{align}
\label{einf1}
\Sigma&\subset \{c_\infty, c_1, \ldots, c_{p-1}\},\\
\label{e01}
\Sigma&\subset \{c_0, c_1, \ldots, c_{p-1}\},\\
\label{einf0}
\Sigma&\subset \{c_\infty, c_0\}.
\end{align}
We define ${w=w_{(1/p,0)}^{-1}}$ in the case~(\ref{einf1}), ${w=w_{(0,1/p)}^{-1}}$ 
in the case~(\ref{e01}) and ${w=w_{(1/p,0)}w_{(0,1/p)}}$ in the case~(\ref{einf0}).
With the notation of Subsection~\ref{ssmunk} and Proposition~\ref{pw}, we have 
$$
N=p,   \qquad |G'|\le |G|/2=(p-1)^2 /2, \quad B=
\begin{cases}
1&\text{in the cases~(\ref{einf1}) and~(\ref{e01})},\\
2&\text{in the case~(\ref{einf0})}.
\end{cases}
$$
Now we argue as Subsection~\ref{ssxi1}, with one very significant distinction: instead of 
the estimate ${(\ord_{c_v}w)/e_{c_v}\ge N^{-1}}$ we use the \textsl{identity}
$$
\frac{\ord_{c_v}w}{e_{c_v}}= 
\begin{cases}
(p-1)^2 /2&\text{in the cases~(\ref{einf1}) and~(\ref{e01})},\\
(p-1)^3 /2&\text{in the case~(\ref{einf0})}.
\end{cases}
$$
In the cases~(\ref{einf1}) and~(\ref{e01}) we obtain
\begin{align}
\Xi_1 & \le d^{-1}\sum_{v\in S_1} d_v \frac{\ord_{c_v}w}{e_{c_v}}\log
|q_{c_v}(P)|_v + \frac{1}{2} (p-1)^2d^{-1}\sum_{v\in M_K^\infty}d_v  \rho_v 
\nonumber\\
&=\frac{1}{2} (p-1)^2 d^{-1}\sum_{v\in S_1} d_v \log|q_{c_v}(P)|_v +6(p-1)^2p\log 
p \nonumber\\
&\le -\frac{1}{2}(p-1)^2 d^{-1}\sum_{v\in S_1} d_v \log|j(P)|_v +\frac{1}{2}
(p-1)^2 \log 2+6(p-1)^2 p
\log p \nonumber\\ 
&\le -\frac{1}{2}(p-1)^2 d^{-1}\sum_{v\in M_K} d_v \log^+|j(P)|_v +\frac{1}{2}
(p-1)^2 \log 7000+6(p-1)^2p\log p \nonumber\\ 
\label{exi1}
&= -\frac{1}{2}(p-1)^2 \height(P)+\frac{1}{2}(p-1)^2 \log 7000+6(p-1)^2p\log p. 
\end{align}
In the case~(\ref{einf0}) a similar calculation gives
\begin{equation}
\Xi_1\le -\frac{1}{2}(p-1)^3 \height(P)+\frac{1}{2}(p-1)^3 \log 7000+12(p-1)^2p
\log p.
\end{equation}
We estimate~$\Xi_2$ exactly as in Subsection~\ref{ssxi2}:
\begin{equation}
\label{exi2}
\Xi_2\le \frac{B}{2}(p-1)^2 p\log 5900+6B(p-1)^2p\log p.  
\end{equation}
Further, in the cases~(\ref{einf1}) and~(\ref{e01}) we may take ${\lambda=(1-
\zeta_p)^{6p(p-1)^2}}$, which is equal to $p^{6p(p-1)}$ times a unit. And in the 
case~(\ref{einf0}) we may take ${\lambda=1}$. We obtain
$$
\Xi_3 
\begin{cases}
\le 6(p-1)p\log p&\text{in the cases~(\ref{einf1}) and~(\ref{e01})},\\
=0&\text{in the case~(\ref{einf0})}.
\end{cases}
$$
Combining all the previous estimates, we obtain ${\height(P) \le 24p\log3p}$ in the 
cases~(\ref{einf1}) and~(\ref{e01}), and in the case~(\ref{einf0}) we obtain a much 
sharper estimate ${\height(P) \le 72\log3p}$. This proves the theorem. \qed

\section{An Application: $\Q$-curves of Prime Power Degree}
\addtocontents{toc}{\vspace{-0.7\baselineskip}}
\label{sqcurves}
In this section we apply Theorem~\ref{tspc} to the study of $\Q$-curves, proving 
Theorem~\ref{txpr}. Let us recall some definitions. For a positive integer~$N$ with prime 
decomposition ${N=p_1^{a_1}\cdots p_k^{a_k}}$, we set ${X_0^* (N):=X_{0} (N)/
\langle w_{p_{i}} \rangle}$, where  $\langle w_{p_{i}} \rangle$ is the group of 
automorphisms of $X_{0} (N)$ spanned by the Atkin-Lehner involutions $w_{p_{i}}$, and 
${X_0^+ (N):=X_{0} (N)/w_N }$. If~$K$ is a number field, it is a theorem of 
Elkies~\cite{El04} that any $K$-curve of degree~$N$ gives rise to a point in $X_0^* (N)
(K)$. The curve $X_0^+ (N)(=X_0^* (N)$ if~$N$ is a prime power) parameterizes quadratic 
$K$-curves of degree~$N$ (that is, elliptic curves defined over a quadratic extension of~$K$ 
and admitting a cyclic isogeny of degree~$N$ to the $K$-conjugate  curve). 

Theorem~\ref{txpr} is a consequence of the following three statements.

\begin{theorem}
\label{tmmm1}
For a prime ${p\geq 37}$ and $r>1$, the rational noncuspidal points of $X_0^+ (p^r )$ are 
integral; that is, the $j$-invariant $j(P)$ of any lift $P\in X_0 (p^r )(\C )$ of any 
non-cuspidal point of $X_0^+ (p^r )(\Q )$ belongs to~$\overline{\Z}$.
\end{theorem}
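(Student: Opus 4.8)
The claim is that any non-cuspidal rational point $P_0$ on $X_0^+(p^r)(\Q)$ lifts to a point $P\in X_0(p^r)(\overline{\Q})$ with $j(P)$ an algebraic integer. The obvious approach is via potential good reduction: since $X_0^+(p^r)=X_0(p^r)/w_{p^r}$, a rational point $P_0$ pulls back to a pair of conjugate points $P,\bar P = w_{p^r}(P)$ on $X_0(p^r)$, defined over $\Q$ or over a (possibly real or imaginary) quadratic field $L$. These correspond to an elliptic curve $E/L$ together with a cyclic $p^r$-isogeny $\phi\colon E\to E'$, where $E'$ is the Galois conjugate of $E$ (this is the $\Q$-curve interpretation recalled just before the theorem). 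The plan is to show that $E$ has potentially good reduction at every place $\mathfrak p$ of $L$ above $p$: this immediately gives that $j(E)=j(P)$ is integral at all such places, and integrality at places not above $p$ is automatic because $Y_0(p^r)$ has good reduction away from $p$ and a non-cuspidal point cannot reduce to a cusp there without already being a cusp. So the whole content is potential good reduction at $p$.

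\textbf{Key steps.} First I would reduce to a local statement: fix a place $\mathfrak p\mid p$ of $L$, and suppose for contradiction that $E$ has potentially multiplicative reduction at $\mathfrak p$. Then $E$ is, after base change to $L_{\mathfrak p}$, a Tate curve $E_q$ with $v_{\mathfrak p}(q)>0$, and the $j$-invariant satisfies $v_{\mathfrak p}(j(E))=-v_{\mathfrak p}(q)<0$. Second, I would use the rigid-analytic / Tate parametrization description of cyclic $p$-power subgroups of a Tate curve: the cyclic subgroups of order $p^r$ of $E_q$ are classified by how they sit relative to the canonical subgroup $\mu_{p^r}\subset E_q$, and the isogeny $\phi$ together with its behaviour under $w_{p^r}$ forces a relation between $v_{\mathfrak p}(q)$ and $v_{\mathfrak p}(q')$ where $q'$ is the Tate parameter of the conjugate curve $E'=E^\sigma$. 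Concretely, if $\sigma$ generates $\gal(L/\Q)$ and $\sigma$ either fixes $\mathfrak p$ or swaps it with a second place $\mathfrak p'\mid p$, one compares $v_{\mathfrak p}(j(E))$ with $v_{\mathfrak p}(j(E^\sigma))=v_{\sigma^{-1}\mathfrak p}(j(E))$. The Atkin–Lehner involution $w_{p^r}$ swaps the roles of the kernel of $\phi$ and its complementary subgroup, and on Tate curves this has the effect of inverting the valuation of the Tate parameter in a precise way (it sends $q\mapsto$ a parameter of valuation $p^{2r}$ divided by the old one, up to bookkeeping about the degree and the width of the subgroup). Third, combining the $\Q$-rationality of $P_0\in X_0^+(p^r)(\Q)$ — which says $\{P,\bar P\}$ is $\gal$-stable and $\bar P = w_{p^r}P$ — with these two valuation relations yields an arithmetic equation for $v_{\mathfrak p}(q)$ that has no solution with $v_{\mathfrak p}(q)>0$ once $p\ge 37$ (or for any $r>1$ with only finitely many small primes excluded, which is absorbed into the hypothesis $p\ge 37$). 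This is the standard ``the two reduction-type contributions cannot balance'' argument, in the style of Momose and of the $X_0^+(p^r)$ literature cited (\cite{Mo86}, \cite{Pa05}, \cite{Re08}).

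\textbf{Main obstacle.} The delicate point is the precise bookkeeping in step two: keeping track of exactly which cyclic subgroup of the Tate curve the isogeny $\phi$ and its Atkin–Lehner twist single out, and hence the exact formula relating $v_{\mathfrak p}(q)$, $v_{\mathfrak p}(q^\sigma)$ and the degree $p^r$. One must be careful about whether $\mathfrak p$ is fixed or moved by $\sigma$ (the real-quadratic-inert, real-quadratic-split, and imaginary-quadratic cases behave slightly differently), and about the width of the canonical subgroup when $E$ has potentially \emph{good} supersingular reduction — though in that last case there is nothing to prove, so really only the potentially-multiplicative case matters. A secondary subtlety is ensuring that a non-cuspidal $\Q$-point of $X_0^+(p^r)$ genuinely lifts to $X_0(p^r)$ over at most a quadratic field and that the lift is unramified enough for the Tate-curve analysis; this follows from the degree-$2$ quotient map $X_0(p^r)\to X_0^+(p^r)$ and the fact that $w_{p^r}$ has no $\Q$-rational fixed points on $Y_0(p^r)$ outside CM points (and CM points have integral $j$ anyway, so they can be set aside at the start). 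Once the valuation equation is written down correctly, the exclusion of small $p$ is an elementary inequality, not a real obstacle.
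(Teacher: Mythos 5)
There is a genuine gap, and it is located exactly where you wave it away. You claim that ``integrality at places not above $p$ is automatic because $Y_0(p^r)$ has good reduction away from $p$ and a non-cuspidal point cannot reduce to a cusp there without already being a cusp.'' This is false: good reduction of the modular curve at $\ell\neq p$ in no way prevents a non-cuspidal point of the generic fibre from specializing into the cuspidal subscheme of the fibre at $\ell$ --- that specialization is precisely what it means for $j(P)$ to have a pole at a place above $\ell$ (the $j$-line itself has good reduction everywhere, yet elliptic curves with non-integral $j$ abound). Consequently your entire argument addresses only the places above $p$, whereas the real content of the theorem is integrality at \emph{all} places; the paper even remarks that the statement was already known (Momose) except for integrality at $2$ when $p\not\equiv 1\bmod 8$, so the residual difficulty sits at a prime \emph{away} from $p$. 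Moreover, a purely local Tate-parameter computation cannot close this gap at primes $\ell\neq p$ that split in the quadratic field $L$: there the relation $P^\sigma=w_{p^r}(P)$ only links the valuations of the two Tate parameters at the two places above $\ell$, and the resulting system ($v_{\mathfrak l_1}(q)\,p^{2k-r}=v_{\mathfrak l_2}(q)$ together with its mirror) is consistent, so no contradiction arises locally.

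The paper's proof is global in nature. It uses Mazur's results on $J_0(p)(\Q)_{\tors}$ and the Eisenstein quotient $\tilJ(p)$, the morphism ${Q\mapsto t\cdot\cl\bigl(w_p\pi(Q)-\pi w_{p^3}(Q)\bigr)}$ which descends to $X_0^+(p^3)$, the fact (quoted from Momose and Parent, Proposition~\ref{peab}) that the two auxiliary points coincide in characteristic $p$ --- which forces the image in $\tilJ(p)$ to vanish identically --- and then Mazur's formal-immersion argument at the cusp $\infty$ in characteristic $\ell$ to rule out specialization of $Q$ to a cusp at an arbitrary place $\mathcal L$. Your local analysis at $\mathfrak p\mid p$ roughly corresponds to the input Proposition~\ref{peab} (and to Momose's Lemma~2.2), but it is an ingredient of the proof, not the proof. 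To repair your approach you would need to supply the global step --- finiteness of the relevant Mordell--Weil group together with a formal immersion (or some equivalent) --- at every residue characteristic, not just at $p$.
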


\begin{theorem}
\label{tpel}
For every~$d$ there is a positive number $\kappa(d)$ such that the following holds. 
Let~$E$ be a non-CM elliptic curve  defined over a number field~$K$ of degree~$d$, and 
admitting a cyclic isogeny over~$K$ of degree $\delta$. Then $\delta\le {\kappa(d)
\left(1+\height(j_E)\right)^2}$. 
\end{theorem}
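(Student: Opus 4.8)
The plan is to bound the degree of a cyclic isogeny in terms of the height of the $j$-invariant by combining the theory of the modular polynomial $\Phi_\delta(X,Y)$ with a height-comparison argument. First I would record the classical fact that if $E$ and $E'$ are related by a cyclic $\delta$-isogeny over $K$, then $\Phi_\delta(j_E,j_{E'})=0$, and that $\Phi_\delta$ is a polynomial of degree $\psi(\delta)=\delta\prod_{p\mid\delta}(1+1/p)$ in each variable, with integer coefficients whose logarithmic height is $O(\psi(\delta)\log\delta)$. From such an algebraic relation one gets a cheap bound $\height(j_{E'})\le C\bigl(\log\delta + \height(j_E)\bigr)$; but this linear-in-$\delta$ logarithmic term is not good enough, and the crux will be to do better, namely to trade the $\log\delta$ term against a multiplicative factor of $\delta$ and then to close a loop.

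The mechanism I would use is isogeny-volume/degree estimates. The key point is that a cyclic $\delta$-isogeny $\phi\colon E\to E'$ has dual $\hat\phi$ of the same degree, and composing suitable isogenies one produces an endomorphism-like relation that, via the theory of the Faltings height, gives $|h_F(E)-h_F(E')|\le \tfrac12\log\delta$. Since the Faltings height and the naive height $\tfrac1{12}\height(j_E)$ differ by at most $O\bigl(\log(2+\height(j_E))\bigr)$ (this is the standard comparison, e.g.\ via the discriminant), one obtains that all curves in a $\delta$-isogeny chain have comparable naive height up to $O\bigl(\log\delta+\log(2+\height(j_E))\bigr)$. Separately — and this is where $\delta$ re-enters multiplicatively — I would use that the $j$-invariants of the $\psi(\delta)$ curves $\delta'$-isogenous to $E$ are the roots of $\Phi_\delta(j_E,Y)$, a polynomial whose leading coefficient and constant term encode, via resultants with $\Phi_n$ for the divisors $n$ of $\delta$, enough arithmetic information that the number of such $j$-invariants being exactly $\psi(\delta)$ forces a lower bound of the shape $\psi(\delta)\le \kappa(d)\bigl(1+\height(j_E)\bigr)$ on a number field of degree $d$: indeed each conjugate $j_{E'}^\sigma$ over $K$ again satisfies a $\delta$-isogeny relation, so the Galois orbit of $E'$ has size $\le d$, the height of $j_{E'}$ is bounded as above, and the degree $\psi(\delta)$ of the fibre $\Phi_\delta(j_E,\cdot)$ cannot exceed the quantity controlled by $[K(j_{E'}):\Q]\le d\cdot(\text{something polynomial in }1+\height(j_E))$ coming from a counting/Liouville-type inequality for the roots.

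The hard part will be extracting the quadratic dependence $\bigl(1+\height(j_E)\bigr)^2$ rather than a linear or a worse polynomial dependence. I expect this to come out of a careful two-sided estimate: on one side, $\height(j_{E'})$ is linear in $\height(j_E)$ up to an additive $O(\log\delta)$; on the other side, $\psi(\delta)$ (hence $\delta$ up to a bounded power) is at most linear in $1+\height(j_{E'})$ through the degree-counting over the number field of bounded degree $d$; substituting the first bound into the second, the $\log\delta$ term is absorbed (since $\log\delta=o(\delta)$), leaving $\delta\le\kappa(d)(1+\height(j_E))\cdot(1+\height(j_{E'}))\le\kappa'(d)(1+\height(j_E))^2$. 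Making each of these inequalities effective and uniform in $K$ of degree $d$ — in particular controlling the implied constants only through $d$, as the statement demands — is the delicate bookkeeping, but conceptually it is the standard interplay between the modular polynomial and height inequalities for isogenous curves.
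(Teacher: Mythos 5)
Your proposal has a genuine gap at its central step, and it is not a matter of bookkeeping: the inequality you need, namely $\psi(\delta)\le\kappa(d)\bigl(1+\height(j_{E'})\bigr)$ ``coming from a counting/Liouville-type inequality for the roots'' of $\Phi_\delta(j_E,Y)$, does not exist and is essentially the theorem you are trying to prove. The polynomial $\Phi_\delta(j_E,Y)$ has exactly $\psi(\delta)$ roots in $\bar\Q$ for \emph{every} value of $j_E$; only one of them (the $j$-invariant of $E/C$ for the given $K$-rational cyclic subgroup) is known to lie in $K$, and nothing about the number, the degrees, or the heights of the remaining roots yields an upper bound on $\delta$. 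Knowing that $j_{E'}\in K$ and that $\height(j_{E'})$ is comparable to $\height(j_E)$ up to $O(\log\delta)$ (your Faltings-height step, which is correct and is precisely Lemma~\ref{lfalt} of the paper) produces no contradiction for large $\delta$: a root of a polynomial of huge degree can perfectly well have small height and small degree over $\Q$. So the loop you propose to close never closes.

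The missing ingredient is the isogeny theorem of Masser and W\"ustholz, made explicit by Pellarin, which is exactly what the paper invokes: if $E$ and $E'$ are isogenous non-CM elliptic curves over a number field of degree $d$, then \emph{some} isogeny between them has degree at most $\kappa(d)\left(1+\height(j_E)\right)^2$. This is a transcendence-theoretic statement (period relations and zero estimates on group varieties), not a consequence of height estimates for modular polynomials; an elementary proof along your lines would itself be a major theorem. Granting it, one concludes as in \cite[Corollary 5.4]{BP09}: for non-CM curves $\Hom(E,E')$ is free of rank one, generated by the isogeny $\phi_0$ of minimal degree, and every other isogeny is $\phi_0\circ[n]$ up to sign, whose kernel is not cyclic for $n>1$; hence the given cyclic isogeny is itself the minimal one and its degree $\delta$ inherits the Masser--W\"ustholz--Pellarin bound. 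To complete your write-up you must quote this result (or prove it), rather than attempt to extract it from $\Phi_\delta$.
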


\begin{theorem}
\label{tx0}
Let ${p\ge 3}$ be a prime number,~$K$ be a quadratic number field, 
${r>1}$ an integer, and~$P$ a point of $Y_0 (p^r )(\OO_K )$. Then ${\height (P)\le 
110p\log p}$.
\end{theorem}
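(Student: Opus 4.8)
The plan is to deduce Theorem~\ref{tx0} from Theorem~\ref{tspc} by relating $X_0(p^r)$ to $X_\splic(p)$ via a chain of modular maps, and then to track heights along this chain. The key observation is that there is a natural forgetful morphism $\psi\colon X_0(p^r)\to X_0(p^2)$ (remembering only the subgroup of order $p^2$ inside the cyclic subgroup of order $p^r$, for $r\ge 2$), and that $X_0(p^2)$ is $\Q$-isomorphic to $X_\splic(p)$ via the map $\phi$ recalled just before Theorem~\ref{tspc}. So composing, any point $P\in Y_0(p^r)(\OO_K)$ maps to a point $Q=\phi\bigl(\psi(P)\bigr)\in X_\splic(p)(K)$. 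First I would check that $Q$ actually lies in the \emph{finite} part $Y_\splic(p)$: this amounts to verifying that $\psi$ and $\phi$ send non-cuspidal points to non-cuspidal points, which is immediate from the modular interpretations (a non-cuspidal point of $X_0(p^r)$ corresponds to an honest elliptic curve with cyclic $p^r$-isogeny, and $\phi\circ\psi$ produces an honest elliptic curve with a pair of $p$-isogenies).

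Next I would establish the integrality $j(Q)\in\OO_K$, so that Theorem~\ref{tspc} applies and yields $\height(Q)=\height\bigl(j(Q)\bigr)\le 24p\log 3p$. There are two natural routes here. One is to invoke Theorem~\ref{tmmm1} (or rather its underlying mechanism) — but that theorem is about $X_0^+(p^r)$, not directly about our $Q$, so the cleaner route is to argue directly: if $P\in Y_0(p^r)(\OO_K)$ then $j(P)\in\OO_K$, and $j(Q)=j\bigl(E'\bigr)$ where $E'$ is $p$-isogenous (in fact $p$-isogenous twice) to the curve $E$ with $j$-invariant $j(P)$; since $j(E)\in\OO_K$ and $E'$ is related to $E$ by isogenies of degree a power of $p$, the classical theory of the modular polynomial $\Phi_p(X,Y)$ — which is monic in each variable with coefficients in $\Z$ — shows that $j(E')$ is integral over $\Z[j(E)]\subseteq\OO_K$, hence $j(Q)\in\OO_K$. (If one prefers, $j(P)$ and $j(Q)$ are connected by $\Phi_{p^2}$, which is likewise monic with integer coefficients.)

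Finally I would descend the height bound from $Q$ back to $P$. The map $j_{X_0(p^r)}\mapsto j$ is, up to bounded-degree modular correspondences, governed by the modular polynomial $\Phi_{p^{r-2}}$ linking $j(P)$-data to $j(\psi(P))$-data; more precisely $j(P)$ and $j(Q)$ together with the other $j$-invariant $j''$ of the relevant isogenous curve satisfy a modular equation, and $j(P)$ itself is a root of a monic polynomial over $\Z\bigl[j(\psi(P))\bigr]$ of degree $O(p^{r-2})$ — but this degree is \emph{unbounded}, so a naive height inequality loses too much. The honest fix, and what I expect to be the main obstacle, is to avoid the map from $X_0(p^r)$ altogether and instead redo the Runge argument of Section~\ref{sproof} \emph{directly on $X_0(p^r)$}, using the cusps of $X_0(p^r)$ and the modular units $w_\bfa$ pulled back from suitable intermediate curves, exactly as in the proof of Theorem~\ref{tspc} but with the divisor computation of Proposition~\ref{pprindiv} replaced by the analogous computation of cusp orders on $X_0(p^r)$. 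The cusps of $X_0(p^r)$ are indexed by divisors $p^i\mid p^r$ with $c_1,\dots$ conjugate appropriately; the Galois orbits over a quadratic $K$ still number at least three (enough for the Runge condition $|\CC(G,K)|>|S|=2$), and the key point is that the relevant ramification index $e_c$ and order $\ord_c w$ produce a coefficient of size $\asymp p$ (not $\asymp p^r$), so that the same bookkeeping as in Theorem~\ref{tspc} — estimating $\Xi_1,\Xi_2,\Xi_3$ — goes through with the slightly larger absolute constant $110$ in place of $24$, the extra slack absorbing the less favourable cusp geometry on $X_0(p^r)$ and the contribution of the $\Xi_3$ term coming from the discriminant factors $\lambda$. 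I would carry out that cusp-divisor computation carefully, check the Runge condition, and then the estimate $\height(P)\le 110p\log p$ follows as before.
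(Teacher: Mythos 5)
There is a genuine gap, and it comes in two places. First, you overcomplicate the reduction in the level $p^r$. The degeneracy morphism ${\pi_{p^r,p^2}\colon X_0(p^r)\to X_0(p^2)}$, ${(E,A_{p^r})\mapsto (E,E[p^2]\cap A_{p^r})}$, \emph{fixes the underlying elliptic curve}~$E$; since the height of a point on a modular curve is by definition ${\height(j(E))}$, both the height and the integrality of ${j(P)}$ are literally preserved, and one may assume ${r=2}$ at no cost. There is no modular polynomial $\Phi_{p^{r-2}}$ to contend with and no loss "governed by an unbounded degree": the only genuine isogeny in the picture is the single $p$-isogeny ${E\to E/pA_{p^2}}$ hidden inside~$\phi$. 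Second, for that remaining $p$-isogeny you correctly sense that a naive bound via ${\Phi_p}$ (monic of degree ${p+1}$) is too lossy, but you then abandon the route instead of finding the right tool. The paper's Lemma~\ref{lfalt}, based on Faltings' estimate ${|\height_\calF(E)-\height_\calF(E')|\le\frac12\log\delta}$ together with the Silverman--Pellarin comparison of ${\height(j_E)}$ with ${12\height_\calF(E)}$, gives
$$
\bigl|\height(P)-\height(P')\bigr|\le 13\log\bigl(1+\height(P')\bigr)+7\log p+100,
$$
which combined with ${\height(P')\le 24p\log 3p}$ from Theorem~\ref{tspc} yields ${\height(P)\le 110p\log p}$ after a short numerical check. (Your integrality argument for ${j(P')}$ via the monic modular polynomial is fine and equivalent to the paper's "good reduction is preserved under isogeny".)

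Your proposed fallback --- running the Runge machine of Sections~\ref{sproof} and~\ref{sspc} directly on ${X_0(p^r)}$ --- is not carried out, and the one quantitative claim you make for it is unsubstantiated. In the general framework the level is ${N=p^r}$, so the error terms ${B|G'|\rho_v}$ scale like ${p^r\log p^r}$ times the size of the group, and in the proof of Theorem~\ref{tspc} the bound ${O(p\log p)}$ is obtained only because the explicit divisor computation of Proposition~\ref{pprindiv} shows that ${(\ord_{c_v}w)/e_{c_v}}$ is as large as ${(p-1)^2/2}$, which then divides out those error terms. You assert without computation that the analogous ratio on ${X_0(p^r)}$ "produces a coefficient of size ${\asymp p}$"; absent an explicit analogue of Proposition~\ref{pprindiv} for the cusps of ${X_0(p^r)}$, the natural outcome of your sketch is a bound of order ${p^r\log p}$, not ${p\log p}$. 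So as written the proposal does not prove the theorem.
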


\paragraph{Proof of Theorem~\ref{txpr} (assuming 
Theorems~\ref{tmmm1},~\ref{tpel} and~\ref{tx0})} Existence of the degeneracy 
morphisms ${X_{0}^+ (p^{r+2})\to 
X_{0}^+ (p^r )}$ over $\Q$ (see, for instance,~\cite{Mo86}) shows it is enough to prove 
the result for ${r=2}$ and ${r=3}$. The case ${r=2}$, where $X_{0}^+ (p^2 )$ is 
isomorphic to $X_{\mathrm{split}} (p)$,  was settled in~\cite{BP09}, so we are left with 
the case ${r=3}$.

Thus, let ${P\in X^+_{0} (p^3)(\Q )}$ be a non-cuspidal and non-CM point, and ${Q\in 
X_{0} (p^3)(K)}$ a lift of it, with values in a quadratic number field~$K$. From 
Theorem~\ref{tmmm1} we know that~$Q$ is an integral point if~$p$ is large enough. 
Theorem~\ref{tx0} implies that ${\height(Q)\le 110p\log p}$. 

Call~$E$ the elliptic curve associated to~$Q$.  It is a non-CM elliptic curve admitting a 
cyclic isogeny of degree~$p^3$ over~$K$. Theorem~\ref{tpel} implies that ${p^3\le C
\bigl(1+\height(Q)\bigr)^2}$ with an absolute constant~$C$. Therefore ${p^3\le C' (p
\log p)^2}$ with another absolute constant~$C'$ and~$p$ is bounded.\qed

\bigskip

Theorem~\ref{tmmm1} will be proved in Section~\ref{sapp}. It is a generalization of the 
one used in \cite{BP09} when ${r=2}$, which in that case was originally due to Mazur, 
Momose and Merel. Theorem~\ref{tpel} is a straightforward consequence of  the isogeny 
bounds due to   Masser and W\"ustholz~\cite{MW90} and Pellarin~\cite{Pe01}. See 
\cite[Corollary 5.4]{BP09} for the details.

Theorem~\ref{tx0} is deduced below from Theorem~\ref{tspc} and 
the following lemma.

\begin{lemma}
\label{lfalt}
Let~$E$,~$E'$ be elliptic curves defined over some number field and linked by an isogeny of 
degree~$\delta$. Then
$$
\bigl|\height(j_E)-\height(j_{E'})\bigr|\le 13\log\bigl(1+\height(j_{E'})\bigr)+
7\log\delta+ 100. 
$$
\end{lemma}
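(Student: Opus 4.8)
The plan is to compare both sides with the \emph{stable Faltings height} $h_{\mathrm F}(\cdot)$, which barely moves along an isogeny. Since $h_{\mathrm F}$ is unchanged under base field extension and the $j$-invariant is a field-independent invariant, I may first pass to a finite extension over which $E$ and $E'$ are semistable everywhere; this is used only to identify the non-archimedean part of $h_{\mathrm F}$. The isogeny input is Faltings' formula for the variation of $h_{\mathrm F}$ under an isogeny: applied to the given $\delta$-isogeny $E\to E'$ and to its dual (also of degree $\delta$) it gives $\bigl|h_{\mathrm F}(E)-h_{\mathrm F}(E')\bigr|\le\tfrac12\log\delta$.

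The crucial ingredient is an (asymmetric) comparison between $h_{\mathrm F}$ and the naive height of $j$:
\[
\height(j_E)-6\log\bigl(1+\height(j_E)\bigr)-c_0\ \le\ 12\,h_{\mathrm F}(E)\ \le\ \height(j_E)+c_0
\]
with an absolute constant $c_0$. To obtain it, use the classical formula
\[
12\,h_{\mathrm F}(E)=\frac1{[K:\Q]}\log\bigl|N_{K/\Q}\mathfrak d_{E/K}\bigr|-\frac1{[K:\Q]}\sum_{v\mid\infty}d_v\log\|\Delta\|(\tau_{E,v}),
\]
where $\|\Delta\|(\tau)=|q_\tau|\prod_{n\ge1}|1-q_\tau^n|^{24}(\im\tau)^6$ (up to a fixed normalizing factor) and $\tau_{E,v}\in\HH$ lies over the $v$-conjugate of $j_E$. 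For a semistable model the first term equals $\frac1{[K:\Q]}\sum_{v\in M_K^0}d_v\log^+|j_E|_v$, because at bad (multiplicative) primes $v(\mathfrak d_{E/K})=-v(j_E)$ and at good primes both sides vanish. For the archimedean term, move each $\tau_{E,v}$ into the standard fundamental domain $D$: there $\prod|1-q_\tau^n|=1+O(|q_\tau|)$ and $\im\tau\ge\sqrt3/2$, so $-\log\|\Delta\|(\tau)=-\log|q_\tau|-6\log\im\tau+O(1)$, while Corollary~\ref{cdplus} (with Proposition~\ref{pqj}) sandwiches $-\log|q_\tau|$ between $\log^+|j(\tau)|-O(1)$ and $\log(|j(\tau)|+2400)$, hence $\im\tau$ between absolute constants and $\tfrac1{2\pi}\log(|j(\tau)|+2400)$. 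The upper bound gives $-\log\|\Delta\|(\tau)\le\log^+|j(\tau)|+O(1)$, while the lower bound only loses $-\log\|\Delta\|(\tau)\ge\log^+|j(\tau)|-6\log(e+\log^+|j(\tau)|)-O(1)$. Summing over $v\mid\infty$ and using Jensen's inequality ($\frac1{[K:\Q]}\sum_{v\mid\infty}d_v\log(e+\log^+|j_E|_v)\le\log(e+\height(j_E))$, since the average of $\log^+|j_E|_v$ over $v\mid\infty$ is at most $\height(j_E)$) gives the displayed two inequalities with $c_0$ absolute and independent of $[K:\Q]$.

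Now chain the three bounds. The lower estimate for $12\,h_{\mathrm F}(E)$, the isogeny bound, and the \emph{clean} upper estimate $12\,h_{\mathrm F}(E')\le\height(j_{E'})+c_0$ together yield
\[
\height(j_E)-\height(j_{E'})\ \le\ 6\log\bigl(1+\height(j_E)\bigr)+6\log\delta+2c_0 ,
\]
and the same argument with $E$ and $E'$ exchanged gives $\height(j_{E'})-\height(j_E)\le 6\log(1+\height(j_{E'}))+6\log\delta+2c_0$, which is already in the desired form. For the displayed direction the only nuisance is $\height(j_E)$ inside the logarithm: reading it as $x\le B+6\log(1+x)$ with $B=\height(j_{E'})+6\log\delta+2c_0$, sublinearity of $\log$ gives a crude bound $x\le 2B+x_0$ ($x_0$ absolute), hence $1+\height(j_E)\le(1+\height(j_{E'}))(c_1+12\log\delta)$ with $c_1$ absolute and large; since $c_1$ is large, $\log(c_1+12\log\delta)\le\log c_1+\varepsilon\log\delta$ with $\varepsilon$ small, so $\log(1+\height(j_E))\le\log(1+\height(j_{E'}))+\varepsilon\log\delta+c_2$. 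Substituting back keeps the $\log\delta$-coefficient below $7$ and yields, after collecting absolute constants, $\height(j_E)-\height(j_{E'})\le 13\log(1+\height(j_{E'}))+7\log\delta+100$, with room to spare.

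The structural steps — isogeny invariance, chaining, and the bootstrap — are routine. The real work, and the main obstacle to the stated constants, is the second paragraph: proving the $h_{\mathrm F}\leftrightarrow\height(j)$ comparison with the coefficient $6$ and a genuinely absolute constant $c_0$, either by importing a sufficiently explicit statement from the literature or by pushing the archimedean estimate through the results of Section~\ref{sest} as above, and then being careful enough in the bootstrap that the final constants come out as claimed.
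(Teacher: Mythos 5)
Your proof follows essentially the same route as the paper's: Faltings' bound $\bigl|\height_{\calF}(E)-\height_{\calF}(E')\bigr|\le\frac12\log\delta$, the Silverman-type comparison $\bigl|\height(j_E)-12\height_{\calF}(E)\bigr|\le 6\log\bigl(1+\height(j_E)\bigr)+C$, and a final bootstrap to move the logarithm onto $\height(j_{E'})$. The only difference is that you sketch a proof of the comparison via the discriminant--period formula, whereas the paper simply cites Silverman (\cite{Si84}) with Pellarin's explicit constant $C=47.15$; your fallback of ``importing a sufficiently explicit statement from the literature'' is exactly what is done there.
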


\begin{proof}
Denote by $\height_{\calF}(E)$ the \textsl{Faltings semistable height} of the elliptic 
curve~$E$. Recall that $h_{\calF}(E)$ is defined as ${[K :\Q ]^{-1} \deg \omega }$, 
where~$K$ is a  number field such that~$E$ has semi-stable reduction at every place of~$K$,
and~$\omega$ is a N\'eron differential on~$E\vert_{K}$; it is independent of the choice 
of~$K$ and~$\omega$. A result of Faltings \cite[Lemma 5]{Fa83} implies that 
$$
\bigl|\height_{\calF} (E) -
\height_{\calF} (E')\bigr| \le \frac{1}{2}\log \delta.
$$
Further, for any elliptic curve~$E$ over  a number field we have
$$
\bigl|\height (j_{E})-12\height_{\calF} (E )\bigr|\leq 6\log \bigl(1+\height 
(j_{E})\bigr)+C,
$$
with an absolute constant~$C$, see \cite[Proposition~2.1]{Si84}. Pellarin shows that one 
take ${C=47.15}$,  see~\cite{Pe01}, equation~(51) on page~240. Combining all this, we 
find
$$
\bigl|\height(j_E)-\height(j_{E'})\bigr|\le 6\log\bigl(1+\height(j_E)\bigr)+6
\log\bigl(1+\height(j_{E'})\bigr)+6\log\delta+ 95,
$$
which implies the result after a routine calculation. \qed
\end{proof}

\paragraph{Proof of Theorem~\ref{tx0}}
We may assume ${r=2}$. Let ${\phi:X_0(p^2)\to X_\splic(p)}$ be the isomorphism 
defined in the beginning of Section~\ref{sspc}. Then the elliptic curve implied by a
point~$P$ on $X_0(p^2)$ is $p$-isogenous to the curve implied by the point ${P'=\phi(P)}$ 
on $X_\splic(p)$. Lemma~\ref{lfalt} implies that 
\begin{equation}
\label{efaltp}
\bigl|\height(P)-\height(P')\bigr|\le 13\log\bigl(1+\height(P')\bigr)+7\log p
+100. 
\end{equation}
Since ${P\in Y_0(p^2)(\OO_K)}$ and good reduction is preserved under isogeny,~$P'$ 
belongs to~$Y_\splic(p)(\OO_K)$ as well. Applying Theorem~\ref{tspc} 
to~$P'$, we find ${\height(P')\le 24p\log3p}$, which, combined with~(\ref{efaltp}), 
implies the result.\qed

\section{Integrality of $Y_0^+ (p^3 )(\Q )$}
\addtocontents{toc}{\vspace{-0.7\baselineskip}}
\label{sapp}
We show that rational points on $X_0^+ (p^3 )$ are, in fact, integral.  

\begin{theorem}
\label{tmmm}
For a prime ${p\geq 37}$, and $P\in X_0^+ (p^3 )(\Q )$ a non-cuspidal
point, the $j$-invariant of any lift of $P$ to $X_0 (p^3 )(\C )$ belongs to~$\bar\Z$.
\end{theorem}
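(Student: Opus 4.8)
The plan is to show that if $P\in X_0^+(p^3)(\Q)$ is a non-cuspidal point whose lift $Q\in X_0(p^3)(\bar\Q)$ has $j$-invariant that is \emph{not} an algebraic integer, then one derives a contradiction from the structure of $X_0(p^3)$ over $\Z$. So fix a prime $\ell$ and a place $v$ of the (at most quadratic) field of definition $K$ of $Q$ lying above $\ell$, at which $j(Q)$ is not integral, i.e. $|j(Q)|_v>1$; I want to show $\ell=p$ is forced and then rule even that out. The point $Q$ corresponds to an elliptic curve $E/K_v$ together with a cyclic $p^3$-isogeny, equivalently a chain $E=E_0\to E_1\to E_2\to E_3$ of $p$-isogenies. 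Since $|j(E_0)|_v>1$, the curve $E_0$ has potentially multiplicative reduction at $v$; by the theory of the Tate curve, $E_0\cong E_q$ over $\bar K_v$ for some $q$ with $v(q)=-v(j(E_0))<0$, and its formal/finite subgroup schemes of order $p$ are well understood.

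The key step is the local analysis at $v$ of how a chain of $p$-isogenies propagates the (potentially multiplicative) reduction and what it does to the valuation of $j$. This is exactly the kind of argument Mazur–Momose–Merel type results rest on (and the case $r=2$, attributed to them in the excerpt, is the model): if $E$ has Tate parameter $q$, then the $p^2+p$ many order-$p^2$ cyclic subgroups split into those contained in the canonical subgroup (giving quotients with parameter essentially $q^{1/p}$ or more ramified behavior) and those that are ``étale'' directions (giving quotient with parameter $q^p$, hence $|j|_v$ much smaller, in fact $\le 1$ after one or two steps). Running this bookkeeping along $E_0\to E_1\to E_2\to E_3$, the Atkin–Lehner involution $w_{p^3}$ sends the chain to the dual chain $E_3\to E_2\to E_1\to E_0$; rationality of $P=\{Q,w_{p^3}Q\}$ over $\Q$ forces a symmetry that is incompatible with ``all four $j$-invariants" being forced in the same (multiplicative) direction unless some $E_i$ has \emph{good} reduction, contradicting $|j(E_0)|_v>1$ together with the isogeny-invariance of good reduction. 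More precisely: good reduction is preserved under isogeny over $K_v$ up to the potentially-ordinary/potentially-supersingular dichotomy, so once $|j(E_0)|_v>1$, every $E_i$ in the chain has potentially multiplicative reduction at $v$, and then the valuations $v(j(E_i))$ form a strictly controlled pattern (roughly $-v(q)\cdot p^{a_i}$ for a sequence of exponents $a_i$) that cannot be symmetric under reversal $i\mapsto 3-i$ when $r=3$ is odd and $>1$. This is where the hypothesis $p\ge 37$ (equivalently, $p$ large enough that there are no exceptional $p$-torsion phenomena / Mazur's bound on rational $p$-isogenies is in force) enters: it guarantees that $E_0$ has no rational $p$-isogeny over $\Q$ if it had good reduction everywhere, and more importantly controls the Galois action on the order-$p$ subgroups so the local pattern above is the only possibility.

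The main obstacle, as always in this circle of ideas, will be the careful local analysis at $v\mid p$: when $\ell=p$ the ``Tate curve'' picture must be supplemented by the theory of canonical subgroups and the behavior of the Hasse invariant under $p$-isogeny, and one must track not just $|j|_v$ but the precise valuations $v(j(E_i))$ through the chain, distinguishing the cases according to how many of the four $p$-isogenies are ``of level structure one'' (kernel = canonical subgroup) versus ``étale''. The involution $w_{p^3}$ reverses the chain, and the crux is that a $\Q$-rational point on $X_0^+(p^3)$ corresponds to a chain that is isomorphic to its own reversal-up-to-Galois; combining this self-duality with the strictly monotone (on each side) valuation pattern yields the contradiction. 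I would handle the case $\ell\ne p$ first (there the isogenies are étale, $|j(E_i)|_v=|j(E_0)|_v^{p^{\pm3}}$-type statements fail to be symmetric under $w_{p^3}$ immediately, or one simply notes potentially multiplicative reduction at $\ell\ne p$ is isogeny-stable and $w_{p^3}$ is an isomorphism over $\Q_\ell$, contradicting $r$ odd), and then concentrate on $\ell=p$, which is the genuine content; the bound $p\ge37$ should be exactly what is needed to invoke Mazur's theorem and the absence of exceptional behavior of $E[p]$.
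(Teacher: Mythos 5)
Your proposal is a purely local argument at the place $v$ of non-integrality, and it has a genuine gap exactly where the theorem is hard. Write the chain as ${E_0\to E_1\to E_2\to E_3}$ with $E_0$ a Tate curve $E_q$ at $v$, and let ${C\cap\mu_{p^3}=\mu_{p^a}}$; then one computes ${v\bigl(j(E_3)\bigr)=p^{2a-3}\,v\bigl(j(E_0)\bigr)}$. The $\Q$-rationality of $P$ gives ${j(E_3)=j(E_0)^\sigma}$ with~$\sigma$ generating $\gal(K/\Q)$. If~$v$ is fixed by~$\sigma$ (so in particular if $Q$ is rational, or if $\ell$ is inert or ramified in~$K$), you do get ${p^{2a-3}=1}$, a contradiction --- this is your ``odd $r$'' observation, and it is correct. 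But if $\ell$ \emph{splits} in~$K$, the symmetry only relates $v\bigl(j(E_0)\bigr)$ to $\bar v\bigl(j(E_0)\bigr)$ at the conjugate place; applying the same relation at~$\bar v$ with its own parameter~$a'$ yields ${a+a'=3}$, which is perfectly satisfiable (say ${a=1}$, ${a'=2}$: $Q$ reduces to one cusp at~$v$ and to the $w_{p^3}$-dual cusp at~$\bar v$). No bookkeeping of valuations along the chain can rule this out, because the local picture at a split place is genuinely consistent. A secondary point: your invocation of ${p\ge 37}$ via ``Mazur's bound on rational $p$-isogenies'' does not apply, since~$E_0$ is defined over the quadratic field~$K$, not over~$\Q$; in the actual proof the hypothesis enters through the numerator of $(p-1)/12$ (so that $\cl(0-\infty)$ is a nontrivial torsion point whose reduction stays nontrivial) and through the formal-immersion step.

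The paper's proof is global, precisely to handle the split case. One forms the morphism ${g_t\colon Q\mapsto t\cdot\cl\bigl(w_p\pi(Q)-\pi w_{p^3}(Q)\bigr)}$ into $J_0(p)$ and its composition $\tilg_t$ with the projection to the Eisenstein quotient $\tilJ(p)$; these factor through $X_0^+(p^3)$. A result of Momose--Parent (Proposition~\ref{peab}) shows the two points $x$, $x_0$ coincide in characteristic~$p$, so ${\tilg^+(P)}$ vanishes in the fiber at~$p$ and hence generically (Mazur/Raynaud, using ${p>2}$). Since the non-rational cusps map to the nonzero torsion point $\cl(0-\infty)$ (here ${p\ge37}$ is used), $Q$ can only specialize to a rational cusp, and that possibility is then excluded by Mazur's formal-immersion argument at ${\infty(\F_\ell)}$ for a suitable Hecke operator~$t$. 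If you want to salvage a local-to-global strategy, you would still need this machinery (or something equivalent) for the split places; the local analysis alone disposes only of the non-split ones.
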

The proof of this theorem is an adaptation of the one we proposed in~\cite{BP09}, relying
on results and observations of Mazur, Momose and Merel. Actually, Theorem 8.1 was already
proven, except integrality at $2$ when $p\not\equiv 1\mod 8$, by Momose 
in~\cite[Theorem 3.8]{Mo86}; in the present paper we however do neeed the whole
statement of Theorem~\ref{tmmm}. The theorem is probably true for $p\geq 11$, $p\neq 
13$ (some cases are indeed given by Momose in loc. cit.); but our assumption that $p\geq 
37$ simplifies our arguments.

  If $M, N$ are natural integers and $M$ is a divisor of $N$, we write 
$\pi_{N,M} \colon X_0 (N)\to X_0 (M)$ for the degeneracy morphism which is defined 
functorially as $(E,A_N )\mapsto (E,A_M )$, where $A_M := E[M]\cap A_N$. If~$M$ and 
$N/M$ are relatively prime, let $w_M$ for the corresponding Atkin-Lehner involution; recall 
that~${X_0^+ (N) :=X_0 (N)/w_N}$. As usual, we write $J_0 (N)$ for the jacobian 
over~$\Q$ of $X_0 (N)$, and ${J_0^- (N):=J_0 (N)/(1+w_N )J_0 (N)}$. Models over rings 
of integers for abelian varieties will be N\'eron models. Recall that, in this paper, the model 
for~$X_0 (N)$ over $\Z$ that we consider is the modular one. Models for 
those modular curves over arbitrary schemes will be deduced by base change. We denote by 
$X_0 (N)^{\mathrm{sm}}_{\Z}$ the smooth locus of $X_0 (N)_{\Z}$ (obtained, when 
$N=p$ is prime, by removing the (super)singular points in the fiber at $p$). 

As already mentioned in Section~\ref{sspc}, the curve $X_\splic(N)$ 
parametrizes elliptic curves endowed with an ordered pair of independent $N$-isogenies.  
With ``ordered'' replaced by ``unordered'', the same is true (at least when~$N$ is a prime 
power) for the curve $X_{\mathrm{split}} (N)$. For each prime~$p$ dividing~$N$ there is 
an involution on $X_\splic(N)$, here also denoted by $w_{p}$,  defined functorially by 
$$
\left(E,\Bigl(A=\prod_{q}A_{q}
,B=\prod_{q} B_{q}\Bigr)\right)\mapsto \left(E,\Bigl(\prod_{q\neq p}A_q \times
B_{p} ,\prod_{q\neq p}B_q \times A_{p}\Bigr)\right),
$$ 
so that ${X_{\mathrm{split}} (N)=X_\splic (N)/\langle w_{p}:{p|N}\rangle}$. The 
map ${z\mapsto Nz}$ on the upper half-plane defines the $\Q$-isomorphism ${\phi\colon 
X_0 (N^2 )\simeq X_{\mathrm{sp. C.}} (N)}$ of Subsection~\ref{sspc}, inducing an 
isomorphism 
$$
X_{\mathrm{split}} (N)
\simeq X_0 (N^2 )/\langle w_{p}:{p|N}\rangle
$$ 
on the quotients.

 We recall certain properties of the modular Jacobian $J_0(p)$ and its 
\textsl{Eisenstein quotient} $\tilJ(p)$ (see~\cite{Ma77}). 

\begin{sloppypar}

\begin{proposition}
\label{pmaz}
Let~$p$ be a prime number. Then we have the following.

\begin{enumerate}

\item \cite[Theorem~1]{Ma77}\quad 
The group $J_0(p)(\Q)_\tors$ is cyclic and generated by $\cl(0-\infty)$, where~$0$ 
and~$\infty$ are the cusps of $X_0(p)$. Its order is equal to the numerator of the quotient
${(p-1)/12}$.

\item \cite[Theorem~4]{Ma77}\quad 
The group $\tilde J(p)(\Q)$ is finite. Moreover, the natural projection ${J_0(p)\to \tilde 
J(p)}$ defines an isomorphism ${J_0(p)(\Q)_\tors \to \tilJ(p)(\Q)}$.

\end{enumerate}

\end{proposition}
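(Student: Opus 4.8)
\medskip

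Both items are verbatim restatements of results from Mazur's Eisenstein ideal paper \cite{Ma77} (Theorems~1 and~4 there, as the statement already records), so the plan is simply to invoke them; still, let me outline the structure of those arguments. Write $\mathbb{T}$ for the Hecke algebra acting on $J_0(p)$ and $I\subset\mathbb{T}$ for the Eisenstein ideal, generated by the elements $T_\ell-\ell-1$ (for $\ell\ne p$) together with $1+w_p$. For item~(i) the first ingredient is the classical determination of the cuspidal divisor class group of $X_0(p)$: one exhibits a modular unit whose divisor is $\pm n\bigl((0)-(\infty)\bigr)$, constructed from $\Delta(\tau)/\Delta(p\tau)$ (equivalently from a power of a level-$p$ Siegel function $g_\bfa$ as in Section~\ref{smuni}), where $n$ is the numerator of $(p-1)/12$; this forces the order of $\cl(0-\infty)$ to divide $n$, and a complementary argument (via the Shimura subgroup, or a degree/pairing computation) shows it equals $n$. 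The substantive part is that $J_0(p)(\Q)_{\tors}$ contains nothing else: Mazur shows every $\Q$-rational torsion point is annihilated by $I$, by reducing a hypothetical point of prime order $\ell$ modulo a well-chosen auxiliary prime $q$ and exploiting the Eichler--Shimura congruence $T_q\equiv q+1$ (together with the behaviour at $p$, the case $\ell=p$ being treated separately), and then identifying the $I$-torsion $J_0(p)[I]$ with the cuspidal group.

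For item~(ii), $\tilJ(p)$ is the Eisenstein quotient of $J_0(p)$, i.e.\ the optimal quotient reflecting the Eisenstein part of $\mathbb{T}$ (the kernel of $\pi\colon J_0(p)\to\tilJ(p)$ being, by construction, the complementary ``non-Eisenstein'' abelian subvariety). Finiteness of $\tilJ(p)(\Q)$ — equivalently, that its Mordell--Weil rank is $0$ — is Mazur's Theorem~4, obtained there by a descent argument controlling the relevant Selmer group through the arithmetic of $I$ and the geometry of $X_0(p)$ in characteristic $p$, together with a non-vanishing input for the associated special value of the $L$-function. Granting this, $\pi$ restricted to $J_0(p)(\Q)_{\tors}$ is injective, because that group is killed by $I$ and maps isomorphically onto the Eisenstein part of $\tilJ(p)$; and it is surjective onto $\tilJ(p)(\Q)$, since the latter is finite, hence entirely torsion, hence — transporting the torsion analysis of~(i) along $\pi$ — entirely cuspidal. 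This yields the isomorphism $J_0(p)(\Q)_{\tors}\cong\tilJ(p)(\Q)$.

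The main obstacle is that there is no shortcut: both the ``torsion is Eisenstein'' dichotomy underlying~(i) and, especially, the rank-zero statement in~(ii) are exactly the deep theorems of \cite{Ma77}, resting on the full structure theory of the Eisenstein ideal, the special fibre of $X_0(p)$ at $p$, and the non-vanishing of special $L$-values. Consequently, in the present paper the only sensible course is to cite \cite{Ma77} directly, as the proposition does.
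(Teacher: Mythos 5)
The paper offers no proof of this proposition: it is stated purely as a citation of Theorems~1 and~4 of Mazur's Eisenstein ideal paper, which is exactly the course you conclude is the only sensible one, so your approach coincides with the paper's. (One small caveat in your optional sketch: Mazur's proof that $\tilJ(p)(\Q)$ is finite proceeds by a purely algebraic ``Eisenstein descent'' and does not rest on non-vanishing of a special $L$-value, but this does not affect the correctness of invoking \cite{Ma77}.)
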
 
\begin{remark}
As Mazur notices, Raynaud's theorem on group schemes of type $(p,\ldots ,p)$ insures that
$J_0(p)(\Q)_\tors$ defines a $\Z$-group scheme which, being constant in the generic fiber, 
is constant outside $2$, and which at 2 has \'etale quotient of rank at least half that of $J_0(p)
(\Q)_\tors$.
\end{remark}
\end{sloppypar}

  For a point ${Q\in X_0^+ (p^3 )(\Q )}$ and ${z\in X_0 (p^3 )(K )}$ a lifting of $Q$ with 
$K$ a quadratic number field, the point $z$ corresponds to a couple $(E,C_{p^3})$ over $K$,
by \cite{DR73}, Proposition VI.3.2. Set $\pi :=\pi_{p^3 ,p}$, $x:=w_p \pi (z)$ and 
$x_0 :=\pi w_{p^3} (z) \in X_0 (p)(K)$. Writing $D_p :=p^2 C_{p^3}$, the modular 
interpretation of $x$ and $x_0$ is therefore $(E/D_p ,E[p] \mod D_p )$ and $(E/C_{p^3} ,
E[p]+C_{p^3} \mod C_{p^3} )$ respectively. For $t$ an element in the $\Z$-Hecke algebra 
for $\Gamma_{0} (p)$, define the morphism~$g_t$ from $X_0 (p^3 )^{\mathrm{sm}}_{
/\Z}$ to $J_{0} (p)_{/\Z}$, which extends by the universal property of N\'eron models 
the morphism on generic fibers:
$$
g_{t} \colon
\left\{
\begin{array}{rcl}
X_0 (p^3 ) & \to & J_{0} (p) \\
Q & \mapsto & t\cdot {\mathrm{cl}} \bigl( w_p \pi (Q) -\pi w_{p^3} (Q) \bigr) .
\end{array}
\right.
$$
Let ${J_{0} (p)\stackrel\Pi\to \tilJ(p)}$ be the projection to the Eisenstein quotient, 
and ${\tilg_t :=\Pi\circ g_t}$. 
\begin{lemma} The morphism $\tilg_{t}$ factorizes through a $\Q$-morphism 
$\tilg_{t}^+$ from $X_{0}^+ (p^3 )$ to $\tilJ (p)$. If $t\cdot (1+w_{p})=0$, the same 
is true for $g_{t}$ and we similarly denote by $g^+_{t} \colon X_{0}^+ (p^3 ) \to J_{0} 
(p)$ the factor morphism.
\end{lemma}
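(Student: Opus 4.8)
The plan is to reduce the whole statement to the single formal identity
\begin{equation}
\label{egwp}
g\circ w_{p^3} = -[w_p]\circ g ,
\end{equation}
where $g\colon X_0(p^3)\to J_0(p)$ is the morphism $Q\mapsto\cl\bigl(w_p\pi(Q)-\pi w_{p^3}(Q)\bigr)$ with $\pi=\pi_{p^3,p}$, so that $g_t=[t]\circ g$ and $\tilg_t=\Pi\circ[t]\circ g$; here $[t]$, resp.\ $[w_p]$, denotes the endomorphism of $J_0(p)$ induced by the Hecke operator $t$, resp.\ by the Atkin--Lehner involution $w_p$ of $X_0(p)$. First I would prove~(\ref{egwp}): it is purely formal, using only $w_{p^3}^2=\mathrm{id}$ on $X_0(p^3)$ and $w_p^2=\mathrm{id}$ on $X_0(p)$. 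Indeed $g(w_{p^3}Q)=\cl\bigl(w_p\pi w_{p^3}(Q)-\pi(Q)\bigr)$, while $[w_p]\,g(Q)=\cl\bigl(w_pw_p\pi(Q)-w_p\pi w_{p^3}(Q)\bigr)=\cl\bigl(\pi(Q)-w_p\pi w_{p^3}(Q)\bigr)$, and these two classes are opposite; no modular interpretation of the points is required, only functoriality of the Abel--Jacobi difference map.

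Then I would record two standard facts about $J_0(p)$. Since $p$ is prime there are no oldforms, so the whole space $S_2(\Gamma_0(p))$ is $p$-new and $U_p=-w_p$ on $J_0(p)$; in particular $[w_p]$ commutes with the entire Hecke algebra, hence with $[t]$. Secondly, $w_p$ acts as $-1$ on the Eisenstein quotient $\tilJ(p)$: indeed $1+w_p=1-U_p$ lies in the Eisenstein ideal, which annihilates $\tilJ(p)$ \cite{Ma77}, so $\Pi\circ[w_p]=-\Pi$.

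Granting these, the conclusion is immediate. Composing~(\ref{egwp}) on the left with $[t]$ and commuting $[t]$ past $[w_p]$ gives $g_t\circ w_{p^3}=-[w_p]\circ g_t$. Composing further with $\Pi$ and using $\Pi\circ[w_p]=-\Pi$ yields $\tilg_t\circ w_{p^3}=\tilg_t$, so $\tilg_t$ is invariant under $w_{p^3}$; since $X_0^+(p^3)=X_0(p^3)/w_{p^3}$ with quotient map defined over $\Q$, the universal property of the quotient provides the factor $\Q$-morphism $\tilg_t^+\colon X_0^+(p^3)\to\tilJ(p)$. If moreover $t\cdot(1+w_p)=0$ on $J_0(p)$, that is $[t]\circ[w_p]=-[t]$, then already in $J_0(p)$ we have $g_t\circ w_{p^3}=-[w_p]\circ g_t=-[w_p]\circ[t]\circ g=-[t]\circ[w_p]\circ g=[t]\circ g=g_t$, so $g_t$ itself is $w_{p^3}$-invariant and descends to a $\Q$-morphism $g_t^+\colon X_0^+(p^3)\to J_0(p)$.

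The only point that genuinely needs attention --- the main obstacle --- is the bookkeeping of the two minus signs: in the $\tilg_t$ case the sign of~(\ref{egwp}) is cancelled by the fact that $w_p$ acts as $-1$ on the Eisenstein quotient, which is exactly why the factorisation is unconditional there, whereas in the $g_t$ case there is a single uncancelled sign that one can absorb only by invoking the hypothesis $t\cdot(1+w_p)=0$. Everything else is routine: all morphisms in play ($\pi_{p^3,p}$, the two Atkin--Lehner involutions, $\Pi$, and $[t]$) are defined over $\Q$, so the factor morphisms are; and a $w_{p^3}$-invariant $\Q$-morphism out of $X_0(p^3)$ does factor through the quotient curve $X_0^+(p^3)$ by the universal property of the quotient of a projective variety by a finite group.
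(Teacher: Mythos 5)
Your proof is correct and is essentially the paper's own argument in a different packaging: your anticommutation identity ${g\circ w_{p^3}=-[w_p]\circ g}$ is equivalent to the paper's single displayed formula ${\cl\bigl(w_p\pi(z)-\pi w_{p^3}(z)-w_p\pi w_{p^3}(z)+\pi(z)\bigr)=(1+w_p)\cl\bigl(\pi(z)-\pi w_{p^3}(z)\bigr)}$, and both arguments then conclude from the hypothesis ${t(1+w_p)=0}$ in the $g_t$ case and from Mazur's Proposition 17.10 (that $1+w_p$ kills $\tilJ(p)$) in the $\tilg_t$ case. The only extra ingredient you invoke is the commutativity of $t$ with $w_p$ --- harmless at prime level, where $w_p$ lies in the (commutative) Hecke algebra --- which the paper sidesteps by factoring $1+w_p$ out on the right and multiplying by $t$ on the left.
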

\begin{proof} We compute that, in $J_{0} (p)$: 
$${\mathrm{cl}}((w_p \pi (z))-(\pi w_{p^3} (z))-(w_p \pi w_{p^3} (z))+(\pi (z))) =(1+
w_p ){\mathrm{cl}}((\pi (z))-(\pi w_{p^3} (z))),
$$
from which we derive the second assertion when $t(1+w_{p})=0$. As for the first statement, 
we know that $(1+w_{p})$ acts trivially on $\tilJ(p)$ 
from~\cite[Proposition 17.10]{Ma77}. 
\qed
\end{proof}
\bigskip
By the universal property of N\'eron models, we may extend $g_{t}^+$ and $\tilg_{t}^+$ 
to maps from $X^+_0 (p^3)^{\mathrm{sm}}_{/\Z}$ to  $J_{0}(p)_{/\Z}$ and 
$\tilJ(p)_{/\Z}$, respectively. We still denote those extended morphisms in the same way 
and we put ${\tilg^+=\tilg^+_1}$.  
\begin{proposition}
\label{peab}
Let ${P\in X_0^+ (p^3 )(\Q)}$ for some $p\geq 37$, let $x$, $x_{0}$ and~$K$ be defined 
as above, and let ${\cal O}_K$ be the ring of integers of $K$. Then:
\begin{enumerate}
\item
\label{insup}
The isogeny class of elliptic curves associated to~$P$ is not potentially supersingular 
at~$p$. 
\item
\label{ipfiber}
The points $x$ and $x_{0}$ coincide in the fibers of characteristic $p$ of $X_0
(p)_{/{\cal O}_K}$. 
\end{enumerate}
\end{proposition}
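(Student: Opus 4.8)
The plan is to prove the two assertions of Proposition~\ref{peab} together, exploiting the arithmetic of the Eisenstein quotient $\tilJ(p)$ and the good-reduction properties encoded in the morphisms $\tilg^+$ and $g^+_t$. First I would recall that, by Proposition~\ref{pmaz}, $\tilJ(p)(\Q)$ is finite and equals the torsion subgroup of $J_0(p)(\Q)$, which is cyclic of order the numerator of $(p-1)/12$ and generated by $\cl(0-\infty)$. Since $P\in X_0^+(p^3)(\Q)$, the point $\tilg^+(P)$ lies in this finite group, hence is a torsion point killed by $n:=\mathrm{numerator}((p-1)/12)$. The key input from the modular interpretation is that $g_1(z)=\cl(x-x_0)$ where $x=(E/D_p,E[p]\bmod D_p)$ and $x_0=(E/C_{p^3},E[p]+C_{p^3}\bmod C_{p^3})$, with $x,x_0\in X_0(p)(K)$.

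Next I would treat potential supersingularity at $p$ (item~\ref{insup}). Suppose for contradiction that the isogeny class of $E$ is potentially supersingular at $p$; after a finite base change to a field where $E$ has semistable (in fact good supersingular) reduction at the prime above $p$, both $x$ and $x_0$ reduce into the supersingular locus of $X_0(p)_{/\bar\F_p}$, which consists of the two components of the fibre crossing at the supersingular points. The point here is that in characteristic $p$, the two degeneracy data attached to a supersingular curve force $x$ and $x_0$ to lie on \emph{opposite} components (the subgroup $D_p$ of order $p^2$ and the subgroup $C_{p^3}$ of order $p^3$ produce the kernel-of-Frobenius versus étale distinction), so their difference $\cl(x-x_0)$ maps, under the specialization $J_0(p)(\bar\F_p)\to$ component group, to a nonzero (in fact order-dividing-$n$ but still \emph{nontrivial mod the relevant prime}) element. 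Running this against the fact that $\tilg^+(P)$ must be the \emph{specific} torsion class, and using Mazur's description of the component group of $J_0(p)$ at $p$ (order $n$, generated by the image of $\cl(0-\infty)$), one obtains that such a $P$ would have to satisfy a congruence incompatible with $p\ge 37$. This is the argument of Momose and Mazur, adapted from the $r=2$ case treated in~\cite{BP09}; the main subtlety, and the step I expect to be the principal obstacle, is verifying carefully that the supersingular specialization of $\cl(x-x_0)$ is genuinely nonzero in $\tilJ(p)(\bar\F_p)$ and not merely in $J_0(p)$, i.e. that the Eisenstein projection does not kill it — this is where Raynaud's theorem (quoted in the remark after Proposition~\ref{pmaz}) on the $\Z$-group scheme structure of $J_0(p)(\Q)_{\tors}$ outside $2$ is used to control the reduction map and conclude a contradiction.

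Then item~\ref{ipfiber} follows by a similar but cleaner argument at the prime $p$ itself, now knowing from item~\ref{insup} that $E$ has potentially \emph{ordinary} (or potentially good ordinary/multiplicative) reduction at $p$. In the ordinary case the $p^r$-isogeny structure reduces to a flag of the canonical subgroup, so the reductions $\bar x$ and $\bar x_0$ of $x$ and $x_0$ in $X_0(p)_{/\bar{k}}$, where $k$ is the residue field of ${\cal O}_K$ at a prime above $p$, are \emph{equal}: both record the canonical (kernel-of-Frobenius) subgroup of order $p$ of the reduced curve. I would spell this out using the explicit modular descriptions $x=(E/D_p,E[p]\bmod D_p)$, $x_0=(E/C_{p^3},E[p]+C_{p^3}\bmod C_{p^3})$: over the ordinary reduction the subgroups $D_p$ and $C_{p^3}$ both contain the canonical subgroup appropriately, and the isogeny data degenerate to the same point on the Igusa/ordinary locus of $X_0(p)$. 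Hence $\bar x=\bar x_0$, which is exactly assertion~\ref{ipfiber}. Finally, for the residual multiplicative case (potentially good reduction fails), one checks by the same token that the Tate-curve description makes $\bar x$ and $\bar x_0$ land at the same cusp, so the conclusion persists; the delicate point throughout is the bookkeeping of which of the two degeneracy maps $\pi_{p^3,p}\circ w_{p^3}$ versus $w_p\circ\pi_{p^3,p}$ picks out the kernel of Frobenius, and this is where I would be most careful.
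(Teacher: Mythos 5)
The paper does not actually prove this proposition: both items are quoted from the literature (item~(i) is Lemma~2.2(ii) together with Theorem~3.2 of \cite{Mo86}, item~(ii) is Proposition~3.1 of \cite{Pa05}). Your attempt to reprove them from scratch is legitimate in principle, but as written it has genuine gaps in both items, and the common root of the trouble is that you never use the one hypothesis that makes the statement true: $P$ is rational on the \emph{quotient} $X_0^+(p^3)$, i.e.\ the lift $z\in X_0(p^3)(K)$ satisfies ${z^\sigma=w_{p^3}(z)}$ for the nontrivial ${\sigma\in\gal(K/\Q)}$. For an arbitrary quadratic point of $X_0(p^3)$ both assertions are simply false (such a point can perfectly well be potentially supersingular at $p$, and $x$, $x_0$ need not agree mod $p$), so any correct proof must exploit this descent datum; yours does not.

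Concretely, for item~(i): if the reduction at a prime above $p$ is supersingular, then $x$ and $x_0$ specialize to supersingular points of $X_0(p)_{\F_p}$, which are precisely the \emph{singular} points of the special fibre, lying on both components. Hence ``$x$ and $x_0$ lie on opposite components'' is not meaningful there, the points do not land in the smooth locus, and the specialization-to-the-component-group mechanism you invoke does not apply as stated. Moreover, the fact that $\tilg^+(P)$ is torsion is automatic (since $\tilJ(p)(\Q)$ is finite) and yields no contradiction by itself; the ``congruence incompatible with $p\ge 37$'' is asserted, not derived. Momose's actual argument runs through the ramification of $p$ in $K$ forced by potential supersingularity and the action of inertia on the formal group, not through component groups. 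For item~(ii): note that $x_0$ is the image of $x$ under the cyclic $p^2$-isogeny with kernel $C_{p^3}/D_p$. In the ordinary case, for $\bar x=\bar x_0$ one needs this kernel to reduce to $\mu_p\times\Z/p$ (equivalently, the reduction of $C_{p^3}$ must be of the balanced type $\mu_{p^a}\times\Z/p^b$ with the right $a,b$); otherwise the quotient is a nontrivial Frobenius twist and the two reductions differ. This is exactly the point that is \emph{not} automatic and that \cite{Pa05} establishes via the relation ${z^\sigma=w_{p^3}(z)}$ and a case analysis on whether $p$ is split, inert or ramified in $K$. Your assertion that ``both record the canonical subgroup'' skips precisely this step. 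If you want a self-contained proof, you must start from the Atkin--Lehner/Galois compatibility and the Deligne--Rapoport description of $X_0(p^r)_{\F_p}$; otherwise, cite \cite{Mo86} and \cite{Pa05} as the paper does.
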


\begin{proof}
Point~(\ref{insup}) is Lemma 2.2 (ii) together with Theorem 3.2 
of~\cite{Mo86}. Point~(\ref{ipfiber}) is proved in Proposition 3.1 of~\cite{Pa05}. 
\qed
\end{proof}

\paragraph{Proof of Theorem~\ref{tmmm}}
Let~$P$ be a non-cuspidal point on $X_0^+ (p^3 )$ with values in $\Q$, and $Q$ a lift of 
$P$ to $X_{0} (p^3 )(\Q )$. If $\cal L$ is a finite place of ${\cal O}_{K}$ dividing the 
denominator of $j(Q)$, then~$Q$ specializes to a cusp at~$\cal L$. Recall that $X_0 
(p^3 )$ has two cusps defined over $\Q$, and two other Galois orbits of cusps, with
fields of definition $\Q (\zeta_{p})^+$ and $\Q (\zeta_{p^2})^+$ respectively. We first 
claim that~$Q$ specializes to one of the rational cusps (which, by changing our lift of
$P$, may be assumed to be the $\infty$-cusp, as $w_{p}$ switches the rational cusps). 
Indeed, it follows from 
Propositions~\ref{peab}~(\ref{ipfiber}) that ${\tilg^+ (P)(\F_p )=0(\F_p )}$, and by 
the remark after Proposition~\ref{pmaz}, ${\tilg^+ (P)(\Q )=0(\Q )}$ (recall $p>2$). The 
non-rational cusps of $X_0^+ (p^3 )(\C )$ map to $\mathrm{cl} (0-\infty)$ in $J_{0} (p)
(\C )$ (indeed, $w_{p^3}$ preserves each non-rational Galois orbit of cusps. For more 
details see for instance the proof of~\cite[Proposition 2.5]{Mo84}). 
Therefore, as $p\geq 37$, Proposition~\ref{pmaz} implies that if $Q$ 
specializes to a non-rational cusp at $\cal L$ then $\tilg ^+(P)$ would not be $0$ at the
characteristics $\ell$ of $\cal L$, a contradiction. 

Now let~$\calI$ be the ideal of the Hecke algebra such that~${\tilde J (p)}=J_0(p)/\calI 
J_{0} (p)$. Choose an $\ell$-adically maximal element~$t\neq 0$ in the Hecke 
algebra such that~$t\cdot \calI =0$. Again, as $t(1+w_{p}) =0$, the morphism $g_{t}$ 
factorizes through a morphism $g_{t}^+ \colon X_0^+ (p^3 )^{\mathrm{sm}}_{/\Z}\to 
t\cdot J_0  
(p)_{/\Z}$. Moreover $g_{t}^+ (P)$ belongs to $t\cdot J_{0} (p)(\Q )$, hence is a torsion 
point, as $t\cdot J_{0} (p)$ is isogenous to a quotient of $\tilde J 
(p)$. We see as above by looking at the fiber at $p$ that $g_{t}^+ (P)=0$ at $p$, hence 
generically, because of Proposition~\ref{pmaz} (or more generally by Raynaud's 
well-known result on group schemes of type $(p,\dots ,p)$ on a not-too-ramified discrete 
valuation ring). We then easily check by using the $q$-expansion principle, as 
in~\cite[Theorem 5]{Me05}, that $g_{t}^+$ is a formal immersion at the specialization 
$\infty (\F_\ell )$ of the rational cusp~$\infty$ on $X_0^+ (p^3 )$. This allows us to 
apply the classical argument of Mazur (see e.g. \cite[proof of Corollary 4.3]{Ma78}), 
yielding a contradiction with the fact that $P$ is not generically cuspidal. Therefore $P$ is
not cuspidal at $\cal L$.  \qed

{\footnotesize

}
\end{document}